\documentclass[10pt, letterpaper, notitlepage, oneside, fleqn]{article}

\usepackage{palatino,url}
\usepackage{graphicx}
\usepackage{amsmath}
\usepackage{amsfonts}
\usepackage{mathrsfs}
\usepackage{amsthm}
\usepackage{amssymb}
\usepackage[all]{xy}
\usepackage{verbatim}
\usepackage{fancyhdr}
\usepackage{adjustbox,lipsum}
\usepackage{color}
\usepackage{longtable}
\usepackage{bbm}
\usepackage{tikz}

\pagestyle{fancy}

\fancyhead[r]{R Jim\'enez Rolland \\ J C H Wilson}

\usepackage{ytableau}
\ytableausetup{mathmode, boxsize=2em}

 \addtolength{\textwidth}{3cm}
 \addtolength{\hoffset}{-1.5cm}


\fancyheadoffset{0cm}



\usepackage[vcentermath]{youngtab}
\newcommand{\Y}[1]{{\tiny\yng(#1)}}


\newcommand{\s}{\sigma}

\newcommand{\y}{\lambda}

\newcommand{\Ind}{\text{Ind}}

\newcommand{\Res}{\text{Res}}
\newcommand{\Hom}{\text{Hom}}

\newcommand{\End}{\text{End}}

\renewcommand{\dim}{\text{dim}}

\newcommand{\FIW}{\text{FI}_{\mathcal{W}}}
\newcommand{\FIBC}{\text{FI}_{BC}}

\newcommand{\FI}{\text{FI}}

\newcommand{\GL}{\text{GL}}
\newcommand{\SL}{\text{SL}}
\newcommand{\Sp}{\text{Sp}}
\newcommand{\SO}{\text{SO}}

\newcommand{\bn}{{\bf n}}
\newcommand{\bm}{ {\bf m}}

\newcommand{\W}{\mathcal{W}}

\newcommand{\oo}{\overline}

\newcommand{\C}{\mathbb{C}}

\newcommand{\Q}{\mathbb{Q}}

\newcommand{\Z}{\mathbb{Z}}
\newcommand{\F}{\mathbb{F}}

\renewcommand{\k}{\mathbbm{k}}

\newcommand{\G}{{\bf G}}
\newcommand{\B}{{\bf B}}
\newcommand{\Fr}{\text{Fr}}



\newtheorem{thm}{Theorem}[section]
\newtheorem{prop}[thm]{Proposition}
\newtheorem{lem}[thm]{Lemma}
\newtheorem{cor}[thm]{Corollary}

\theoremstyle{definition}
\newtheorem{defn}[thm]{Definition}
\newtheorem{rem}[thm]{Remark}
\newtheorem{example}[thm]{Example}

\newtheorem{notn}[thm]{Notation}

\theoremstyle{plain}

\date{\today}
\title{ Convergence criteria for $\FIW$--algebras and polynomial statistics on maximal tori in type B/C}

\author{Rita Jim\'enez Rolland\footnote{The first author is grateful for support from  the Sof\'ia Koval\'evskaia Foundation,  the Sociedad Matem\'atica Mexicana and PAPIIT-UNAM grant IA100816. $^\dagger$The second author is grateful for the support of an AMS-Simons Travel Grant.}  \;and Jennifer C. H. Wilson$^\dagger$}


\begin{document}


\setcounter{tocdepth}{2}

\maketitle

\begin{abstract} 

A result of Lehrer describes a beautiful relationship between topological and combinatorial data on certain families of varieties with actions of finite reflection groups. His formula relates the cohomology of complex varieties to point counts on associated varieties over finite fields.  Church, Ellenberg, and Farb use their \emph{representation stability} results on the cohomology of flag manifolds, together with classical results on the cohomology rings, to prove asymptotic stability for ``polynomial" statistics on associated varieties over finite fields. In this paper  we investigate the underlying algebraic structure of these families' cohomology rings that makes the formulas convergent. We prove that asymptotic stability holds in general for subquotients of  $\FIW$--algebras finitely generated in degree at most one, a result that is in a sense sharp.  As a consequence, we obtain convergence results for polynomial statistics on the set of maximal  tori in $\Sp_{2n}(\overline{\F_q})$ and $\SO_{2n+1}(\overline{\F_q})$ that are invariant under the Frobenius morphism. Our results also give a  new proof  of the stability theorem for invariant maximal tori in $\GL_n(\overline{\F_q})$ due to Church--Ellenberg--Farb.
\end{abstract}

\tableofcontents

\section{Introduction}

In this paper we study the complete complex flag varieties associated to the linear groups $\SO_{2n+1}(\C)$  and $\Sp_{2n}(\C)$ of type B and C, respectively. These spaces and their cohomology algebras are described in Section \ref{SectionMaxToriBackground}; their cohomology admits actions of the signed permutation group $B_n$.  A remarkable formula due to Lehrer  \cite{LEHRER_TORI} relates the representation theory of these cohomology groups with point-counts on varieties over $\F_q$ that parametrize maximal  tori in $\SO_{2n+1}(\overline{\F_q})$  and $\Sp_{2n}(\overline{\F_q})$. One result of this paper is to exhibit a form of convergence for these formulas as the parameter $n$ tends to infinity. 

 Church, Ellenberg, and Farb prove that the cohomology groups of the flag varieties  associated to $\GL_n(\C)$ are \emph{representation stable} with respect to the $S_n$--action \cite[Theorem 1.11]{CEF}. Using this result and a description of the cohomology due to Chevalley, they establish asymptotic stability for ``polynomial'' statistics on the set of Frobenius-stable maximal tori in $\GL_n(\overline{\F_q})$  \cite[Theorem 5.6]{CEFPointCounting}.  In Theorem \ref{ASYM_TORI} we prove the corresponding result in type B and C, using a representation stability result of the second author \cite{FIW1}.  
 
 To establish our results, we give general combinatorial criteria on the cohomology algebras of the complex varieties that ensure convergence. This is one of few results in the $\FI$--module literature that makes full use of the multiplicative structure on the $\FIW$--algebras. Church--Ellenberg--Farb prove convergence for statistics on maximal tori in type A using sophisticated results specific to these cohomology algebras. This paper shows that convergence in fact follows from much simpler features of the algebras.

 \begin{table}[h!]\label{SPECIFIC}	
\begin{footnotesize}
\caption{Some  statistics for   Frobenius-stable maximal tori of $\Sp_{2n}( \overline{\F_q})$ and $\SO_{2n+1}(\overline{\F_q})$}
\begin{longtable}{c  c  c  c   } 
	
	{\bf $\Fr_q$-stable maximal tori statistic} & {\bf Hyperoctahedral} & {\bf Formula in } &  {\bf Limit} \\
{\bf for $\mathbf{\Sp_{2n}( \overline{\F_q})}$ and $\mathbf{\SO_{2n+1}(\overline{\F_q})}$ } & {\bf character} & {\bf terms of $\mathbf{n}$}& {\bf  as  $\mathbf{n\to\infty}$} \\  \hline \\
		
Total number of    
  & $1$  & $q^{2n^2}$ (Steinberg)\\ 
		$\Fr_q$-stable maximal tori &  &  &  \\
		&&&\\
			Expected number of   & $X_1+Y_1$  & $ \displaystyle 1+\frac{1}{q^2}+\frac{1}{q^4}+\cdots \frac{1}{q^{2n-2}}$ & $  \displaystyle \frac{q^2}{(q^2-1)}  $\\ 
					 $1$-dimensional $\Fr_q$-stable subtori & & &  \\
	&&&\\
			
			Expected number of  split  & $X_1$  & $\displaystyle  \frac{1}{2}\left(1+\frac{1}{q}+\frac{1}{q^2}+\cdots \frac{1}{q^{2n-1}}\right) $  & $  \displaystyle   \frac{q}{2(q-1)}  $\\ 
					 $1$-dimensional $\Fr_q$-stable subtori & & &  \\
		&&&\\
		
		Expected value of reducible  &  $ \displaystyle  \binom{X_1+Y_1}{2}- (X_2+Y_2)$  &  $\displaystyle \frac{\left( q^4 - \frac{1}{q^{2n}} \right) \left( 1 - \frac{1}{q^{2(n-1)}} \right) }{\left(q^2-1 \right)\left(q^4 -1\right)} $&  $\displaystyle \frac{q^4}{(q^2-1)(q^4-1)}$ \\				 minus	irreducible $\Fr_q$-stable & &&\\ 		$2$-dimensional subtori & &&\\

&&&\\		
		Expected value of split minus & $ \displaystyle  X_2-Y_2$  &  $\displaystyle \frac{ q^2 \left(1-\frac1{q^{2n}}\right)\left(1-\frac{1}{q^{2(n-1)}}\right)}{2\left(q^4-1\right)}$&  $\displaystyle \frac{ q^2}{2\left(q^4-1\right)}$ \\ 
				 non-split $\Fr_q$-stable $2$-dimensional  &   & &   \\
		irreducible subtori &  &  & \\
\hline
	\end{longtable}
	\end{footnotesize}
	\end{table}

 Examples of limiting statistics computed in Section \ref{SectionConcreteStatistics} are presented in Table \ref{SPECIFIC}; the notation in this table is defined below. The first of these result  is due to Steinberg  \cite[Corollary 14.16]{STEIN}, and the other results appear to be new.

\subsubsection*{A general convergence result: algebras generated in low degree}

A primary goal of this paper is to describe the underlying algebraic structure on the cohomology algebras that drives these stability results. Throughout, we will use the notation $\W_n$ to denote either the family of symmetric groups $S_n$  (the Weyl groups in type A) or the family of hyperoctahedral groups $B_n$ (the Weyl groups in type B/C). We prove in Section  \ref{PROOFMAIN} that convergence of the statistics on maximal tori in type A and B/C holds because the graded pieces of the associated coinvariant algebras are, as a sequence of graded $\W_n$--representations, subquotients of what we call \emph{an $\FIW$--algebra finitely generated in degree at most 1}.  Terminology and basic theory of $\FIW$--modules is summarized in Section \ref{SubsectionBackgroundFIW}. Speaking informally, these $\FIW$--algebras are sequences of graded algebras with $\W_n$--actions that are generated by representations of $\W_0$ and $\W_1$. Concretely, they include the sequence of algebras $\Gamma_{\W_n}^*$  which we now define. 

Let $\k$ be a subfield of $\C$. Let $M({\bf 1})_n$ denote the $\k$--vector space with basis $x_1, x_2, \ldots, x_n$. The symmetric group $S_n$ acts on the variables $x_i$ by permuting the subscripts. Let $M({\bf 0})_n$ denote the $\k$--vector space generated by the single element $y$ with trivial $S_n$--action. Then for any integers $b, c \geq 0$ let $\Gamma^*_{S_n}$ be the graded algebra generated by $$ M({\bf 0})_n^{\oplus b} \oplus M({\bf 1})_n^{\oplus c} $$ with each copy of  $M({\bf 0})_n$ and $M({\bf 1})_n$  assigned a positive grading. The algebra $\Gamma^*_{S_n}$ may be taken to be commutative, anticommutative, or graded-commutative. It inherits a diagonal action of $S_n$ on its monomials. 

Analogously, let $M_{BC}({\bf 1})_n$ be the $\k$--vector space with basis $x_1, x_{\oo1}, x_2, x_{\oo2}, \ldots, x_n, x_{\oo{n}} $. The hyperoctahedral group $B_n$ acts on the variables $x_i$ by permuting and negating the subscripts. Again let $M_{BC}({\bf 0})_n$ be the $\k$--vector space generated by a variable $y$ with trivial $B_n$ action. Let $\Gamma^*_{B_n}$ be the symmetric, exterior, or graded-commutative algebra generated by $$ M_{BC}({\bf 0})_n^{\oplus b} \oplus M_{BC}({\bf 1})_n^{\oplus c} $$ with each family of variables graded in positive degree. 
 
Theorem \ref{LIMIT} shows the algebras $\Gamma^*_{\W_n}$ and their subquotients satisfy a convergence property with respect to certain family of `polynomial' class functions, first defined on the symmetric groups by MacDonald \cite[I.7 Example 14]{MacdonaldSymmetric} as follows. For any permutation $\sigma$ and positive integer $r$, let $X_r$ be the function that outputs  the number $X_r(\sigma)$  of $r$--cycles in the cycle type of $\sigma$. Polynomials in the functions $X_r$, called \emph{character polynomials}, play a major role in the theory of representation stability developed by Church--Ellenberg--Farb \cite{CEF}. In Section \ref{SubsectionCharacterPolys} we recall an analogous definition for the Weyl group $B_n$ in type B and C.  These \emph{hyperoctahedral character polynomials} are polynomials $$P \in \k[X_1, Y_1, X_2, Y_2, \ldots ]$$ in the  `signed cycle counting functions'  $X_r$ and $Y_r$ on $B_n$. See Section \ref{SubsectionRepTheoryBn} for the prerequisite background on the representation theory of the groups $B_n$ and a precise definition of these class functions. 

Theorem \ref{LIMIT} is a stability result for the asymptotics of the inner products $ \langle P_n, A^d_n \rangle_{\W_n}$ of a character polynomial $P$ with a subquotient $A^d_n$ of $\Gamma^d_{\W_n}$.  This is precisely the stability result needed to prove convergence of the point-counts that appear in Lehrer's formulas.  \\

\noindent {\bf Theorem \ref{LIMIT}} {\bf (Criteria for convergence).}  {\it Let $\W_n$ denote one of the families $S_n$ or $B_n$.  Let $\Gamma^*_{\W_n}$ be one of the commutative, exterior, or graded commutative algebras defined above, and let $\{A^d_n\}$ be any sequence of graded $\W_n$--representations such that $A^d_n$ is a subrepresentation of $\Gamma^d_{\W_n}$. 
Then for any $\W_n$ character polynomial $P$ and integer $q>1$ the following series converges absolutely
$$ \sum_{d=0}^{\infty}  \frac{ \lim_{n \to \infty} \langle P_n, A^d_n \rangle_{\W_n}}{q^d}. $$ 
\\}

\noindent The proof (given in Section \ref{PROOFMAIN}) uses an analysis of the combinatorics of coloured partitions. A closely related result in Type A is proved in \cite{FarbWolfson} using very different methods; see Remark \ref{RemarkFarbWolfson}. 

\subsubsection*{A failure of convergence}

In forthcoming work \cite{JIMWIL2} the authors show that, in contrast to Theorem \ref{LIMIT}, convergence may fail for $\FIW$--algebras generated in degree $2$ or greater. To give a concrete example, let $A^*_n$ be the polynomial algebra $\k[x_{1,2}, x_{1,3}, x_{2,3}, x_{1,4}, \ldots, x_{n-1, n} ]$ generated by commuting variables $x_{i,j} = x_{j,i}$,  $i \neq j$, in graded degree $1$, with an action of $\W_n$ by permuting the indices. Then the series $$\sum_{d=0}^{\infty}  \frac{ \lim_{n \to \infty} \langle P_n, A^d_n \rangle_{\W_n}}{q^d}$$ does not converge for any positive integer $q$ even for  the constant character polynomial $P=1$. This counterexample shows a fundamental difference in the asymptotic behaviour of $\FIW$--algebras generated by representations of  $\W_0$ or $\W_1$ and those generated by representations of $\W_n$ in degree $n \geq 2$. 

\subsubsection*{An application: polynomial statistics for coinvariant algebras and $\Fr_q$-stable maximal  tori }

 We apply  Theorem \ref{LIMIT} to the family $\{ R^d_n\}$, where $R^d_n$ denotes the   $d^{th}$-graded piece  of the complex coinvariant algebra $R^*_n$ in type A or in type  B/C. A description of these algebras appears in Section \ref{SubsectionCoinvariantAlg}. We obtain a convergent formula in Proposition \ref{CONVCOIN}  for coinvariant algebras of type B/C and give a new proof of the result in type A (see \cite[Theorem 5.6]{CEFPointCounting} and Remark \ref{COTYPEA}). Theorem \ref{LIMIT} shows that the convergence of this formula follows because the complex  coinvariant algebras are quotients  of the polynomial rings $\C[x_1, x_2, \ldots, x_n]$; convergence does not depend on any deeper structural features of these algebras. 

For a general connected reductive group $\G$ defined over $\F_q$ ,  let $\mathcal{T}^{\Fr_q}$  denote  the set of maximal tori of $\G(\overline{\F_q})$   that are stable under the action of $\Fr_q$, the Frobenius morphism.  $\Fr_q$-stable maximal tori of reductive groups $\G$ defined over $\F_q$ are fundamental to the study of the representation theory of the finite group $\G^{\Fr_q}$  \cite{DELUSZ}. More background is given in Section \ref{SectionMaxToriBackground}. Lehrer  \cite{LEHRER_TORI} obtained  remarkable formulas that relate functions that are defined on the set $\mathcal{T}^{\Fr_q}$ with the character theory of the Weyl group of $\G$.  We recall one of Lehrer's formulas in Theorem \ref{LEHFOR} for the cases when $\G$ is the symplectic group  $\Sp_{2n}$ or the special orthogonal group  $\SO_{2n+1}$; the formula connects functions on $\mathcal{T}^{\Fr_q}$ and  the cohomology of the generalized flag varieties associated to $\G(\C)$.

 In Section \ref{SubsectionToriPolys} we describe how the hyperoctahedral character polynomials of Section \ref{SubsectionCharacterPolys} define functions on $\mathcal{T}^{\Fr_q}$.  Specifically, for $T\in\mathcal{T}^{\Fr_q}$, $X_r(T) $ is the number of $r$-dimensional  $\Fr_q$-stable subtori of $T$ irreducible over $\F_q$ that split over $\F_{q^r}$, and $Y_r(T)$ is the number of $r$-dimensional $\Fr_q$-stable subtori of $T$ irreducible over $\F_q$ that do not split over $\F_{q^r}$. In Section \ref{MAXTORI},  we combine Lehrer's formula with Proposition \ref{CONVCOIN} to establish asymptotic results for these polynomial statistics  on the set of $\Fr_q$-stable maximal  tori of $\Sp_{2n}(\overline{\F_q})$ and of  $\SO_{2n+1}(\overline{\F_q})$.  We obtain Theorem \ref{ASYM_TORI} below, the type B and C analogues to  \cite[Theorem 5.6]{CEF} for $\Fr_q$-stable maximal tori for  $\GL_n(\overline{\F_q})$. \\

\noindent {\bf Theorem \ref{ASYM_TORI}.} {\bf (Stability of maximal tori statistics).} {\it
		Let $q$ be an integral power of a prime $p$. For $n\geq 1$, denote by $\mathcal{T}_n^{\Fr_q}$ the set of $\Fr_q$-stable maximal tori for either  $\Sp_{2n}(\overline{\F_p})$ or  $\SO_{2n+1}(\overline{\F_p})$. Let  $R^d_m$ denote the $d^{th}$-graded piece  of the complex coinvariant algebra $R^*_m$ in type B/C.
 If $P \in \C[X_1, Y_1, X_2, Y_2, \ldots ]$ is any hyperoctahedral character polynomial, then the normalized statistic  $$q^{-2n^2} \sum_{T \in \mathcal{T}_n^{\Fr_q}} P(T)$$ converges as $n\to\infty$. In fact,  $$ \lim_{n \to \infty} q^{-2n^2} \sum_{T \in \mathcal{T}_n^{\Fr_q}} P(T) \; = \; \sum_{d=0}^{\infty} \frac{\lim_{m \to \infty} \langle P_m, R^d_m \rangle_{B_m} }{q^d},$$
and the series in the right hand converges. \\ }

At the end of Section \ref{MAXTORI}, we use Theorem \ref{ASYM_TORI} and  Stembridge's formula for decomposing the $B_n$--representation $R^d_n$ (see Theorem \ref{ThmStembridge}) to  compute  the specific asymptotic counts in  Table \ref{SPECIFIC}.

\subsubsection*{Related work} Fulman \cite[Section 3]{FULMAN}  uses generating functions to recover the specific counts obtained by Church--Ellenberg--Farb \cite{CEFPointCounting}  for $\Fr_q$-stable maximal tori in $\GL_n(\overline{\F_q})$. Chen uses generating function techniques to obtain a linear recurrence on the stable twisted Betti numbers of the associated flag varieties\cite[Corollary 2 (II)]{WEIYAN}. These  methods  should also apply to study maximal tori of other classical groups, including the ones considered in this paper. They offer an alternate approach to performing the computations in Section \ref{SectionConcreteStatistics}, and may shed further light on the structure of the asymptotic formulas. 

Farb and Wolfson \cite{FarbWolfson}  extend the methods of Church--Ellenberg--Farb \cite{CEFPointCounting} to establish new algebro-geometric results. In \cite[Theorem B]{FarbWolfson} they prove  that configuration spaces of $n$ points in smooth varieties have \emph{convergent cohomology} \cite[Definition 3.1]{FarbWolfson}  and obtain arithmetic statistics for configuration spaces  over finite fields \cite[Theorem C]{FarbWolfson}. 

The results of this paper and the work of Church--Ellenberg--Farb \cite{CEFPointCounting} complement the existing literature on enumerative properties of matrix groups over finite fields; see for example  \cite{NP95, NP98, F99, W99, NP00,B,  FNP, LNP,NP,   NPP, NPS, BGPW, NPP14}. These authors seek, for example, to determine the proportion of elements in certain finite matrix groups that are \emph{cyclic}, \emph{separable}, \emph{semisimple}, or \emph{regular}. Their methods are different from those in this paper, and include the calculus of generating functions, complex analysis, classical Lie theory, finite group theory, analytic number theory, and statistics. This probabilistic approach to the study of finite groups has applications to algorithm design in group theory, random matrix theory, and combinatorics.

\subsubsection*{Acknowledgments} We would like to thank  Benson Farb for proposing this project. We thank Weiyan Chen, Tom Church, and Jason Fulman for useful conversations. We are grateful to  Ohad Feldheim for suggesting the proof of  Proposition \ref{TNCSubexponential}. We thank Jason Fulman for finding an error in a formula in Table \ref{SPECIFIC} in an early version of this paper. The first author would also like to thank the Centro de Ciencias Matem\'aticas, UNAM-Morelia, for all its support during the writing of this paper.

\section{Preliminaries}\label{PRE}

In this section we summarize some necessary background material and terminology. Section \ref{SubsectionRepTheoryBn} describes the representation theory of the hyperoctahedral groups in characteristic zero. In Section \ref{SubsectionBackgroundFIW} we review the foundations of $\FIW$--modules developed by Church--Ellenberg--Farb \cite{CEF} in type A and by the second author \cite{FIW1, FIW2} in type B/C. Section \ref{SubsectionBackgroundAsymptotics} defines the asymptotic notation that will be used in this paper. 

\begin{notn}{\bf (The inner product $\langle - \, ,  \, - \rangle_{G}$).} Throughout the paper, given a finite group $G$ we write $\langle - \, ,  \, - \rangle_{G}$ to denote the standard inner product on the space of class functions of $G$. By abuse of notation we may write a $G$--representation $V$ in one or both arguments to indicate the character of $V$.
\end{notn}

\subsection{Representation theory of the hyperoctahedral group $B_n$} \label{SubsectionRepTheoryBn}

Let $B_n$ denote the Weyl group in type B$_n$/C$_n$, the wreath product $$ B_n \cong \Z/2\Z\wr S_n=(\Z/2\Z)^n\rtimes S_n,$$ which we call the \emph{hyperoctahedral group} or \emph{signed permutation group}. By convention $B_0$ is the trivial group. We may view $B_n$ as the subgroup of permutations $S_{\Omega}\cong S_{2n}$ on the set $\Omega = \{ 1, \oo1, 2, \oo2, \ldots, n, \oo{n}\}$ defined by $$ B_n = \{ \sigma \in S_{\Omega} \; \; | \; \; \sigma(\oo{a}) = \oo{\sigma(a) }\quad \text{for all $1 \leq a \leq n$} \; \}.$$  Here $\oo{a}$  denotes negative $a$; in general the bar represents the operation of negation and satisfies $\oo{\;\oo{a}\;}=a$ for $a \in \Omega$.

\subsubsection{ Conjugacy classes of the hyperoctahedral groups}
 The conjugacy classes of $B_n$ are classified by \emph{signed cycle type}, defined as follows.  A \emph{positive $r$--cycle} is a signed permutation of the form $(s_1 s_2 \cdots  s_r)(\oo{s_1} \, \oo{s_2} \cdots \oo{s_r}) \in S_{\Omega}$, $s_i \in \Omega$. This element reverses the sign of an even number of letters. A \emph{negative $r$--cycle} is a signed permutation of the form $(s_1 s_2 \cdots s_r \oo{s_1} \cdots \oo{s_r})  \in S_{\Omega}$, which reverses an odd number of signs. The $r^{th}$ power of a positive $r$--cycle is the identity, whereas the $r^{th}$ power of a negative $r$--cycle is the product of $r$ transpositions  $(s_i \oo{s_i})$. Positive and negative $r$--cycles project to $r$--cycles under the natural surjection $B_n \to S_n$. For example, $(1 \, \oo3 \, 2)(\oo1 \, 3 \, \oo2)$ is a positive $3$-cycle and $(1 \, \oo3 \, 2 \, \oo1 \, 3 \, \oo2)$ is a negative $3$-cycle which both project to the $3$--cycle $(1\, 3 \, 2) \in S_n$. 

In 1930 Young \cite{YoungHyperoctahedral} proved that signed permutations factor uniquely as a product of positive and negative cycles, and two signed permutations are conjugate if and only if they have the same signed cycle type. We represent the signed cycle type of a signed permutation by a double partition $(\lambda, \mu)$, where $\lambda$ is a partition with a part of length $r$ for each positive $r$--cycle, and $\mu$ a partition encoding the negative $r$--cycles.

\subsubsection{ Character polynomials for $B_n$} \label{SubsectionCharacterPolys}
Given a signed permutation $\sigma$, let $X_r(\sigma)$ denote the number of positive $r$--cycles in its signed cycle type, and let $Y_r(\sigma)$ be the number of negative $r$--cycles. Then $X_r$ and $Y_r$ define class functions on the disjoint union $\coprod_{n \geq 0} B_n$. A \emph{character polynomial} with coefficients in a ring $\k$ is a polynomial $P \in \k[X_1, Y_1, X_2, Y_2, \ldots]$, in analogy to the character polynomials for the symmetric groups defined by MacDonald \cite[I.7 Example 14]{MacdonaldSymmetric}. A character polynomial $P$ restricts to a class function on $B_n$ for every $n \geq 0$; we denote its restriction by $P_n$. We define the \emph{degree} of a character  polynomial by setting $\deg(X_r)=\deg(Y_r)=r$ for all $r \geq 1$.

Let $\mu$ and $\lambda$ be partitions, and let $n_r$ be the function on the set of partitions that takes a partition and outputs the number of parts of size $r$. The space of character polynomials is spanned by polynomials of the form 	$$P_{\mu, \y} = { X \choose \mu }{ Y \choose \y} : = \prod_{r \geq 1} { X_r \choose n_r(\mu) } { Y_r \choose n_r(\y) }. $$ When $n=|\lambda|+|\mu|$ the restriction of $P_{\mu, \y}$ to $B_n$ is the indicator function for signed permutations of signed cycle type $(\lambda, \mu)$.

\subsubsection{ Classification of irreducible $B_n$--representations} The irreducible complex representations of $B_n$ are in natural bijection with \emph{double partitions} of $n$, that is, ordered pairs of partitions $(\lambda, \mu)$ such that $|\lambda|+|\mu|=n$. These irreducible representations are constructed from representations of the symmetric group $S_n$; this construction is described (for example) in Geck--Pfeiffer \cite{GeckPfeiffer}.

From the canonical surjection $B_n \to S_n$ we can pull back representations of $S_n$ to $B_n$. Let $V_{(\y, \varnothing)}$ denote the irreducible $B_n$--representation pulled back from the $S_n$--representation associated to the partition $\y$ of $n$. Let $V_{(\varnothing, (n))}$ denote the $1$-dimensional representation of $B_n$ where a signed permutation acts by $1$ or $-1$ depending on whether it reverses an even or odd number of signs. Then for any partition $\mu$ of $n$ we denote $V_{(\varnothing, \mu)} := V_{(\mu, \varnothing)} \otimes_{\C} V_{(\varnothing, (n))}$. In general, for partitions $\lambda$ of $k$ and $\mu$ of $m$, we define the $B_{k+m}$--representation
	$$V_{(\lambda, \mu)} := \Ind_{B_k \times B_m}^{B_{k+m}} V_{(\lambda, \varnothing)} \boxtimes V_{( \varnothing, \mu)}.$$
For each double partition $(\lambda, \mu)$ the representation $V_{(\lambda, \mu)}$ is distinct and irreducible, and this construction gives a complete list of irreducible complex $B_n$--representations. As with the symmetric group, each complex irreducible representation is defined over the rational numbers, and each representation is self-dual \cite[Corollary
3.2.14]{GeckPfeiffer}.





\subsection{The theory of $\FIW$--modules} \label{SubsectionBackgroundFIW}

Church--Ellenberg--Farb \cite{CEF} introduced the theory of \emph{FI--modules} to study sequences of representations of the symmetric groups $S_n$. FI denotes the category of finite sets and injective maps; an \emph{$\FI$--module} over a ring $R$ is a functor from FI to the category of $R$--modules.  Their results were generalized by the second author to sequences of representations of the classical Weyl groups in type B/C and D \cite{FIW1, FIW2}.

\begin{defn}{\bf(The categories FI and $\FI_{BC}$).} Following Church--Ellenberg--Farb \cite{CEF}, we let FI denote the category of finite sets and injective maps. We write $\bn$ or $[n]$ to denote the object $\{1,2,\ldots, n\}$ and ${\bf 0} := \varnothing$. Following work of the second author \cite{FIW1, FIW2}, we let $\FI_{BC}$ denote the type B/C analogue of FI; we may define $\FIBC$ to be the category where the objects are finite sets $\bn := \{1, \oo1, \ldots, n, \oo{n}\}$, and the morphisms are all injective maps $f: \bm \to \bn$ satisfying $f(\oo{a})=\oo{f(a)}$ for all $a$ in $\bm$. Notably the endomorphisms $\End(\bn)$  of FI are the symmetric groups $S_n$, and the endomorphisms $\End(\bn)$  of $\FI_{BC}$ are the hyperoctahedral groups $B_n$. We denote the families of Weyl groups $S_n$ and $B_n$ generically by $\W_n$, and we denote the categories FI and $\FI_{BC}$ generically by $\FIW$. \end{defn}
The following definitions appear in Church--Ellenberg--Farb \cite{CEF} and Wilson \cite{FIW1}.

\begin{defn}{\bf((Graded) $\FIW$--modules and $\FIW$--algebras).}  Fix $\W_n$ to denote either $S_n$ or $B_n$. Let $\k$ be a ring, assumed commutative and with unit.  An \emph{$\FIW$--module} $V$ over $\k$ is a functor from $\FIW$ to the category of $\k$--modules; its image is sequence of $\W_n$--representations $V_n:=V(\bn)$ with actions of the $\FIW$ morphisms. A \emph{graded $\FI_{BC}$--module $V^*$} over a ring $\k$ is a functor from $\FI_{BC}$ to the category of graded $\k$--modules; a \emph{graded $\FIW$--algebra $A^*$} over $\k$ is a functor from $\FIW$ to the category of graded $\k$--algebras.  Each graded piece $V^d$ or $A^d$ inherits an $\FIW$--module structure. We will refer to $d$ as the \emph{graded--degree} and $n$ as the \emph{$\FIW$--degree} of the $\k$--module $V^d_n$.
\end{defn}

\begin{defn}{\bf(Finite generation; degree of generation; finite type).} Let $\W_n$ denoted either the family of symmetric groups or the family of signed permutation groups. An $\FIW$--module $V$ is \emph{generated (as an $\FIW$--module)} by elements $\{v_i\} \subseteq \coprod_n V_n$ if $V$ is the smallest $\FIW$--submodule containing those elements.  A graded $\FIW$--algebra $A^*$ is \emph{generated (as an $\FIW$--algebra)} by the set $\{v_i\} \subseteq \coprod_n A^*_n$ if $A$ is the smallest $\FIW$--subalgebra containing $\{v_i\}$. An $\FIW$--module or $\FIW$--algebra $V$ is \emph{finitely generated} if it has a finite generating set, and $V$ is \emph{generated in $\FIW$--degree $\leq m$} if it has a generating set $v_i \in V_{m_i}$ such that $m_i \leq m$ for all $i$.  A graded $\FIW$--module or algebra $V^*$ has \emph{finite type} if each graded piece $V^d$ is a finitely generated $\FIW$--module. 
	\end{defn}
	
	\begin{defn}{\bf(Weight; slope).}
		Church--Ellenberg--Farb defined an FI--module $V$ over a subfield $\k$ of $\C$ to have \emph{weight $\leq m$} if for every $n$, every irreducible $S_n$--representation $V_{\lambda}$, $\lambda=(\lambda_1, \lambda_2, \ldots, \lambda_r)$, occuring in $V_n$ satisfies $(n-\lambda_1) \leq m$.  Analogously an $\FI_{BC}$--module $V$ over a subfield of $\C$  has \emph{weight $\leq m$} if for all $n$ every irreudcible $B_n$--representation $V_{(\lambda, \mu)}$, $\lambda=(\lambda_1, \lambda_2, \ldots, \lambda_r)$, satisfies $(n-\lambda_1) \leq m$. 
A graded $\FIW$--module or algebra $V^*$ has \emph{finite slope $M$} if $V^d$ has weight at most $dM$.
	\end{defn}
		$\FI$-- and $\FI_{BC}$--modules over $\k$ generated in degree at most $m$ necessarily have weight at most $m$ \cite[Proposition 2.3.5]{CEF}, \cite[Theorem 4.4]{FIW1}. 

Some of the early results on FI-- and $\FI_{BC}$--modules concern the implications of finite generation for the structure of an $\FIW$--module and its characters: over characteristic zero, a finitely generated $\FIW$--module is \emph{uniformly representation stable} in the sense of Church--Farb \cite[Definition 2.3]{CF}, and its characters are \emph{eventually polynomial}.  The following result was proved by Church--Ellenberg--Farb \cite{CEF} in type A and the second author \cite{FIW1, FIW2} in types B/C and D. 

\begin{thm} \label{REPSTABILITY}{\bf (Constraints on finitely generated $\FIW$--modules).} \label{ThmConstraintsfgmodules} Let $V$ be an $\FIW$--module over a subfield $\k$ of $\C$ which is finitely generated in degree $\leq m$. 
	\begin{itemize}
\item {\bf (Uniform representation stability)}{\rm (\cite[Theorem 1.13]{CEF},  \cite[Theorem 4.27]{FIW1}).} The sequence $V_n$ is uniformly representation stable with respect to the maps induced by the $\FIW$--morphisms the natural inclusions $I_n: \bn \to ({\bf n+1})$, stabilizing once $n \geq m + stabdeg(V)$. In particular, in the decomposition of $V_n$ into irreducible $\W_n$--representations, the multiplicities of the irreducible representations are eventually independent of $n$ in the sense of representation stability \cite[Definition 2.3]{CF}.  
\item 
{\bf (Character polynomials)}{\rm (\cite[Proposition 3.3.3 and Theorem 3.3.4]{CEF},   \cite[Theorem 4.6]{FIW2}).}  Let $\chi_n$ denote the character of the $\W_n$--representation $V_n$. Then there exists a unique character	polynomial $F_V$ of degree at most $m$ such that $F_V (\s) = \chi_n(\s)$ for all $\s \in \W_n$, for all $n$ sufficiently large.
\end{itemize}
\end{thm}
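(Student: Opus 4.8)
The plan is to reduce the statement to the ``free'' (representable-type) $\FIW$--modules and then bootstrap, following Church--Ellenberg--Farb \cite{CEF} in type A and the second author \cite{FIW1,FIW2} in type B/C. For a representation $W$ of $\W_m$, let $M(W)$ be the $\FIW$--module with $M(W)_n = \k[\Hom_{\FIW}(\bm,\bn)]\otimes_{\k\W_m}W$, on which $\W_n=\End(\bn)$ acts by post--composition; by definition of finite generation, an $\FIW$--module generated in $\FIW$--degree $\leq m$ is a quotient of a finite direct sum $\bigoplus_i M(W_i)$ with each $W_i$ a representation of some $\W_{m_i}$, $m_i\leq m$. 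So I first treat $M(W)$, where everything is explicit: $\Hom_{\FI}(\bm,\bn)\cong S_n/S_{n-m}$ and $\Hom_{\FIBC}(\bm,\bn)\cong B_n/B_{n-m}$, so $M(W)_n$ is an induced representation $\Ind_{\W_{n-m}\times\W_m}^{\W_n}(\k\boxtimes W)$, which the Pieri/Littlewood--Richardson rule (type A) or its double--partition analogue (type B/C, worked out in \cite{FIW1}) decomposes with multiplicities independent of $n$ once $n\geq 2m$, all of weight $\leq m$; thus $M(W)$ is uniformly representation stable with an explicit stable range. For its character, $\chi_{M(W)_n}(\sigma)$ is a weighted count of $\FIW$--morphisms $f\colon\bm\to\bn$ with $\sigma\circ f=f\circ\tau$ for $\tau\in\W_m$; the image of any such $f$ is forced to be a union of $\sigma$--orbits whose (signed) cycle structure is prescribed by the cycle type of $\tau$, and counting these unions shows that $\chi_{M(W)_n}$ agrees, for $n\geq 2m$, with a fixed polynomial of degree $\leq m$ in the class functions $X_r$ (and $Y_r$ in type B/C), where $\deg X_r=\deg Y_r=r$ -- for instance $\chi_{M(\bm)_n}=m!\binom{X_1}{m}$ for $\FI$ and $\chi_{M(\bm)_n}=2^m m!\binom{X_1}{m}$ for $\FIBC$.

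\textbf{From free modules to part (1).} The crucial structural input is that the category of $\FIW$--modules over the Noetherian ring $\k$ is locally Noetherian: a submodule of a finitely generated $\FIW$--module is finitely generated, in a bounded $\FIW$--degree (\cite{CEF} in type A, \cite{FIW1,FIW2} in type B/C). Hence for $V$ finitely generated in degree $\leq m$, choose a surjection $\phi\colon P=\bigoplus_i M(W_i)\twoheadrightarrow V$ with each $W_i$ a $\W_{m_i}$--representation, $m_i\leq m$; then $K=\ker\phi$ is finitely generated. Feeding the short exact sequence $0\to K\to P\to V\to 0$ into the ``weight $+$ stability degree'' bookkeeping of \cite[\S3]{CEF}, \cite[\S4]{FIW1}, weight passes to the quotient (so $V$ has weight $\leq m$, as do $P$ and $K\subseteq P$), and the stability degree of $V$ is controlled, via the long exact sequence on $\FIW$--(co)invariants, by those of $P$ and $K$, all finite by Step~1 and local Noetherianity. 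This gives uniform representation stability for $V$ with a stable range of the asserted shape $n\geq m+\mathrm{stabdeg}(V)$; in particular the multiplicities of the irreducibles in $V_n$ are eventually constant, and since $V$ has weight $\leq m$ only the ``stable'' irreducibles $V(\lambda)_n$ indexed by (double) partitions $\lambda$ of size $\leq m$ can occur, so $V_n\cong\bigoplus_{|\lambda|\leq m}V(\lambda)_n^{\oplus c_\lambda}$ for $n\gg 0$ with constants $c_\lambda$.

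\textbf{Part (2): the character polynomial.} It remains to recognise $\chi_{V_n}=\sum_{|\lambda|\leq m}c_\lambda\,\chi_{V(\lambda)_n}$ as the restriction of a character polynomial of degree $\leq m$ for $n\gg 0$. Observe that the free modules $M(W_\mu)$, with $W_\mu$ the irreducible indexed by a partition $\mu$ of size $\leq m$, satisfy $M(W_\mu)_n\cong V(\mu)_n\oplus\bigoplus_{|\lambda|<|\mu|}V(\lambda)_n^{\oplus a_{\mu\lambda}}$ for $n\gg 0$ (by Pieri/Littlewood--Richardson the extra $\lambda$'s interleave $\mu$ and are hence strictly smaller), so over the range $|\mu|,|\lambda|\leq m$ the class functions $\{\chi_{M(W_\mu)_n}\}$ and $\{\chi_{V(\lambda)_n}\}$ are related by an $n$--independent unitriangular (in size) matrix. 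Inverting it and invoking Step~1 -- each $\chi_{M(W_\mu)_n}$ is a character polynomial of degree $\leq|\mu|$ -- expresses $\chi_{V(\lambda)_n}$, for $n\gg 0$, as a character polynomial of degree $\leq|\lambda|\leq m$; summing with the $c_\lambda$ produces the required $F_V$ of degree $\leq m$. Uniqueness follows because distinct character polynomials of degree $\leq m$ differ on $\W_n$ for all large $n$, which is immediate from the fact that the restriction of $P_{\mu,\lambda}$ to $\W_{|\mu|+|\lambda|}$ is the indicator function of a conjugacy class together with a triangularity argument.

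\textbf{Main obstacle.} The genuinely deep ingredient is the local Noetherianity of $\FIW$--modules together with an effective bound on the $\FIW$--degree in which submodules (equivalently, kernels and relation modules) are generated -- Church--Ellenberg--Farb's Noetherianity theorem in type A and the analogue of the second author in type B/C. Granting this, the remaining work is comparatively routine: propagating the stability degree through $0\to K\to P\to V\to 0$ -- i.e.\ bounding the ``injectivity degree'' of the quotient, which is what yields the explicit stable range -- and the elementary but slightly intricate symmetric--function combinatorics of Steps~1 and (2) that pin down the degrees of the character polynomials of the free modules and of the stable irreducibles.
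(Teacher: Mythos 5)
This theorem is not proved in the paper at all---it is quoted with citations to Church--Ellenberg--Farb and to the second author's $\FIW$ papers---and your outline faithfully reconstructs the argument of those references: reduction to the represented modules $M(W)$ with their explicit decompositions and fixed-point character counts, local Noetherianity plus the weight/stability-degree bookkeeping through $0\to K\to P\to V\to 0$ for part (1), and the size-triangular change of basis between the characters of the $M(W_\mu)_n$ and the stable irreducibles for part (2). So your proposal is correct and follows essentially the same route as the paper's source, with the genuinely deep inputs (Noetherianity and the stability-degree propagation) correctly flagged as imported rather than reproved.
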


A major tool in the analysis of finitely generated FI-- and $\FI_{BC}$--modules is their relationship to represented functors and their subfunctors. 

\begin{defn} {\bf (The $\FIW$--modules $M(\bm)$).} \label{DefnM(m)} We denote the \emph{represented} $\FI$ or $\FI_{BC}$--module over $\k$ by $$ M(\bm):= \k\big[\Hom_{\FI}(\bm, -)\big] \qquad \qquad M_{BC}(\bm):= \k\big[\Hom_{\FI_{BC}}(\bm, -)\big] $$

	The $\FI$--module $M(\bm)_n \cong \k\big[ \Hom_{\FI}(\bm, \bn)\big]$ has $\k$--basis $$S_n / S_{n-m} \cong \{ e_{i_1, i_2, \ldots, i_m } \; | \; (i_1, i_2, \ldots, i_m) \text{ is an ordered $m$--tuple of distinct positive elements of $\bn$} \}$$ where the $m$--tuple $(i_1, i_2, \ldots, i_m)$ encodes the image of the $m$ elements $(1, 2, \ldots m)$ under an $\FI$--morphism $\bm \to \bn$. 
Similarly, the $\FI_{BC}$--module $M_{BC}(\bm)_n \cong \k\big[ \Hom_{\FI_{BC}}(\bm, \bn)\big]$ has $\k$--basis $$B_n / B_{n-m} \cong \left\{ e_{i_1, i_2, \ldots, i_m } \; \middle| \begin{array}{l} \; (i_1, i_2, \ldots, i_m) \text{ is an ordered $m$--tuple of elements of $\bn$;} \\ \text{ at most one of $a$ or $\oo{a}$ appears at most once} \end{array} \right\}$$ where the $m$--tuple $(i_1, i_2, \ldots, i_m)$ encodes the image of the $m$ elements $(1, 2, \ldots m)$ under an $\FI_{BC}$--morphism $\bm \to \bn$.  By an orbit--stabilizer argument we have isomorphisms of $\W_n$--representations $$ M(\bm)_n \cong \k \big[S_n/S_{n-m} \big] \qquad \qquad M_{BC}(\bm)_n \cong \k \big[B_n /B_{n-m}\big]. $$

	We denote the functors  $M(\bm)$ and $ M_{BC}(\bm)$ generically  by $M_\W(\bm)$. An $\FIW$--module $V$ over $\k$ is finitely generated in degree $\leq p$ if and only if it is a quotient of a finite direct sum of represented functors $M_\W(\bm)$ with $m \leq p$; see Church--Ellenberg--Farb \cite[Proposition 2.3.5]{CEF} and Wilson \cite[Proposition 3.15]{FIW1}. 
\end{defn}

\subsection{Asymptotics and asymptotic notation} \label{SubsectionBackgroundAsymptotics}

The following terminology features in the results of Section \ref{ASYMSection} on asymptotic stability.

\begin{defn}{\bf (Asymptotic equivalence; asymptotic bounds; Big and little O notation).} For functions $f(d)$ and $g(d)$, $d \in \Z_{\geq 0}$, we say that $f$ is \emph{asymptotically equivalent} to $g$ and write $f \sim g$  if $$\lim_{d \to \infty} \frac{f(d)}{g(d)}=1.$$



\noindent We say $f$ is order $o(g)$ if $$\lim_{d \to \infty} \frac{f(d)}{g(d)}=0.$$ We say that $f$ is order $O(g)$ or that $f$ is \emph{asymptotically bounded} by $g$ if $$ |f(d)| \leq C|g(d)| \qquad \text{for some constant $C$ and all $d$ sufficiently large}. $$ Note that the set of functions in $O(g)$ are closed under linear combinations (though not products).  
\end{defn}

\begin{defn} {\bf (Subexponential growth).} We say that a function $f(d)$ is \emph{subexponential} if $f$ is order $2^{o(d)}$. Notably, if $f$ is subexponential in $d$ and $0\leq r <1$ then the series $\sum_d f(d)r^d$ converges absolutely.  
\end{defn}

A classical example:
\begin{prop} [Hardy--Ramanujan \cite{HardyPartitions}, Uspensky \cite{UspenskyPartitions}]{\bf (Growth of partitions)}. \label{PartitionsSubexponential}  The number $\mathcal P(n)$ of partitons  of $n$ satisfies $$\mathcal P(n) \sim \frac {1} {4n\sqrt3} e^{\pi \sqrt {\frac{2n}{3}}}.$$
In particular $\mathcal P(n)$ is subexponential in $n$. 
\end{prop}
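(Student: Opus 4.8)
The plan is to read off the asymptotics of $\mathcal P(n)$ from the analytic behaviour, as $q \to 1^-$, of the Euler generating function
\[
F(q) \;=\; \sum_{n \ge 0} \mathcal P(n)\, q^n \;=\; \prod_{k \ge 1} \frac{1}{1 - q^k}, \qquad |q| < 1 .
\]
The essential input is the two--term expansion
\[
\log F(e^{-t}) \;=\; \frac{\pi^2}{6t} \;+\; \frac{1}{2}\log\frac{t}{2\pi} \;+\; O(t) \qquad (t \to 0^+),
\]
which one obtains either by applying the Euler--Maclaurin formula to $\log F(e^{-t}) = \sum_{m \ge 1}\tfrac1m \tfrac{e^{-mt}}{1-e^{-mt}}$, or more cleanly from the modular transformation law $\eta(-1/\tau) = \sqrt{-i\tau}\,\eta(\tau)$ of the Dedekind eta function $\eta(\tau) = q^{1/24}\prod_{k\ge1}(1-q^k)$ (with $q = e^{2\pi i\tau}$) applied at $\tau = it/2\pi$, which converts the product over $q$ near $1$ into a rapidly convergent product over $q$ near $0$.

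For the only consequence the paper uses, namely that $\mathcal P(n)$ is subexponential, a one--sided elementary version of this expansion already suffices. From $1 - x^m = (1-x)(1 + x + \cdots + x^{m-1}) \ge m\,x^{m-1}(1-x)$ one gets $\tfrac{x^m}{1-x^m} \le \tfrac{x}{m(1-x)}$ for $0 < x < 1$, hence, writing $x = e^{-t}$,
\[
\log F(x) \;=\; \sum_{m \ge 1} \frac1m\,\frac{x^m}{1-x^m} \;\le\; \frac{x}{1-x}\sum_{m \ge 1}\frac{1}{m^2} \;=\; \frac{\pi^2}{6}\cdot\frac{1}{e^t - 1} \;\le\; \frac{\pi^2}{6t}.
\]
Since $\mathcal P(n)\, x^n \le F(x)$ for every $n$, this yields $\log \mathcal P(n) \le nt + \pi^2/(6t)$; optimising at $t = \pi/\sqrt{6n}$ gives $\log \mathcal P(n) \le \pi\sqrt{2n/3}$, so in particular $\mathcal P(n) = 2^{o(n)}$, which is all the applications in this paper require.

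To upgrade this to the asymptotic equivalence I would run the Hardy--Ramanujan circle method (alternatively, invoke a Tauberian theorem of Ingham--Meinardus type). By Cauchy's formula,
\[
\mathcal P(n) \;=\; \frac{1}{2\pi i}\oint_{|q| = e^{-t_n}} \frac{F(q)}{q^{n+1}}\, dq ,
\]
and one chooses the radius $t_n = \pi/\sqrt{6n}$ so that the modulus of the integrand peaks at $q = 1$. Splitting the contour into a short \emph{major arc} near $q = 1$ and the complementary \emph{minor arcs}: on the major arc the expansion of $\log F(e^{-t})$ above together with a Laplace/saddle--point evaluation of the resulting integral (of the shape $\int \exp(nt + \pi^2/6t)\sqrt{t}\,\tfrac{dt}{t}$) produces the main term $\tfrac{1}{4n\sqrt3}\exp(\pi\sqrt{2n/3})$, while on the minor arcs the modular transformation shows $|F(q)|$ is smaller by a factor exponential in $\sqrt n$, so that portion is negligible.

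The main difficulty lies entirely in this last step: pinning down the precise constant $\tfrac{1}{4n\sqrt 3}$ — as opposed to the exponential rate $e^{\pi\sqrt{2n/3}}$, which the elementary bound above already gives exactly as an upper bound — requires the full two--term expansion of $\log F(e^{-t})$ (hence the functional equation for $\eta$, not merely an inequality), a saddle--point analysis with explicit error control, and the minor--arc estimate. Since Proposition~\ref{PartitionsSubexponential} is a classical theorem of Hardy--Ramanujan and Uspensky we simply cite it; and in any case the cheap bound $\mathcal P(n) \le e^{\pi\sqrt{2n/3}}$ is already enough to guarantee that $\mathcal P(n)$, and any polynomial multiple of it, is summable against $q^{-d}$ for $q > 1$, which is the only property used in the sequel.
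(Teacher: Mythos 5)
The paper gives no proof of this proposition at all: it is stated as a classical theorem of Hardy--Ramanujan and Uspensky and simply cited, exactly as you do at the end of your write-up. So your proposal is consistent with the paper's treatment, and everything you add on top is a bonus rather than a divergence. Your elementary upper bound is correct and worth having: the identity $\log F(x)=\sum_{m\ge1}\tfrac1m\tfrac{x^m}{1-x^m}$, the inequality $1-x^m\ge m x^{m-1}(1-x)$, the resulting bound $\log F(e^{-t})\le \pi^2/(6t)$, and the optimisation at $t=\pi/\sqrt{6n}$ giving $\log\mathcal P(n)\le\pi\sqrt{2n/3}$ are all checked easily, and this one-page argument already yields the only property the paper ever uses (subexponential growth of $\mathcal P(n)$, which feeds into Proposition \ref{TNCSubexponential} and Lemma \ref{SUBEXP}). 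Your circle-method sketch for the precise constant $\tfrac{1}{4n\sqrt3}$ is the standard route and is correctly described, and you are right that it is not needed here; deferring it to the cited literature is exactly what the authors do.
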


\section{Asymptotic stability}\label{ASYMSection}

The  main goal of this section is to prove  Theorem \ref{LIMIT}, a  general convergence result for $\FIW$-algebras finitely generated in $\FIW$-degrees $0$ and $1$.  Using the terminology and notation from the previous section, Theorem B can be restated as  Theorem \ref{LIMIT} below. We collect first  some preliminary combinatorial results in Section \ref{SectionCombinatorialResults}  to study the asymptotic behavior of $\FIW$--modules and their character polynomials. We then use these results in Section \ref{PROOFMAIN} to obtain  Theorem \ref{LIMIT}.   

\subsection{Asymptotics of character polynomials} \label{SectionCombinatorialResults}

Each of the results Propositions \ref{STABPOL}, Lemma \ref{CHARALGSTAB}, and Lemma \ref{EQUIV} mirrors a result proven by Church--Ellenberg--Farb \cite{CEFPointCounting} in Type A, and their methods can generally be modified to give arguments in Type B/C. For the sake of completeness we briefly describe these proofs in the case of the hyperoctahedral groups.

\begin{prop} \label{STABPOL} {\bf (Stability for inner products of character polynomials).} Fix a family $\W_n$ to be $S_n$ or $B_n$. Let $\k$ be a subfield of $\C$. Let $P, Q$ be two $\W_n$ character polynomials. Then the inner product $\langle Q_n, P_n \rangle_{\W_n}$  is independent of $n$ for $n \geq \deg(P) + \deg(Q)$. 
\end{prop}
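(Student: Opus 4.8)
The plan is to reduce, using bilinearity of $\langle -,-\rangle_{\W_n}$, to the case where $P$ and $Q$ are the spanning character polynomials $P_{\mu,\y}$, and then to rewrite the resulting inner product as an expectation over $\W_n$ of a count of configurations of cycles, which a standard orbit-counting argument shows is eventually constant in $n$. I will describe the argument for $\W_n = B_n$; the case $\W_n = S_n$ is identical with the single-partition basis $\binom{X}{\mu}$ in place of $P_{\mu,\y}$ and $|S_n| = n!$ in place of $|B_n| = 2^n n!$, and there it is the argument of Church--Ellenberg--Farb. Since a character polynomial of degree $d$ is a $\k$-linear combination of the $P_{\mu,\y}$ with $\deg P_{\mu,\y} \le d$ (the transition matrix between the $P_{\mu,\y}$ and the monomials $\prod_r X_r^{a_r}Y_r^{b_r}$ being triangular with nonzero diagonal), it suffices to prove that $\langle (P_{\mu,\y})_n, (P_{\nu,\p})_n\rangle_{B_n}$ is independent of $n$ once $n \ge |\mu|+|\y|+|\nu|+|\p| = \deg P_{\mu,\y} + \deg P_{\nu,\p}$.

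First I would pass to an expectation: $\langle f,g\rangle_{B_n} = \tfrac1{|B_n|}\sum_{\s\in B_n} f(\s)\overline{g(\s)}$, and since these class functions are $\Q$-valued this equals $\mathbb{E}_{\s\in B_n}\big[(P_{\mu,\y})_n(\s)\,(P_{\nu,\p})_n(\s)\big]$. The combinatorial heart of the matter is that, by Young's unique factorization of a signed permutation into positive and negative cycles, $(P_{\mu,\y})_n(\s) = \prod_{r\ge1}\binom{X_r(\s)}{n_r(\mu)}\binom{Y_r(\s)}{n_r(\y)}$ counts exactly the sub-collections $C$ of the signed cycles of $\s$ containing $n_r(\mu)$ positive $r$-cycles and $n_r(\y)$ negative $r$-cycles for every $r$; call such a $C$ a configuration of type $(\mu,\y)$, and note it involves $|\mu|+|\y|$ of the letters $1,\dots,n$. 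Hence the product being averaged counts ordered pairs $(C,D)$ of sub-configurations of $\s$ of types $(\mu,\y)$ and $(\nu,\p)$, so by linearity of expectation
\[
\langle (P_{\mu,\y})_n, (P_{\nu,\p})_n\rangle_{B_n} \;=\; \sum_{(C,D)} \mathbb{P}_{\s\in B_n}\big[\,C,D\ \text{are both sub-collections of the cycles of}\ \s\,\big],
\]
the sum over all pairs of configurations of these two types on the symbol set $\{1,\oo1,\dots,n,\oo n\}$.

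To evaluate this, note that if $C$ and $D$ share a letter $a$ then, each being a union of full signed cycles of $\s$ and $a$ lying in a unique signed cycle of $\s$, they share that whole cycle; so $C\cup D$ is again a configuration, supported on some set of $s \le |\mu|+|\y|+|\nu|+|\p|$ letters. Group the pairs $(C,D)$ by their isomorphism type $T$ — the pair of abstract configurations together with the pattern of cycles they share — of which there are finitely many. For a fixed pair of type $T$ covering $s$ letters, extending its restriction to those $2s$ symbols freely to the complement gives $\mathbb{P}_{\s}[\,C\cup D\subseteq\mathrm{cycles}(\s)\,] = |B_{n-s}|/|B_n| = 1/\big(2^s (n)_s\big)$ with $(n)_s := n!/(n-s)!$; and by orbit--stabilizer, the stabilizer in $B_n$ of a fixed such pair being $\Aut(T)\times B_{n-s}$ (with $B_{n-s}$ acting on the uncovered letters and $\Aut(T)$ a finite group depending only on $T$), the number of pairs of type $T$ in $B_n$ is $|B_n|/\big(|\Aut(T)|\,|B_{n-s}|\big) = 2^s (n)_s/|\Aut(T)|$. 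The factors $2^s(n)_s$ cancel, so type $T$ contributes $1/|\Aut(T)|$, independent of $n$, as soon as $n \ge s$. Summing,
\[
\langle (P_{\mu,\y})_n, (P_{\nu,\p})_n\rangle_{B_n} \;=\; \sum_{T} \frac{1}{|\Aut(T)|},
\]
a fixed finite quantity once every type $T$ is realizable, i.e. once $n \ge |\mu|+|\y|+|\nu|+|\p| = \deg P + \deg Q$, which proves the claim.

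The step I expect to require the most care is this orbit count: pinning down the precise notion of isomorphism type $T$ of a pair of configurations, checking there are only finitely many, and verifying that the stabilizer in $B_n$ of a fixed pair genuinely splits as $\Aut(T)$ times $B_{n-s}$, so that the falling factorials $(n)_s$ and the powers of $2$ cancel exactly and leave the $n$-independent contribution $1/|\Aut(T)|$. Everything else — the reduction to the $P_{\mu,\y}$, the passage to an expectation, and the combinatorial reading of $(P_{\mu,\y})_n(\s)$ — is formal. Alternatively, one could sidestep the ``pairs'' bookkeeping by expanding the product $P_{\mu,\y}\cdot P_{\nu,\p}$ into the $P$-basis (a polynomial identity, hence $n$-independent) via a Vandermonde-type identity for $\binom{X_r}{a}\binom{X_r}{b}$ and $\binom{Y_r}{a}\binom{Y_r}{b}$, reducing the statement to the single-partition fact that $\mathbb{E}_{B_n}[P_{\t,\kappa}] = 1/z_{(\t,\kappa)}$, the reciprocal centralizer order, for $n \ge |\t|+|\kappa|$ — which is the same configuration count applied to a single configuration rather than a pair.
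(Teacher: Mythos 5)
Your proposal is correct, and your main argument takes a somewhat different route from the paper's. The paper first uses that the product of two character polynomials is again a character polynomial of degree at most $\deg(P)+\deg(Q)$ to write $\langle Q_n, P_n\rangle_{\W_n} = \langle 1, Q_nP_n\rangle_{\W_n}$, reducing to the case $Q=1$ and $P = P_{\mu,\y}$ a basis element; it then evaluates $\langle P_{\mu,\y},1\rangle_{B_n}$ by counting pairs $(\s, S)$ with $S\subseteq[n]$ a $\s$-stable $d$-set on which $\s$ restricts to type $(\mu,\y)$, obtaining the stable value $N_{\mu,\y}/(2^d\, d!)$ for $n\geq d$. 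This is precisely the ``alternative'' you sketch in your last paragraph (your $1/z_{(\t,\kappa)}$ is the paper's $N_{\mu,\y}/|B_d|$). Your primary argument instead keeps $P$ and $Q$ separate and runs the orbit--stabilizer count on \emph{pairs} of cycle-configurations, grouped by isomorphism type $T$, with the contribution $\bigl(2^s(n)_s/|\Aut(T)|\bigr)\cdot\bigl(1/(2^s(n)_s)\bigr) = 1/|\Aut(T)|$ per type. Both proofs hinge on exactly the same cancellation between the size of an orbit of configurations and the probability that a random $\s$ contains a fixed one; yours trades the closure of character polynomials under multiplication (which the paper's reduction $\langle Q,P\rangle=\langle 1,QP\rangle$ quietly relies on, via Vandermonde-type identities) for the bookkeeping of types of overlapping pairs, which you correctly flag as the delicate step and handle correctly — in particular the observation that two sub-collections of the cycles of the same $\s$ sharing a letter must share the whole cycle, so only ``compatible'' pairs contribute.
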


\begin{proof} This result was proved for $S_n$ by Church--Ellenberg--Farb \cite[Proposition 3.9]{CEFPointCounting} and their proof adapts readily to type B/C; we briefly summarize this proof. Since  $\langle Q_n, P_n \rangle_{B_n} = \langle 1, Q_nP_n \rangle_{B_n}$ it suffices to check in the case when $Q=1$ and $P$ is the element
	$$P_{\mu, \y} = { X \choose \mu }{ Y \choose \y} : = \prod_{r \geq 1} { X_r \choose n_r(\mu) } { Y_r \choose n_r(\y) } $$
	associated to two partitions $\mu$ and $\y$. 
	Let $d = |\mu|+|\y|$, and notice that $\deg(P)=d$. Let  $\delta_{\y, \mu}$ denote the indicator function for the conjugacy class $(\mu, \y)$ in $B_d $. Let $N_{\mu, \y}$ denote the size of the conjugacy class $(\mu, \y)$ in $B_d$. Then for $n \geq d$, 
	\begin{align*}
	\left\langle { X \choose \mu }{ Y \choose \y}, 1 \right\rangle_{B_n} & 
	=  \frac{1}{|B_n|} \sum_{ \s\in B_n} \sum_{S \subseteq [n], |S|=d} \delta_{\y, \mu}(\s \vert_{S \cup \oo{S}}) \end{align*} where $\delta_{\y, \mu}(\s \vert_{S \cup \oo{S}}) :=0$ if ${S \cup \oo{S}}$ is not stabilized by $\sigma$. So \begin{align*}
	\left\langle { X \choose \mu }{ Y \choose \y}, 1 \right\rangle_{B_n} &  = \frac{1}{|B_n|} {n \choose d} N_{\mu, \y} |B_{n-d}|   \\
	& = \frac{N_{\mu, \y}}{2^d\; d!} 
	\end{align*}
Hence for $n \geq d= \deg(P)+\deg(Q)$ this inner product is independent of $n$, as claimed. 
\end{proof}

\begin{lem} \label{CHARALGSTAB} {\bf (A convergence result for $\FIW$--algebras of finite type and slope).} Let $\W_n$ denote one of the families $S_n$ or $B_n$. Let $\k$ be a subfield of $\C$. 
	Suppose that $A^*$ is a $\FIW$--algebra over $\k$ of finite type and slope $M$. Then for each $d$ and any $\W_n$ character polynomial $P$, the following limit exists: $$  \lim_{n \to \infty} \langle P_n, A^d_n \rangle_{\W_n}.$$  \end{lem}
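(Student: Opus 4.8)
The strategy is to reduce the statement to the known stability result for inner products of character polynomials, Proposition \ref{STABPOL}, by showing that for fixed graded-degree $d$ the character of the $\W_n$--representation $A^d_n$ is eventually given by a single character polynomial independent of $n$. First I would invoke the hypothesis that $A^*$ has finite type: each graded piece $A^d$ is a finitely generated $\FIW$--module. Combined with the slope hypothesis, $A^d$ has weight $\leq dM$, so in particular $A^d$ is a finitely generated $\FIW$--module whose irreducible constituents $V_{(\lambda,\mu)}$ all satisfy $n - \lambda_1 \leq dM$. By Theorem \ref{ThmConstraintsfgmodules} (the character polynomial statement, \cite[Proposition 3.3.3 and Theorem 3.3.4]{CEF}, \cite[Theorem 4.6]{FIW2}), there is a unique character polynomial $F_{A^d}$, of degree at most $dM$, such that the character $\chi^d_n$ of $A^d_n$ equals $F_{A^d}$ on all of $\W_n$ for $n$ sufficiently large — say for $n \geq N(d)$.

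With this in hand, the computation is immediate. For $n \geq N(d)$ we have
$$ \langle P_n, A^d_n \rangle_{\W_n} = \langle P_n, (F_{A^d})_n \rangle_{\W_n}, $$
and now both arguments are restrictions of character polynomials, $P$ of some fixed degree and $F_{A^d}$ of degree at most $dM$. By Proposition \ref{STABPOL}, this inner product is independent of $n$ once $n \geq \deg(P) + \deg(F_{A^d})$. Hence for all $n \geq \max\{N(d),\ \deg(P) + dM\}$ the quantity $\langle P_n, A^d_n \rangle_{\W_n}$ is constant, which shows the limit exists (and in fact stabilizes to that constant value).

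The only genuine content here is the appeal to Theorem \ref{ThmConstraintsfgmodules}: one must make sure its hypotheses are met, namely that $A^d$ is a finitely generated $\FIW$--module over a subfield of $\C$. Finite generation is exactly the "finite type" hypothesis, and the coefficient field hypothesis is in place; the slope hypothesis is used only to get a uniform (in a sense, degree-$dM$) bound on $F_{A^d}$, though for the bare existence of the limit one does not even need the explicit degree bound — finite generation of $A^d$ alone gives an eventually-polynomial character, and Proposition \ref{STABPOL} then finishes. So I expect no real obstacle; the main thing to be careful about is simply citing the right pieces of the earlier machinery and noting that both stabilization thresholds (the one making $\chi^d_n$ polynomial and the one making the inner product constant) depend only on $d$ and $P$, not on $n$.
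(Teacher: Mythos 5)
Your proposal is correct and follows essentially the same route as the paper's proof: use finite type plus the slope bound to get, via Theorem \ref{ThmConstraintsfgmodules} (i.e.\ \cite[Theorem 4.16]{FIW2}), an eventually-polynomial character of degree at most $dM$ for $A^d_n$, then apply Proposition \ref{STABPOL} to see the inner product stabilizes for $n \geq \max\{N(d), dM + \deg(P)\}$. Your side remark that the explicit degree bound from the slope hypothesis is not strictly needed for bare existence of the limit is also accurate.
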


\begin{proof}
	This result is proved in Type A by Church--Ellenberg--Farb; see the paragraph before \cite[Corollary 3.11]{CEFPointCounting}. Their argument carries over to Type B/C as follows. By assumption, for each fixed $d$ the $\FI_{BC}$--module $A^d_n$ is finitely generated and has weight at most $dM$. The second author proved that therefore for some $D_d \in \Z$, the characters of the sequence $\{A^d_n\}_n$ are given by a character polynomial of degree at most $dM$ for all $n \geq D_d$ \cite[Theorem 4.16]{FIW2}; see Theorem \ref{ThmConstraintsfgmodules}. By Proposition \ref{STABPOL}, then, the value $\langle P_n, A^d_n \rangle_{B_n}$ is independent of $n$ once $n= \max\{D_d, dM+\deg(P) \}$. This stable value gives the limit 
	$$  \lim_{n \to \infty} \langle P_n, A^d_n \rangle_{B_n} = \langle P_N, A^d_N \rangle_{B_N}, \qquad N= \max\{D_d, dM+\deg(P) \}. \qedhere$$ 
\end{proof}

\begin{lem}
\label{FGCHARALGSTAB}  {\bf (A convergence result for finitely generated $\FIW$--algebras).} Let $\k$ be a subfield of $\C$, and let $\W_n$ represent one of the families $S_n$ or $B_n$. Suppose that $A^*$ is an associative $\FIW$--algebra over $\k$ that is generated as an $\FIW$--algebra by finitely many elements of positive graded-degree. Then for each $d$ and any $\W_n$ character polynomial $P$, the following limit exists: $$  \lim_{n \to \infty} \langle P_n, A^d_n \rangle_{\W_n}.$$
\end{lem}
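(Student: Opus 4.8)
The plan is to reduce the statement to Lemma \ref{CHARALGSTAB} by showing that a finitely generated associative $\FIW$--algebra $A^*$ (generated in positive graded-degree) automatically has finite type and finite slope. Suppose $A^*$ is generated as an $\FIW$--algebra by finitely many homogeneous elements $v_1, \ldots, v_k$ lying in graded-degrees $e_1, \ldots, e_k \geq 1$ and in $\FIW$--degrees $m_1, \ldots, m_k$; set $m := \max_i m_i$ and $e := \min_i e_i \geq 1$. The key observation is that the subspace $A^d_n$ is spanned by products $w_1 w_2 \cdots w_t$ where each $w_j$ is an $\FIW$--translate of one of the generators $v_{i}$, subject to $\sum_j e_{i_j} = d$. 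Since each $e_{i_j} \geq e \geq 1$, such a product has length $t \leq d/e \leq d$. Each $\FIW$--translate of $v_i$ lies in $M_\W(\bm_i)$-type data, so the span of all length-$t$ products of translates of generators is a quotient of a finite direct sum of tensor-type $\FIW$--modules built from $M_\W(\bm_{i_1}) \otimes \cdots \otimes M_\W(\bm_{i_t})$; such a module is finitely generated in $\FIW$--degree $\leq m_{i_1} + \cdots + m_{i_t} \leq tm \leq (d/e)m \leq dm$.

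Carrying this out: first I would make precise the statement that $A^d$ is a quotient $\FIW$--module of a finite direct sum $\bigoplus M_\W(\bm_{i_1}) \otimes_\k \cdots \otimes_\k M_\W(\bm_{i_t})$, the sum taken over all finite sequences $(i_1, \ldots, i_t)$ with $\sum_j e_{i_j} = d$ (a finite index set, since $t \leq d/e$ and each index ranges over $1, \ldots, k$). The multiplication map of $A^*$ gives the surjection onto $A^d$. Since each $M_\W(\bm)$ is finitely generated in $\FIW$--degree $\leq m$ and a tensor product over $\k$ of finitely many such is finitely generated in $\FIW$--degree at most the sum of the generating degrees (this is \cite[Proposition 2.3.6]{CEF} in type A, and the analogous statement holds in type B/C), the direct sum is finitely generated in $\FIW$--degree $\leq dm$. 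Hence $A^d$, as a quotient, is finitely generated in $\FIW$--degree $\leq dm$, so by \cite[Proposition 2.3.5]{CEF} and \cite[Theorem 4.4]{FIW1} it has weight $\leq dm$. This shows $A^*$ has finite type and slope $M = m$ in the sense of the definitions above, and Lemma \ref{CHARALGSTAB} then gives the existence of the limit $\lim_{n\to\infty} \langle P_n, A^d_n \rangle_{\W_n}$ for every $d$ and every character polynomial $P$.

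The main obstacle I anticipate is the bookkeeping in the first step: one must verify carefully that $A^d$ really is spanned by bounded-length products of $\FIW$--translates of the generators, and that these translates organize into the represented functors $M_\W(\bm_i)$ in an $\FIW$--equivariant way compatible with multiplication. The subtlety is that a generator $v_i$ together with all its $\FIW$--translates spans a quotient of $M_\W(\bm_i)$ rather than $M_\W(\bm_i)$ itself, and the multiplication map must respect the $\FIW$--structure on the tensor product — both points follow from the definition of "generated as an $\FIW$--algebra" and the functoriality of $A^*$, but they should be spelled out. Everything else (the degree bound $t \leq d/e$ coming from positivity of the graded-degrees of the generators, and the passage from finite generation to finite slope) is routine given the cited results.
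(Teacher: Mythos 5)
Your proposal is correct and follows essentially the same route as the paper: the paper realizes $A^*$ as a quotient of the free associative $\FIW$--algebra $\k\langle V\rangle = \bigoplus_j V^{\otimes j}$ on the $\FIW$--module $V$ additively generated by the generators, uses positivity of the graded-degrees to bound the tensor length contributing to $A^d$ by $d$, and invokes the additivity of degree of generation under tensor products (\cite[Proposition 5.2]{FIW1}, \cite[Proposition 2.3.6]{CEF}) to conclude finite type and finite slope before applying Lemma \ref{CHARALGSTAB}. Your unpacking of $V^{\otimes j}$ into sums of tensor products of the represented functors $M_\W(\bm_i)$ is just a more explicit presentation of the same argument.
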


\begin{proof}
We first prove the result in type B/C.  Let $V$ be the graded $\FI_{BC}$--module additively generated by the generating set for the graded $\FI_{BC}$--algebra $A^*$. By assumption on the generators $V$ will be supported in finitely many positive degrees. By construction $V$ has finite type, and by Wilson \cite[Theorem 4.4]{FIW1}, $V$ has finite slope. It follows that $A^*$ is a quotient of the free associative $\FI_{BC}$--algebra on $V$, $$k\langle V \rangle := \bigoplus_{j=0}^{\infty} V^{\otimes j};$$ see \cite[Definition 5.9]{FIW1}. By \cite[Proposition 5.2]{FIW1}, tensor products respect finite generation, and degree of generation is additive. Hence if we let $M$ be the largest $\FI_{BC}$--degree of the generators of $A^*$, the $\FI_{BC}$--module $A^d$ is finitely generated in degree $\leq dM$.  Thus $A^*$ has finite type and finite slope $M$; see \cite[Proposition 5.10]{FIW1}.  By Lemma \ref{CHARALGSTAB}, the limit $\lim_{n \to \infty} \langle P, A^k_n \rangle_{B_n}$ exists. 

	The proof for FI follows the same argument.  The graded $\FI$--algebra $A^*$ has finite type and slope by Church--Ellenberg--Farb \cite[Proposition 3.2.5 and Theorem 4.2.3]{CEF}.
\end{proof}

The following result will be used in the proof of Theorem \ref{LIMIT}.

\begin{lem} \label{EQUIV} {\bf (On bounding growth of the graded pieces of $\FIW$--algebras).} Let $A^d$ be the $d^{th}$ graded piece of a graded $\FIW$--module over a subfield $\k$ of $\C$. Let $g(d)$ be a function. The following are equivalent: 
	\begin{enumerate}
		\item[I.] For each $a \geq 0$ there is a function $F_a(d)$ that independent of $n$ and order $O(g)$ such that $$ \dim_{\k} \Big( (A^d_n)^{\W_{n-a}} \Big) \leq F_a(d) \qquad \text{for all $d$ and $n$}. $$
		\item[II.] For each  $\W_n$ character polynomial $P$ there is a function $F_P(d)$ that is independent of $n$ and order $O(g)$ such that $$ | \langle P_n,  A^d_n \rangle_{B_n} | \leq F_P(d) \qquad \text{for all $d$ and $n$}. $$
	\end{enumerate}	
\end{lem}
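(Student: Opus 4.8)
The plan is to prove the two implications separately, exploiting the fact that both conditions are statements about the same finite-dimensional quantities expressed in two different bases for the space of class functions on $B_n$ (or $S_n$). The crucial bridge is the relationship between the invariant subspaces $(A^d_n)^{\W_{n-a}}$ and inner products with the ``indicator-type'' character polynomials $P_{\mu,\lambda}$.

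First I would establish the translation dictionary. For a $\W_n$--representation $W$ and the point-stabilizer $\W_{n-a} \leq \W_n$ (fixing $a$ chosen coordinates), Frobenius reciprocity gives
\[
\dim_{\k}\big(W^{\W_{n-a}}\big) \;=\; \big\langle \k[\W_n/\W_{n-a}],\, W \big\rangle_{\W_n} \;=\; \big\langle M_{\W}(\mathbf a)_n,\, W\big\rangle_{\W_n}.
\]
On the other hand, in Type B/C one has, by a counting argument on orbits of ordered $a$--tuples, that the character of $M_{BC}(\mathbf a)_n$ is a fixed non-negative integer combination of the indicator character polynomials $P_{\mu,\lambda}$ with $|\mu|+|\lambda| \le a$ — and conversely each $P_{\mu,\lambda}$ is an integer (not necessarily non-negative) combination of characters of the $M_{BC}(\mathbf b)_n$ with $|\mathbf b|\le\deg P_{\mu,\lambda}$, with coefficients independent of $n$ (for $n$ large). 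This is exactly the standard change-of-basis between the ``monomial'' basis $\{M_\W(\mathbf a)\}$ and the binomial basis $\{P_{\mu,\lambda}\}$ of the character-polynomial ring; it is implicit in \cite{CEF} and \cite{CEFPointCounting}. Since the set of $O(g)$ functions is closed under (finite, $n$-independent) linear combinations, a bound of the form $O(g)$ for all the quantities $\langle M_\W(\mathbf a)_n, A^d_n\rangle$ (with $\mathbf a$ ranging over a finite set) is equivalent to a bound of the form $O(g)$ for all the quantities $\langle P_{\mu,\lambda,n}, A^d_n\rangle$ with $|\mu|+|\lambda|$ bounded.

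With the dictionary in hand the proof is short. For (I) $\Rightarrow$ (II): an arbitrary character polynomial $P$ is a $\k$-linear combination of finitely many $P_{\mu,\lambda}$, say with $|\mu|+|\lambda|\le D=\deg P$; each $P_{\mu,\lambda,n}$ is an $n$-independent integer combination of the $M_\W(\mathbf b)_n$ with $|\mathbf b|\le D$; each $\langle M_\W(\mathbf b)_n, A^d_n\rangle = \dim_\k\big((A^d_n)^{\W_{n-|\mathbf b|}}\big) \le F_{|\mathbf b|}(d)$ which is $O(g)$ by hypothesis (I); take $F_P := \sum$ of the relevant $|c_i|\,F_{|\mathbf b_i|}$, a finite $n$-independent sum of $O(g)$ functions, hence $O(g)$. (One should note the inner products are all non-negative here, so no cancellation subtleties arise on the left, only on the coefficient side, where linearity of $O(\cdot)$ suffices.) For (II) $\Rightarrow$ (I): conversely, $\k[\W_n/\W_{n-a}]$ has character given by an $n$-independent non-negative integer combination of the $P_{\mu,\lambda,n}$ with $|\mu|+|\lambda|\le a$, so $\dim_\k\big((A^d_n)^{\W_{n-a}}\big) = \langle M_\W(\mathbf a)_n, A^d_n\rangle$ is a finite $n$-independent combination of the $\langle P_{\mu,\lambda,n}, A^d_n\rangle$, each bounded by an $O(g)$ function $F_{P_{\mu,\lambda}}(d)$ by (II); sum them to get $F_a$.

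The main obstacle — really the only subtle point — is making precise the claim that the change of basis between $\{M_\W(\mathbf a)_n\}_{\mathbf a}$ and $\{P_{\mu,\lambda,n}\}_{\mu,\lambda}$ has coefficients that are eventually independent of $n$, so that a single function $F_a$ or $F_P$ works uniformly in $n$. I would handle this by recalling the explicit formula for the character of $M_{BC}(\mathbf a)_n$ as a class function: evaluated on a signed permutation $\sigma$ it counts the fixed ordered $a$--tuples, which is a polynomial in the signed-cycle-counting functions $X_r,Y_r$ with coefficients depending only on $a$ (this is the Type B/C analogue of the computation in the proof of Proposition \ref{STABPOL}, where the identity $\langle M_\W(\mathbf a)_n, 1\rangle = N/(2^a a!)$-type formulas already exhibit the $n$-independence); inverting the resulting finite unitriangular integer matrix gives the reverse expansion, again with $n$-independent coefficients. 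Everything else is bookkeeping with the two observations that (a) $O(g)$ is closed under finite linear combinations and (b) these combinations are finite and $n$-independent.
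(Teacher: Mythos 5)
Your direction (II) $\Rightarrow$ (I) is correct and is essentially the paper's argument: the character of $\k[\W_n/\W_{n-a}]$ is itself a single character polynomial (namely $2^a a!\binom{X_1}{a}$ in type B/C), so applying (II) to that polynomial immediately gives $F_a$.

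The gap is in (I) $\Rightarrow$ (II), in the ``dictionary'' the whole argument rests on. The character of the permutation module $M_{BC}({\bf b})_n\cong\k[B_n/B_{n-b}]$ at $\sigma$ counts fixed ordered $b$--tuples and equals $2^b\,b!\binom{X_1}{b}$ --- a polynomial in $X_1$ alone (in type A it is $b!\binom{X_1}{b}$). So the span of the characters of all the $M_{\W}({\bf b})_n$ is only $\k[X_1]$, and the claimed change of basis with the full set of indicator polynomials $P_{\mu,\lambda}$ does not exist: for instance $Y_1$ is not in this span, since the classes of signed cycle type $((2),\varnothing)$ and $(\varnothing,(1,1))$ in $B_2$ both have $X_1=0$ but take the values $0$ and $2$ under $Y_1$ (in type A, compare $X_2$ on $(1\,2)(3\,4)$ versus $(1\,2\,3\,4)$). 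Equivalently, $\langle M_{\W}({\bf b})_n, A^d_n\rangle=\dim_\k\big((A^d_n)^{\W_{n-b}}\big)$ only records the multiplicity of the \emph{trivial} $\W_b$--representation in $\Res^{\W_n}_{\W_b\times\W_{n-b}}A^d_n$, which is strictly less information than hypothesis (I) provides. The repair is the paper's route: for each irreducible $B_a$--representation $V_\lambda$ the character of $\Ind^{B_n}_{B_a\times B_{n-a}}V_\lambda\boxtimes\k$ is a character polynomial $P^\lambda$, and these $P^\lambda$ (not the permutation characters) form a basis of the character polynomials of degree $\le a$; Frobenius reciprocity then gives
$$\langle P^\lambda_n, A^d_n\rangle_{B_n}=\langle V_\lambda, (A^d_n)^{B_{n-a}}\rangle_{B_a}\le \dim_\k\big((A^d_n)^{B_{n-a}}\big),$$
which is exactly what (I) bounds. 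With that substitution the rest of your bookkeeping (finiteness and $n$--independence of the expansions, closure of $O(g)$ under finite linear combinations) goes through.
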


\begin{proof} A special case of this result is the equivalence of the two conditions in \cite[Definition 3.12] {CEFPointCounting} for $S_n$; although Church--Ellenberg--Farb only state the equivalence in Type A for the case that $g$ is subexponential, their proof implies the result in Type A for general functions $g$. Their arguments may be adapted to the hyperoctahedral groups, and we summarize the proof in type B/C below.

Let $\k$ denote the trivial $B_n$--representation. Assume (II) holds. By Frobenius reciprocity, 
	\begin{align*}
	\dim_{\k} ((A^d_n)^{B_{n-a}}) &= \langle \k, \Res^{B_n}_{B_{n-a}} A^d_n \rangle_{B_{n-a}} \\
	&=  \langle \Ind_{B_{n-a}}^{B_n} \k,   A^d_n \rangle_{B_{n}} \\ 
	&=  \langle   \k[B_n / B_{n-a}],   A^d_n \rangle_{B_{n}} \\ 
	&=  \langle 2^a a! {X_1 \choose a},   A^d_n \rangle_{B_{n}} \\
& \leq F_{2^a a! {X_1 \choose a}}(d) \qquad \text{for all $d$ and $n$}.
	\end{align*} 
	So (II) implies (I). Now assume (I) holds and consider any double partition $\lambda$ of $a$ and the associated $B_a$--representation $V_{\lambda}$ with character $\chi^\lambda$. The character of the induced representation  $$\Ind^{B_n}_{B_a \times B_{n-a}} V_{\y} \boxtimes \k$$ is equal (for any $n$) to the character polynomial $$ P^{\lambda} =  \sum_{ \substack{(\alpha, \beta) \\ |\alpha|+|\beta|=a}} \chi^{\lambda}(\alpha, \beta) \prod_r {X_r \choose n_r(\alpha)} \prod_r {Y_r \choose n_r(\beta)} := \sum_{ \substack{(\alpha, \beta) \\ |\alpha|+|\beta|=a}} \chi^{\lambda}(\alpha, \beta) {X \choose \alpha}{Y \choose \beta}. $$ Here $\chi^{\lambda}(\alpha, \beta)$ denotes the value of the character $\chi^{\lambda}$ on a signed permutation of signed cycle type $(\alpha, \beta)$. The character polynomials $P^{\lambda}$ are an additive basis for the space of hyperoctahedral character polynomials, and so it suffices to bound $ | \langle P^{\lambda}_n, A^d_n \rangle_{B_n} |$. Observe that
	\begin{align*}
	\dim_{\k}((A^d_n)^{B_{n-a}}) & \geq \langle V_{\lambda}, (A^d_n)^{B_{n-a}} \rangle_{B_{a}} \\ 
	&=  \langle  V_{\lambda} \boxtimes \k \;, \; \Res^{B_n}_{B_a \times B_{n-a}}A^d_n \rangle_{B_{a}\times B_{n-a}} \\ 
	&=  \langle  \Ind^{B_n}_{B_a \times B_{n-a}} V_{\y} \boxtimes \k \; , \;   A^d_n \rangle_{B_{n}} \\
	&=  \langle  P^{\lambda}_n \; , \;   A^d_n \rangle_{B_{n}} 
	\end{align*}
	which gives the desired bound.
\end{proof} 

\subsection{Convergence for subquotients of $\FIW$-algebras generated in low degree}\label{PROOFMAIN}

The main result of this section is Theorem \ref{LIMIT}, which states that graded submodules of $\FIW$--algebras finitely generated in  $\FIW$--degree at most $1$ satisfy our desired convergence result.

\begin{thm}\label{LIMIT} {\bf (Criteria for convergent $\FIW$--algebras).} Let $\W_n$ denote one of the families $S_n$ or $B_n$. 
  For nonnegative integers $b,c$, define a graded $\FIW$--module $V \cong M_{\W}({\bf 0})^{\oplus b} \oplus M_{\W}({\bf 1})^{\oplus c}$ over a subfield $\k$ of $\C$ with positive gradings. Let $\Gamma^*$ denote the free symmetric, exterior, or graded--commutative $\FIW$--algebra generated by $V$. Let $A^*$ be any $\FIW$--algebra subquotient of $\Gamma^*$.
 Then for any character polynomial $P$ and $q>1$ the following sum converges absolutely  
$$ \sum_{d=0}^{\infty}  \frac{ \lim_{n \to \infty} \langle P_n, A^d_n \rangle_{\W_n}}{q^d}. $$ 
More generally, this sum will converge absolutely for any collection of $\W_n$--representations $\{A^d_n \; | \; d, n \geq 0 \}$ such that $A^d_n$ is a $\W_n$--equivariant subquotient of $\Gamma^d_n$. 
	\end{thm}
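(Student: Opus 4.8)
### Proof plan for Theorem \ref{LIMIT}

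The plan is to reduce the convergence question to a growth estimate on the graded pieces $\Gamma^d_n$ of the free algebra, and then to exploit the very restrictive combinatorial description of a free symmetric/exterior/graded-commutative algebra on generators sitting in $\FIW$-degrees $0$ and $1$. By Lemma \ref{EQUIV}, it suffices to produce, for each $a\geq 0$, a function $F_a(d)$, independent of $n$, with $\dim_\k\big((\Gamma^d_n)^{\W_{n-a}}\big)\leq F_a(d)$, and such that $\sum_d F_a(d)\,q^{-d}$ converges for all $q>1$; equivalently (by the Definition of subexponential growth and the remark following it) it is enough to show each $F_a$ can be taken to be subexponential in $d$. Since a subquotient $A^d_n$ of $\Gamma^d_n$ satisfies $|\langle P_n, A^d_n\rangle_{\W_n}|\leq |\langle P_n,\Gamma^d_n\rangle_{\W_n}|$ up to the triangle inequality over the finitely many conjugacy-class-indicator pieces of $P$, and since Lemma \ref{FGCHARALGSTAB} guarantees the limits $\lim_{n\to\infty}\langle P_n,A^d_n\rangle_{\W_n}$ exist, the absolute convergence of the series will follow from the same subexponential bound applied to $\Gamma^*$ itself. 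So the whole theorem comes down to: \emph{the dimension of the $\W_{n-a}$-invariants of $\Gamma^d_n$ is bounded, uniformly in $n$, by a subexponential function of $d$}.

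Now I would unwind what $\Gamma^d_n$ looks like. The generating module is $b$ copies of $M_\W(\mathbf 0)$ (each a single $\W_n$-fixed variable $y_k$, in some positive degree $e_k$) and $c$ copies of $M_\W(\mathbf 1)$ (each a set of variables indexed by $\mathbf n$ — the $x_i$ in type A, the $x_i,x_{\bar i}$ in type B/C — in some positive degree $f_\ell$). A monomial of graded-degree $d$ in the free algebra is thus a product of powers of the $y_k$'s times a monomial in the degree-$1$-generator variables; the total number of degree-$1$-generator variables used (with multiplicity, in the symmetric case; without multiplicity, in the exterior case; and bounded appropriately in the graded-commutative case) is at most $d$ since each such variable carries positive degree $f_\ell\geq 1$. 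In every case a monomial of degree $d$ involves at most $d$ of the ``indexed'' variables, hence mentions at most $d$ distinct subscripts from $\{1,\dots,n\}$. This is the crucial point: a $\W_n$-orbit of monomials of degree $d$ is determined by combinatorial data of bounded size — a choice of how the $y$-exponents distribute the remaining degree (a composition problem, polynomially many choices in $d$) together with a ``shape'' recording which of the $c$ families, which multiplicities/signs, and how the chosen subscripts are partitioned into blocks — and the number of such orbits is bounded by a function like (polynomial in $d$) times $c^d$ times $\mathcal P(d)$-type partition counts, which is subexponential in $d$ by Proposition \ref{PartitionsSubexponential}. Here I would be careful about the difference between type A and type B/C (signs double the per-subscript data but only by a bounded factor per variable) and between the symmetric, exterior, and graded-commutative conventions (these only change which multiplicity patterns are allowed, never increasing the count).

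Finally, to pass from counting $\W_n$-orbits of monomials to bounding $\dim_\k\big((\Gamma^d_n)^{\W_{n-a}}\big)$: the invariants $(\Gamma^d_n)^{\W_{n-a}}$ decompose over $\W_n$-orbits of monomials, and for a fixed orbit the dimension of its $\W_{n-a}$-invariant subspace is at most the number of $\W_{n-a}$-orbits it breaks into, which — because a degree-$d$ monomial touches at most $d$ subscripts and $\W_{n-a}$ fixes $n-a$ of the last letters — is bounded by a function of $d$ and $a$ only (roughly, the number of ways to place $\leq d$ marked subscripts into the $a$ ``free'' slots versus the ``frozen'' region, which is $O(d^a)$ times the per-orbit count, independent of $n$). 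Multiplying the (subexponential in $d$) orbit count by this polynomial-in-$d$, $n$-independent factor gives the desired $F_a(d)$, still subexponential in $d$. Then Lemma \ref{EQUIV} converts this to the bound $|\langle P_n, \Gamma^d_n\rangle_{\W_n}|\leq F_P(d)$ with $F_P$ subexponential, which passes to subquotients $A^d_n$, and absolute convergence of $\sum_d \big(\lim_{n\to\infty}\langle P_n,A^d_n\rangle_{\W_n}\big)q^{-d}$ follows since $\sum_d F_P(d)q^{-d}<\infty$ for $q>1$. The main obstacle I anticipate is purely bookkeeping: setting up a clean parametrization of $\W_n$-orbits of degree-$d$ monomials that works uniformly across the three algebra structures and both Weyl-group families, and checking that the per-subscript ``decorations'' (family index in $\{1,\dots,c\}$, multiplicity, sign) contribute at most a constant factor per unit of degree so that the total is genuinely of the form $C^d$ times a partition count rather than something growing faster. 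No single step is deep; the care is in making the counting argument honestly $n$-independent.
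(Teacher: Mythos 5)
Your overall architecture matches the paper's: reduce via Lemma \ref{EQUIV} to a uniform-in-$n$, subexponential-in-$d$ bound on $\dim_\k\big((\Gamma^d_n)^{\W_{n-a}}\big)$, use exactness of invariants to pass to subquotients, invoke Lemma \ref{FGCHARALGSTAB} for existence of the limits, and count $\W_{n-a}$-orbits of degree-$d$ monomials by noting that the $y$-variables contribute only polynomially and that each monomial touches at most $d$ subscripts, so an orbit is recorded by a bounded monomial in the ``first $a$'' indexed variables together with a partition of the remaining variables, coloured by family index (and sign in type B/C). This is exactly the paper's Lemma \ref{SUBEXP}.

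However, there is a genuine gap in your final counting estimate. You bound the number of orbits by ``(polynomial in $d$) times $c^d$ times $\mathcal P(d)$-type partition counts'' and assert this is subexponential by Proposition \ref{PartitionsSubexponential}. It is not: for $c\geq 2$ (and already for $c=1$ in type B/C, where signs give two colours) the factor $c^d$ is exponential in $d$, so your bound only yields convergence of $\sum_d F_P(d)q^{-d}$ for $q$ larger than the number of colours, whereas the theorem claims convergence for every $q>1$. The quantity you actually need to control is the number of \emph{coloured partitions} $T(N,C)$ --- partitions of $N$ balls each carrying one of $C$ colours into multisets --- and while $T(N,C)\leq C^N\mathcal P(N)$ is a valid upper bound, the whole content of the paper's Proposition \ref{TNCSubexponential} is that $T(N,C)$ is in fact subexponential in $N$ for each fixed $C$ (it grows roughly like $\exp(c_C N^{C/(C+1)})$), which cannot be deduced from Proposition \ref{PartitionsSubexponential} by multiplying by $C^N$. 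The paper proves this by an encoding argument: parts of size at most $(\log N)^2$ are recorded by their multiplicities among the polynomially many possible small coloured multisets, and the fewer than $N/(\log N)^2$ large parts are recorded by their colour profiles, giving codewords of sublinear length. Your proof needs this (or an equivalent) estimate; with the bound as you state it, the theorem as claimed does not follow.
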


\noindent We assume that the gradings on the algebra generators $M_{\W}({\bf 0})^{\oplus b} \oplus M_{\W}({\bf 1})^{\oplus c}$ are $\W_n$--invariant. 
	
	We will use Theorem \ref{LIMIT} to show convergence for coinvariant algebras in Proposition \ref{CONVCOIN}, and to obtain Theorem \ref{ASYM_TORI} on stability for statistics on maximal tori.

The proof of Theorem \ref{LIMIT} will use the following result on the asymptotics of enumerating coloured partitions. Proposition \ref{TNCSubexponential} generalizes the result that partitions of $n$ grow subexponentitally in $n$ (see Proposition \ref {PartitionsSubexponential}). 

\begin{prop}\label{TNCSubexponential} For integers $N$ and $C$, let $T(N,C)$ denote the number of ways that $N$ balls may be first each coloured by one of $C$ colours, and then partitioned into any number of multisets. Balls of the same colour are indistinguishable. For every fixed $C$ the sequence $T(N,C)$ grows subexponentially in $N$.
\end{prop}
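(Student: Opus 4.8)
The plan is to bound $T(N,C)$ above by a product of ordinary partition numbers, and then apply the Hardy--Ramanujan asymptotics (Proposition \ref{PartitionsSubexponential}) together with the fact that a bounded product of subexponential functions, composed with linear substitutions, remains subexponential. First I would fix a colouring of the $N$ balls: since balls of the same colour are indistinguishable, a colouring is determined by a composition $N = n_1 + n_2 + \cdots + n_C$ with $n_i \geq 0$, where $n_i$ is the number of balls of colour $i$. Given such a colouring, partitioning the $N$ balls into multisets amounts to independently choosing, for each colour $i$, a partition of the $n_i$ balls of that colour — because within a multiset the balls of a fixed colour are interchangeable, a multiset is recorded only by how many balls of each colour it contains, and the collection of multisets therefore decomposes as a $C$-tuple of partitions $(\pi_1, \dots, \pi_C)$ with $\pi_i \vdash n_i$. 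Hence
\begin{equation*}
T(N,C) \;=\; \sum_{\substack{n_1 + \cdots + n_C = N \\ n_i \geq 0}} \; \prod_{i=1}^{C} \mathcal P(n_i),
\end{equation*}
where $\mathcal P$ denotes the partition-counting function (with the convention $\mathcal P(0)=1$).

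Next I would estimate this sum. The number of compositions $(n_1, \dots, n_C)$ of $N$ into $C$ nonnegative parts is $\binom{N+C-1}{C-1}$, which for fixed $C$ is polynomial in $N$. For each such composition, every $n_i \leq N$, so $\mathcal P(n_i) \leq \mathcal P(N)$ (the partition function is monotone), giving the crude bound
\begin{equation*}
T(N,C) \;\leq\; \binom{N+C-1}{C-1} \, \mathcal P(N)^{C}.
\end{equation*}
By Proposition \ref{PartitionsSubexponential}, $\mathcal P(N) \sim \tfrac{1}{4N\sqrt 3}\, e^{\pi\sqrt{2N/3}}$, so $\mathcal P(N)^{C}$ is order $e^{C\pi\sqrt{2N/3} + O(\log N)}$, which is $2^{o(N)}$; multiplying by the polynomial factor $\binom{N+C-1}{C-1}$ does not change this. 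Therefore $T(N,C)$ is order $2^{o(N)}$, i.e. subexponential in $N$ for each fixed $C$, as claimed.

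I do not expect a serious obstacle here; the only point requiring a little care is the combinatorial identity for $T(N,C)$ — specifically the claim that once a colouring is fixed, partitioning the balls into multisets is equivalent to independently partitioning each monochromatic block. One should check that no information is lost or double-counted: a multiset of balls is faithfully recorded by its colour-content vector, two multisets with the same colour-content are equal, and so an unordered collection of such multisets is the same data as, for each colour, an unordered collection (i.e. partition) of that colour's balls. Once this bijection is in hand, the rest is the elementary estimate above. (Alternatively, one could avoid the identity entirely and argue directly that $T(N,C) \leq \mathcal{P}_C(N)$, where $\mathcal P_C$ counts partitions of $N$ with parts coloured in $C$ colours, and invoke the standard fact — provable by the same Hardy--Ramanujan circle-method input, or by a generating-function comparison — that coloured partition numbers are subexponential; but the self-contained route through $\mathcal P(N)^C$ is cleaner.)
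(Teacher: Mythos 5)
There is a genuine gap: the combinatorial identity at the heart of your argument is false. Once a colouring is fixed, partitioning the coloured balls into multisets is \emph{not} equivalent to independently partitioning each monochromatic block, because a single part may contain balls of several colours, and the $C$-tuple of monochromatic partitions forgets how pieces of different colours are grouped together into common parts. For example, with one black and one white ball, your formula gives $\mathcal P(1)\mathcal P(1)=1$, but there are two coloured partitions, $\{\bullet,\circ\}$ and $\{\bullet\}\cup\{\circ\}$. More tellingly, your right-hand side $\sum_{n_1+\cdots+n_C=N}\prod_i\mathcal P(n_i)$ is exactly the $C$-coloured partition number $\mathcal P_C(N)$ (its generating function is $\prod_n(1-x^n)^{-C}$), and for $N=3$, $C=2$ it equals $10$, whereas the paper computes $T(3,2)=14$. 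So your ``identity'' undercounts, the inequality $T(N,C)\leq\binom{N+C-1}{C-1}\mathcal P(N)^C$ does not follow from it, and your fallback bound $T(N,C)\leq\mathcal P_C(N)$ points in the wrong direction: parts-coloured partitions correspond to coloured partitions in which every part is monochromatic, so in fact $\mathcal P_C(N)\leq T(N,C)$.

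The difficulty your argument elides is precisely that a part is an arbitrary nonzero vector in $\Z_{\geq 0}^C$ (its colour-content), not a single coloured integer, and naively bounding the number of colourings of each part of an underlying uncoloured partition blows up exponentially when there are many small parts. The paper's proof handles this by an encoding argument: it splits the parts into ``small'' ones (at most $(\log N)^2$ balls), which are recorded by their multiplicities among the polynomially many possible small colour-content vectors, and ``large'' ones (fewer than $N/(\log N)^2$ of them), each recorded by its $C$-tuple of colour counts; the total codeword length is $O((\log N)^{2C+1}+N/\log N)=o(N)$, giving subexponential growth. If you want to keep a Hardy--Ramanujan--style route, you would need a correct upper bound on the number of multisets of nonzero vectors in $\Z_{\geq 0}^C$ with total coordinate sum $N$, which is not supplied by $\mathcal P(N)^C$ or $\mathcal P_C(N)$.
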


For example, $T(3,2)=14$, corresponding to the 14 coloured partitions
{\small \begin{align*}   &\{\bullet, \bullet, \bullet \}, \; \;  \{\bullet, \bullet, \circ \}, \; \;  \{\bullet, \circ, \circ \}, \; \;  \{\circ,  \circ,  \circ \}, \; \;   \{\bullet, \bullet \} \cup \{ \bullet \}, \; \;  \{\bullet, \bullet \} \cup \{ \circ \}, \; \;  \{\bullet, \circ \} \cup \{ \bullet \}, \; \;  \{\bullet, \circ \} \cup \{ \circ \}, \; \;  \\
&  \{\circ, \circ \} \cup \{ \bullet \}, \; \;  \{\circ, \circ \} \cup \{ \circ \}, \;\; \{\bullet\} \cup \{ \bullet\} \cup \{ \bullet \}, \; \; \{\bullet\} \cup \{ \bullet\} \cup \{ \circ \}, \; \;  \{\bullet\} \cup \{ \circ\} \cup \{ \circ\}, \; \;  \{\circ\} \cup \{ \circ\} \cup \{ \circ \}.
\end{align*} }
\vspace{-1.5em}
\begin{proof} Suppose throughout that $C$ is fixed.  We will create a code with a codeword recording each partition of $N$ balls coloured with $C$ colours. To verify that $T(N,C)$ is subexponential in $N$, it suffices to check that the maximal length of these codewords is sublinear in $N$.  

Consider a partition $P=\{P_1, P_2, \ldots, P_{\ell}\}$ of $N$ coloured balls. We will create a distinct coding scheme for (i) the ``small" parts $P_i$ containing at most $s:=(\log N)^2$ balls, and (ii) the ``large" parts with more than $s$ balls. 

\begin{enumerate}
\item[(i)] (Small parts). There are ${ s+C \choose C }$ possible sets of at most $s$ balls coloured by $C$ colours (this is the equal to the number of monomials of $C$ variables of degree at most $s$). Each of these sets appears as a part $P_i$ in our coloured partition $P$ at most $N$ times. We can then encode these sets by an ordered ${ s+C \choose C }$--tuple of integers from 0 to $N$, each recording the number of times the corresponding set appears in $P$. The number of characters needed to encode this tuple is asymptotically bounded in $N$ by the function \begin{align*} &{ s+C \choose C } (\log N ) \\& < (s+C)^C (  \log N) \\& = ((\log N)^2+C)^C ( \log N ).\end{align*}  
\item[(ii)] (Large parts). There are fewer than $\frac{N}{s}$ parts $P_i$ in $P$ with cardinality strictly greater than $s$. For each of these parts, we record an ordered $C$--tuple of integers from $0$ to $N$ encoding the number of balls of each given colour in that part. The total number of characters needed to do this is asymptotically bounded by $$C\left(\frac{N}{s}\right)(\log N) = \frac{CN}{(\log N)} $$ 
\end{enumerate}

Combining (i) and (ii) we find that the maximal length of a codeword is asymptotically bounded by
\begin{align*} \Big((\log N)^2+C\Big)^C ( \log N)+\frac{CN}{\log N} .
\end{align*}
We conclude that $\log(T(N,C))$ grows sublinearly in $N$, and so $T(N,C)$ is subexponential in $N$ as claimed.
\end{proof}

Having established a bound on the growth of $T(N,C)$, we now need one final result in order to prove Theorem \ref{LIMIT}.

\begin{lem} \label{SUBEXP} Let $\W_n$ denote one of the families $S_n$ or $B_n$. 
 For nonnegative integers $b,c$, define a graded $\FIW$--module $V \cong M_{\W}({\bf 0})^{\oplus b} \oplus M_{\W}({\bf 1})^{\oplus c}$ over a subfield $\k$ of $\C$ with positive gradings. Let $\Gamma^*$ denote the free symmetric, exterior, or graded--commutative $\FIW$--algebra generated by $V$. Let $A^*$ be any $\FIW$--algebra subquotient of $\Gamma^*$. Then for each $a \geq 0$ there is a function $F_a(d)$ that is independent of $n$ and subexponential in $d$ so that  
 $$\dim_{\k}\big((A^d_n)^{B_{n-a}}\big) \leq F_a(d) \qquad \text{for all $n$ and $d$.} $$ 
  It follows that,  for any $\W_n$ character polynomial $P$, there exists a function $F_P(d)$  independent of $n$ and subexponential in $d$ such that 
		$$ |\langle P_n, A^d_n \rangle_{\W_n} | \leq F_P(d). $$ 
	More generally, if $\{ A^d_n \; \vert \; d, n \geq 0 \}$ is any collection of $\W_n$--representations such that $A^d_n$ is an $\W_n$--equivariant subquotient of $\Gamma^d_n$, then there exists a function $F_P(d)$ as above.

	\end{lem}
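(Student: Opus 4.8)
The strategy is to reduce everything to a count of $\W_{n-a}$--invariants in the free algebra $\Gamma^d_n$, and then to identify that count with a bounded number of coloured partition counts $T(N,C)$, which Proposition \ref{TNCSubexponential} tells us is subexponential in $N$. First I would observe that since $A^d_n$ is a subquotient of $\Gamma^d_n$ as a $\W_n$--representation, we have $\dim_\k\big((A^d_n)^{\W_{n-a}}\big) \leq \dim_\k\big((\Gamma^d_n)^{\W_{n-a}}\big)$ (taking $\W_{n-a}$--invariants is exact in characteristic zero, and the dimension of the invariants of a subquotient is at most that of the ambient module). So it suffices to bound $\dim_\k\big((\Gamma^d_n)^{\W_{n-a}}\big)$ by a function of $d$ alone, independent of $n$, that is subexponential in $d$. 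The last two sentences of the lemma then follow formally: the bound for $\langle P_n, A^d_n\rangle_{\W_n}$ comes from Lemma \ref{EQUIV} (the implication (I) $\Rightarrow$ (II)), since $O$ of a subexponential function is subexponential and the set of such functions is closed under the linear combinations appearing in the character polynomial basis $P^\lambda$; and the subquotient statement is identical since the invariant-dimension bound only used the subquotient hypothesis.

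The heart of the argument is the combinatorial bound on $\dim_\k\big((\Gamma^d_n)^{\W_{n-a}}\big)$. A $\k$--basis for $\Gamma^*_n$ is given by monomials in the generating variables: for each of the $b$ copies of $M_\W(\mathbf 0)_n$ there is a single variable $y$, and for each of the $c$ copies of $M_\W(\mathbf 1)_n$ there are the variables $x_1,\dots,x_n$ (in type A) or $x_1,x_{\oo1},\dots,x_n,x_{\oo n}$ (in type B/C). A monomial of graded-degree $d$ uses only boundedly many distinct $x$-variables — at most $d$ of them, since each contributes at least $1$ to the degree (the gradings are positive) — regardless of how large $n$ is. Now a $\W_{n-a}$--invariant vector is a $\k$--linear combination of $\W_{n-a}$--orbit sums of monomials. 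I would argue that each such orbit is determined by a bounded amount of data: namely, which subscripts lie in the ``special'' set $\{1,\dots,a\}$ (on which $\W_{n-a}$ acts trivially) versus the ``generic'' set $\{a+1,\dots,n\}$ (which $\W_{n-a}$ permutes, and negates in type B/C), together with the multiplicities. Concretely, an orbit sum is specified by (1) a monomial in the $y$'s and in the $x_i, x_{\oo i}$ with $i \leq a$ — and there are only finitely many of these of degree $\leq d$, a number bounded independently of $n$ — together with (2) the ``shape'' of the generic part: a partition of the remaining degree into parts, each part tagged by which of the $c$ copies of $M_\W(\mathbf 1)$ and, in type B/C, whether it is a barred or unbarred variable, subject to the constraint that each generic subscript is used by at most one of the two signs. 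This shape data is exactly a coloured partition: we are partitioning at most $d$ ``units of degree'' that have been coloured by a bounded palette of colours (the number of colours depending only on $b, c, a$ and the type, not on $n$ or $d$). Hence $\dim_\k\big((\Gamma^d_n)^{\W_{n-a}}\big)$ is bounded by a constant times $\sum_{N \leq d} T(N,C)$ for a fixed $C$, which is subexponential in $d$ by Proposition \ref{TNCSubexponential} and the fact that a bounded-length sum of subexponential functions is subexponential.

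The main obstacle — and the place where care is genuinely needed — is setting up the bijection between $\W_{n-a}$--orbits of degree-$d$ monomials and coloured-partition data cleanly, especially in type B/C where $\W_{n-a}$ not only permutes the generic subscripts but also acts by sign changes, so that, e.g., $x_{a+1}x_{a+2}$ and $x_{\oo{a+1}}\,x_{\oo{a+2}}$ and $x_{a+1}x_{\oo{a+2}}$ may or may not lie in the same orbit depending on the exact form of the action, and a generic subscript $i$ is constrained so that $x_i$ and $x_{\oo i}$ do not both appear (by the definition of the basis of $M_{BC}(\mathbf 1)$ — though in the free \emph{algebra} both can appear in different factors, so this subtlety must be handled for each of the symmetric, exterior, and graded-commutative cases). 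I would handle this by being deliberately wasteful: rather than counting orbits exactly, I would exhibit a surjection from a set of coloured-partition codewords onto the set of $\W_{n-a}$--orbits (equivalently, show every invariant is a combination of orbit sums each of which is named by such a codeword), which is all that is needed for an upper bound. Being generous with the number of colours $C$ — absorbing all the sign and copy-index bookkeeping into the colour palette — keeps $C$ a constant (independent of $n$ and $d$) and makes the estimate go through uniformly across the three algebra types and both Weyl-group families.
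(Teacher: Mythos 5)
Your proposal is correct and follows essentially the same route as the paper's proof: reduce to bounding $\dim_\k\big((\Gamma^d_n)^{\W_{n-a}}\big)$ via exactness of invariants and Lemma \ref{EQUIV}, split each monomial into a ``small-subscript'' piece (polynomially many choices in $d$, independent of $n$) and a generic piece encoded as a coloured partition with a palette of constant size absorbing the copy-index and sign data, then invoke Proposition \ref{TNCSubexponential}. The paper implements your ``deliberately wasteful'' surjection in exactly the same way, by overcounting $B_{n-a}$--orbits with $S_{n-a}$--orbits and bounding the total by $\binom{(b+2ac)+d}{b+2ac}\,T(d,2c)$.
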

		\begin{proof} 
		The proof in Types A and B/C are extremely similar; we describe the proof in detail for the hyperoctahedral groups and then briefly outline how to adapt the proof to the symmetric groups. 
		By Lemma \ref{EQUIV} it suffices to show the first statement:  for each $a \geq 0$ there is a function $F_a(d)$ that is independent of $n$ and subexponential in $d$ so that  $$\dim_{\k}\big((A^d_n)^{B_{n-a}}\big) \leq F_a(d) \qquad \text{for all $n$ and $d$.} $$  		
		
		Fix $a \geq 0$. By assumption $A^d_n$ is a $\W_n$--subquotient of the $\W_n$--representation $\Gamma^d_n$. Since taking $B_{n-a}$--invariants is exact in characteristic zero, the graded algebra $(A_n^*)^{B_{n-a}}$ is a subquotient of $(\Gamma_n^*)^{B_{n-a}}$.  It suffices then to find a function $F_a(d)$ that is independent of $n$ and subexponential in $d$ so that $$\dim_{\k}\big((\Gamma^d_n)^{B_{n-a}}\big) \leq F_a(d) \qquad \text{for all $n$ and $d$.} $$

The monomials of $\Gamma^d_n$ are (graded-- or anti--) commutative words in the $(b+2nc)$ variables $$y^{(1)}, y^{(2)}, \ldots, y^{(b)} \quad \in M_{BC}({\bf 0})^{\oplus b}_n \qquad \text{and}$$ $$ x_1^{(1)}, x_{\oo1}^{(1)}, x_2^{(1)}, x_{\oo2}^{(1)}, \ldots, x_n^{(1)}, x_{\oo{n}}^{(1)}, \ldots \ldots, x_1^{(c)}, x_{\oo1}^{(c)}, \ldots, x_{n}^{(c)} x_{\oo{n}}^{(c)} \quad \in M_{BC}({\bf 1})^{\oplus c}_n.$$
By assumption each variable has positive graded-degree, so the degree-$d$ monomials must have length at most $d$.
The subgroup $B_{n-a}$ acts on these monomials diagonally by signed permutations on the subscript indices $$\{ a+1, \oo{a+1}, \ldots, n, \oo{n} \}.$$ Superscripts are fixed. To bound the number of $B_{n-a}$ orbits of monomials, we will in fact bound the (larger) number of $S_{n-a}$ orbits; $S_{n-a}$ acts by permuting the set $\{ a+1, \ldots, n \}$ and simultaneously permuting the set $\{ \oo{a+1}, \ldots,\oo{n} \}$. We can classify $S_{n-a}$--orbits of these monomials as follows: Each monomial contains a (possibly empty) subword in the $(b+2ac)$ variables $$y^{(1)}, y^{(2)}, \ldots, y^{(b)} \quad \in M_{BC}({\bf 0})^{\oplus b}_n \qquad \text{and}$$ $$ x_1^{(1)}, x_{\oo1}^{(1)}, x_2^{(1)}, x_{\oo2}^{(1)}, \ldots, x_a^{(1)}, x_{\oo{a}}^{(1)}, \ldots \ldots, x_1^{(c)}, x_{\oo1}^{(c)}, \ldots, x_{a}^{(c)} x_{\oo{a}}^{(c)} \quad \in M_{BC}({\bf 1})^{\oplus c}_n$$ There are $ {(b+2ac)+d \choose (b+2ac)}$ monomials of length at most $d$ in the first $(b+2ac)$ variables. For fixed $a,b,c$ this formula grows polynomially in $d$.

Now, consider each of the remaining variables $x_{i}^{(j)}$ to be 'coloured' by one of the $(2c)$ symbols $x^{(j)}_+$ or $x^{(j)}_-$, where the sign represents whether $i$ is positive or negative, for $j=1, \ldots, c$.  Next we partition these coloured variables into multisets with one set for each subscript $i$. The action of $S_{n-a}$ preserves this decomposition. 

Concretely, for example, if $a=2$ then the $S_{n-a}$--orbit of the monomial
$$\left(y^{(2)}\right)^3y^{(4)}y^{(5)}\left(x_{\oo1}^{(1)}\right)^2x_2^{(3)}x_{\oo2}^{(4)}
x_{3}^{(2)}\left(x_{3}^{(4)}\right)^2 x_{\oo3}^{(3)} x_{\oo3}^{(4)} \left(x_{4}^{(1)}\right)^3x_{5}^{(5)}\left(x_{\oo5}^{(3)}\right)^2 x_{\oo6}^{(2)} x_{8}^{(1)} x_{8}^{(3)}  $$
would be represented by the monomial $\left(y^{(2)}\right)^3y^{(4)}y^{(5)}\left(x_{\oo1}^{(1)}\right)^2 x_2^{(3)}x_{\oo2}^{(4)}$ and the coloured partition $\{ x^{(2)}_+, x^{(4)}_+, x^{(4)}_+, x^{(3)}_-, x^{(4)}_- \}\cup \{x^{(1)}_+,x^{(1)}_+,x^{(1)}_+ \}\cup \{x^{(5)}_+, x^{(3)}_-,x^{(3)}_-\} \cup \{x^{(2)}_-\} \cup\{ x^{(1)}_+,x^{(3)}_+\}.  $

 Given monomials of $N$ variables, the number of coloured partitions we may obtain in this fashion is, in the notation of Proposition \ref{TNCSubexponential}, $T(N,2c)$, which we proved in Proposition \ref{TNCSubexponential} grows subexponentially in $N$ for each fixed $c$. 

To get an upper bound for the number of $B_{n-a}$--orbits for the monomials in $\Gamma^*$ of degree $d$, we will bound the number of $B_{n-a}$--orbit of monomials in at most $d$ variables. Some of these monomials will have degree less than $d$. If some variables anticommute, then some monomials will be zero in $\Gamma^d$ and some monomials will vanish on passing to coinvariants, so we obtain a strict overcount. 
\begin{align*}
\text{\#} & \text{ of $B_{n-a}$--orbits of monomials in at most $d$ variables} \\ =& \sum_{\substack{j,k \\ j+k \leq d}} \left( \begin{array}{c} \text{\# degree $j$ monomials}\\ \text{in } y^{(1)}, \ldots, x^{(c)}_{\oo{a}} \end{array}\right)\left( \begin{array}{c} \text{\# $B_{n-a}$-orbits of degree $k$ }\\ \text{monomials in remaining variables }\end{array} \right) \\
\leq& \left[ \sum_{\substack{j,k \\ j+k \leq d}} \left( \begin{array}{c} \text{\# degree $j$ monomials}\\ \text{in }  y^{(1)}, \ldots, x^{(c)}_{\oo{a}} \end{array}\right) \right]\left( \begin{array}{c} \text{\# $B_{n-a}$-orbits of degree $d$ }\\ \text{monomials in remaining variables }\end{array} \right) \\
\leq&   {(b+2ac)+d \choose (b+2ac)}T(d,2c)
\end{align*}
We thus obtain a bound that (for constant $a,b,c$) is independent of $n$ and subexponential in $d$. 

The proof in the case of the symmetric group is similar and slightly simpler: our $S_{n-a}$ orbits may be represented  by a submonomial in the $(b+ac)$ variables $$y^{(1)}, y^{(2)}, \ldots, y^{(b)} \quad \in M({\bf 0})^{\oplus b}_n \qquad \text{and}$$ $$ x_1^{(1)}, x_2^{(1)},  \ldots, x_a^{(1)},  \ldots \ldots, x_1^{(c)}, x_2^{(c)} \ldots, x_{a}^{(c)}  \quad \in M({\bf 1})^{\oplus c}_n$$ and a partition that is coloured by the $c$ colours $j=1, 2, \ldots c$. We obtain an asymptotic bound 
$$   {(b+ac)+d \choose (b+ac)}T(d,c)$$ that is independent of $n$ and subexponential in $d$. 		
	\end{proof}

\begin{proof}[Proof of Theorem \ref{LIMIT}]  By Lemma \ref{FGCHARALGSTAB}, the  limit in the numerator $\lim_{n \to \infty} \langle P_n, A^d_n \rangle_{\W_n}$ exists for every $d$ and moreover equals $\langle P_N, A^d_N \rangle_{\W_N}$ for $N$ sufficiently large. By Lemma \ref{SUBEXP}, there exist a  function $F_P(d)$   subexponential in $d$ such that 
		$$ |\lim_{n \to \infty} \langle P_n, A^d_n \rangle_{\W_n} | \leq F_P(d). $$ 
		It follows that the sum $$ \sum_{d=0}  \frac{\lim_{n \to \infty}\langle P_n, A^d_n \rangle_{\W_n}}{q^d} $$
		converges absolutely.
\end{proof}

\begin{rem} \label{RemCoFI} {\bf (Convergence for graded $\FIW^{\text{op}}$--algebras generated in low degree).} Recall the $\FIW$--module $V \cong M_{\W}({\bf 0})^{\oplus b} \oplus M_{\W}({\bf 1})^{\oplus c}$ of Theorem $\ref{LIMIT}$. The module $V$ has a natural $\FIW^\text{op}$--module structure   defined (in the notation of Definition \ref{DefnM(m)}) by the maps 
\begin{align*}
 M_{\W}({\bf 1})_{n+1} &\longrightarrow   M_{\W}({\bf 1})_{n}\\
   e_i &\longmapsto  \left\{ \begin{array}{ll} e_i, & |i|=1, \ldots, n
 \\ 0, & |i| = n+1.  \end{array} \right. 
\end{align*}
and the isomorphisms $ M_{\W}({\bf 0})_{n+1} \cong  M_{\W}({\bf 0})_{n}$. (See Church--Ellenberg--Farb \cite[Section 4.1]{CEF} and Wilson \cite[Section 3]{FIW2} for details on the structure of simultaneous $\FIW$ and $\FIW^{\text{op}}$--modules.) These maps endow $\Gamma^*$  with the structure of an $\FIW^\text{op}$--algebra. Since $\W_n$--representations are self-dual, the representations $\Gamma^*_n$ are isomorphic whether viewed as representations of End$_{\FIW}({\bf n}) \cong \W_n$ or End$_{\FIW^{\text{op}}}({\bf n}) \cong \W_n$. If $A$ is any $\FIW^\text{op}$--algebra subquotient of $\Gamma^*$, then in particular the bigraded piece $A^d_n$ is a subquotient of $\Gamma^d_n$, and Theorem \ref{LIMIT} applies: for any character polynomial $P$ and $q>1$ the following sum converges absolutely  
$$ \sum_{d=0}^{\infty}  \frac{ \lim_{n \to \infty} \langle P_n, A^d_n \rangle_{\W_n}}{q^d}. $$ 
\end{rem}

\begin{rem} \label{RemarkFarbWolfson}
After circulating a preprint of this paper we discovered that in \cite[Theorem 3.2 and Corollary 3.3]{FarbWolfson}, Farb and Wolfson prove a result which is closely related to our Theorem \ref{LIMIT}. 

\begin{thm}[{\cite[Theorem 3.2 and Corollary 3.3]{FarbWolfson}}] \label{TheoremFarbWolfson}
Let $X$ be a connected space such that $\dim H^*(X; \Q) < \infty$. Then there exist constants $K, L > 0$ so that for
each $i \geq 0$, and for all $n \geq 1$, the Betti numbers of the $n$-fold symmetric product of $X$ are bounded subexponentially:
$$\dim H^i(\mathrm{Sym}^n(X); \Q) < Ke^{L \sqrt{i}}.$$
For each $0 \leq a \leq n$ exist constants $K, L > 0$ so that for
each $i \geq 0$, and for all $n \geq 1$, the coinvariants of the cohomology of $X^n$ are bounded subexponentially: 
$$\dim \Big( H^i(X^n; \Q)^{S_{n-a}}\Big) < Ke^{L \sqrt{i}}.$$
\end{thm}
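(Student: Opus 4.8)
The plan is to obtain the theorem from the K\"unneth theorem together with the elementary structure of $H^*(\mathrm{Sym}^n X;\Q)$, and to note along the way that the (weaker) ``subexponential'' conclusion also drops straight out of Lemma~\ref{SUBEXP}. First I would record the two standard identifications. Since $\dim_\Q H^*(X;\Q)<\infty$, the K\"unneth theorem gives a grading-preserving isomorphism $H^*(X^n;\Q)\cong H^*(X;\Q)^{\otimes n}$ on which $S_n$ acts by permuting the tensor factors (with Koszul signs); and for a finite group quotient, transfer over $\Q$ identifies $H^*(\mathrm{Sym}^n X;\Q)$ with $\big(H^*(X;\Q)^{\otimes n}\big)^{S_n}$ (Macdonald). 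More generally $H^i(X^n;\Q)^{S_{n-a}}$ is the subspace of $H^i(X;\Q)^{\otimes n}$ fixed by the subgroup permuting the last $n-a$ factors. In particular the symmetric-product bound is the case $a=0$ of the coinvariant bound, so it suffices to show: for each fixed $a\ge 0$ there are constants $K,L>0$ (depending on $X$ and $a$, not on $n$) with $\dim_\Q\big(H^i(X^n;\Q)^{S_{n-a}}\big)<Ke^{L\sqrt i}$ for all $i\ge 0$ and all $n\ge a$.

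For the precise bound I would exploit that $H^*(X;\Q)$ is concentrated in bounded cohomological degree, say degrees $\le N$. Write $H^*(X;\Q)=\Q\oplus\widetilde H$ with $\widetilde H:=H^{>0}(X;\Q)$, and $b_j:=\dim_\Q H^j(X;\Q)$. Expanding the tensor power along this splitting gives $H^*(X;\Q)^{\otimes n}=\bigoplus_{S\subseteq[n]}\widetilde H^{\otimes S}$, and taking $S_{n-a}$-invariants yields $\big(H^*(X;\Q)^{\otimes n}\big)^{S_{n-a}}\cong H^*(X;\Q)^{\otimes a}\otimes\bigoplus_{m=0}^{n-a}\big(\widetilde H^{\otimes m}\big)^{S_m}$. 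Since $\widetilde H^{\otimes m}$ sits in cohomological degrees $\ge m$, in any fixed degree $i$ only the terms with $m\le i$ contribute, so the degree-$i$ dimension is at most the $t^i$-coefficient of the rational function $$P_X(t)^a\cdot\prod_{1\le j\le N,\;j\text{ even}}(1-t^j)^{-b_j}\cdot\prod_{1\le j\le N,\;j\text{ odd}}(1+t^j)^{b_j},$$ where $P_X(t)=\sum_j b_j t^j$ is the Poincar\'e polynomial of $X$; a finite product of the form $\prod(1-t^{j})^{-1}$ has coefficients of polynomial growth, so this coefficient is bounded by a polynomial in $i$ whose degree and leading constant depend only on the $b_j$ and on $a$, and any polynomial is eventually dominated by $Ke^{L\sqrt i}$ (enlarging $K$ to absorb small $i$). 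This gives the claim uniformly in $n$. As a quicker route to the weaker ``subexponential'' conclusion, one can instead fix a homogeneous basis $\omega_1,\dots,\omega_c$ of $\widetilde H$ ($c:=\dim_\Q\widetilde H$) and realise $H^*(X;\Q)^{\otimes n}$ as an $S_n$- and $\FI$-equivariant, degree-preserving quotient of the free graded-commutative $\FI$-algebra $\Gamma^*$ on $M_S({\bf 1})^{\oplus c}$, with the $k$-th copy of $M_S({\bf 1})$ graded in degree $\deg\omega_k$ (the quotient map sends the $i$-th generator of the $k$-th copy to $1\otimes\cdots\otimes\omega_k\otimes\cdots\otimes 1$, with $\omega_k$ in slot $i$, and is onto because tensors of the $\omega_k$'s supported in distinct slots span); exactness of $(-)^{S_{n-a}}$ over $\Q$ then gives $\dim_\Q\big(H^i(X^n;\Q)^{S_{n-a}}\big)\le\dim_\Q\big((\Gamma^i_n)^{S_{n-a}}\big)$, which Lemma~\ref{SUBEXP} bounds by a function of $i$ alone that is subexponential in the sense of Section~\ref{SubsectionBackgroundAsymptotics}.

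The main obstacle is the passage from spaces to algebra in the first paragraph: for a general connected $X$ with $\dim_\Q H^*(X;\Q)<\infty$ one needs the K\"unneth isomorphism and, more delicately, the transfer identification $H^*(\mathrm{Sym}^n X;\Q)\cong\big(H^*(X;\Q)^{\otimes n}\big)^{S_n}$, which requires a mild point-set hypothesis on $X$ (it is automatic for CW complexes, the setting of \cite{FarbWolfson}) so that the quotient $X^n\to\mathrm{Sym}^n X$ admits a transfer; once this is in place the remaining steps are the elementary bookkeeping above, and one should double-check that the degree-$i$ estimate is genuinely uniform in $n$ (the only $n$-dependence is that the sum over $m$ or over subsets $S$ is truncated, which only decreases the count). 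A secondary point worth flagging: Lemma~\ref{SUBEXP} on its own does \emph{not} recover the stated exponent $\sqrt i$, since its orbit count is of order $(\text{poly in }i)\cdot T(i,c)$ and, by Proposition~\ref{TNCSubexponential} and the discussion there, $T(i,c)$ outgrows $e^{O(\sqrt i)}$ once $c\ge 2$ --- the orbit count discards exactly the fact that $H^*(X;\Q)$ has bounded cohomological degree --- so it is the first-paragraph reduction together with the generating-function estimate of the second paragraph, rather than Theorem~\ref{LIMIT} directly, that yields the sharp form.
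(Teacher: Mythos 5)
Your argument is sound, but note first that the paper you were asked to match does not actually prove Theorem \ref{TheoremFarbWolfson}: it is quoted from Farb--Wolfson, and Remark \ref{RemarkFarbWolfson} only records that their proof goes through Macdonald's formula for the Poincar\'e polynomial of symmetric products together with a complex-analytic study of that generating function, adding that Theorem \ref{LIMIT} ``should'' recover the subexponential growth but not the precise asymptotics. Your proposal is essentially a self-contained, elementary version of the Farb--Wolfson route: the K\"unneth-plus-transfer identification $H^*(\mathrm{Sym}^n X;\Q)\cong\big(H^*(X;\Q)^{\otimes n}\big)^{S_n}$ is exactly the content of Macdonald's result, and your splitting $H^*(X;\Q)=\Q\oplus\widetilde H$ with the truncation $m\le i$ replaces their analytic estimate by a direct coefficient bound on the finite product $P_X(t)^a\prod_j(1-t^j)^{-b_j}\prod_j(1+t^j)^{b_j}$, which in fact yields \emph{polynomial} growth in $i$, uniformly in $n$ --- strictly stronger than the stated $Ke^{L\sqrt i}$ (so the constants $K,L$ are obtained trivially). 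What your elementary estimate buys is the avoidance of complex analysis; what the original argument buys is sharper asymptotic information about the generating function, which your crude truncation does not attempt to reproduce. Your secondary observation is also correct and matches the paper's own caveat: the quotient-of-$\Gamma^*$ argument via Lemma \ref{SUBEXP} only controls orbit counts of size roughly $T(i,c)$, and by the discussion around Proposition \ref{TNCSubexponential} this exceeds $e^{O(\sqrt i)}$ once $c\ge 2$, so that route gives subexponential bounds but not the exponent $\sqrt i$; the uniform bound on the cohomological degree of $X$, which the orbit count forgets, is precisely what your generating-function step exploits. The only hypotheses to keep explicit are the mild point-set conditions needed for the transfer identification, which you flag correctly and which hold in the CW setting of the cited theorem.
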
 

Farb--Wolfson use methods that are quite different from those of Theorem \ref{LIMIT}: their proof uses a result of Macdonald on the Poincar\'e polynomial of symmetric products \cite{MacDonaldSymmetricProduct}, and an analysis of this generating functions drawing on complex analysis. 

It should be possible to deduce the subexponential growth of the Betti numbers and their coinvariants in Theorem \ref{TheoremFarbWolfson} from Theorem \ref{LIMIT} and its proof; it would take additional work to conclude their precise asymptotic formulas. Conversely, by applying Theorem \ref{TheoremFarbWolfson} with appropriately chosen spaces $X$ and using the K\"unneth Formula, it should be possible to deduce Theorem \ref{LIMIT} in the case of graded-commutative algebras in type A. With some additional work it should be possible to leverage these results  to give an alternate route to proving Theorem \ref{LIMIT} in all cases. 
\end{rem}

\section{Point counts for maximal tori in type B and C}\label{MAXTORI} 

In this section we obtain the type B and C analogues of \cite[Theorem 5.6]{CEF}, a convergence result for certain statistics on tori in $\GL_n(\overline{\F_q})$. We combine a theorem of Lehrer with representation stability results of the second author and our discussion in Section \ref{ASYMSection} to compute asymptotic counts for $\Fr_q$-stable maximal tori for the symplectic and special orthogonal groups.

\subsection{Maximal tori in types B and C} \label{SectionMaxToriBackground}
Let $\k$ be an algebraically closed field. We denote by $\G$ one of the following two semisimple connected linear algebraic groups defined over $\k$: the symplectic group $\Sp_{2n}$ (type C) or the special orthogonal group $\SO_{2n+1}$ (type B). In the discussion below we identify the algebraic group $\G$ with its set of $\k$-rational points $\G(\k)$.  Recall that the symplectic group  is the algebraic group defined over $\k$ by
$$\Sp_{2n}(\k)=\{A\in \GL_{2n}(\k): A^TJA=J\},$$
where $J= \left[ {\begin{array}{cc} 0 & I_n \\ -I_n & 0 \\ \end{array} } \right]$ is the matrix associated to the standard symplectic form $$\omega(\vec{u},\vec{v})=u_1v_{n+1}+\ldots +u_nv_{2n}-u_{n+1}v_1-\ldots -u_{2n}v_n,\text{\  \ \ for\ \ \  } \vec{u},\vec{v}\in \k^{2n}.$$  The special orthogonal group  is defined as
$$\SO_{2n+1}(\k)=\{A\in \SL_{2n+1}(\k): A^TQA=Q\},$$
where $Q= \left[ {\begin{array}{ccc}1& 0 & 0\\ 0& 0 & I_n \\ 0&I_n &0 \\ \end{array} } \right]$ is the matrix associated to the quadratic form of Witt index $n$  $$q(u_0,u_1,\ldots,u_{2n})=u_0^2+u_1u_{n+1}+\ldots+u_nu_{2n},\text{ \ \ \ for\ \  \ \ }(u_0,u_1,\ldots,u_{2n})\in \k^{2n+1}.$$

A subgroup of $\G$ is called a {\it torus} if it is $\k$-isomorphic to a product of multiplicative groups $\mathbb{G}_m:=\GL_1$. 
For instance, we define standard maximal tori $T_0$ to be the diagonal subgroups $$T_0:=\big\{\text{diag}(\lambda_1,\ldots,\lambda_n,\lambda_1^{-1},\ldots,\lambda_n^{-1}): \lambda_i\in \k^\times\big\} \subseteq \Sp_{2n}(\k)$$ and $$T_0:=\big\{\text{diag}(1,\lambda_1,\ldots,\lambda_n,\lambda_1^{-1},\ldots,\lambda_n^{-1}): \lambda_i\in \k^\times\big\} \subseteq \SO_{2n+1}(\k).$$ In both cases, the maximal torus $T_0$ has dimension $n$.  

Let $\mathcal{T}$ denote the set of maximal tori of $\G$. Since all maximal tori in $\G$ are conjugate (\cite[Proposition 1.1]{SRI}), the action of $\G$ on $\mathcal{T}$ by conjugation is transitive and $\mathcal{T}\cong \G/N(T_0),$ where $N(T_0)$ is the normalizer of the torus $T_0$. In type B and C, the Weyl group $W(T_0):=N(T_0)/T_0$ is isomorphic to the hyperoctahedral group $B_n$. The group $B_n$ acts on matrices in $T_0$ by conjugation by permuting the $n$ eigenvalues $\lambda_1,\ldots,\lambda_n$ and transposing the inverse pairs $\lambda_i $ and $ \lambda_i^{-1}$.

\subsubsection{The action of Frobenius} \label{SubsectionFrobAction}

Let $q$ be an integral power of a prime $p$ and let $\k=\overline{\F_p}$. The standard Frobenius morphism $\Fr_q$  acts on a matrix $(x_{ij}) \in \G= \G(\overline{\F_q})$ by $\Fr_q:(x_{ij})\mapsto (x_{ij}^q)$. The set of fixed points $\G^{\Fr_q}:=\{g\in\G: \Fr_q(g)=g\}$ corresponds to the $\F_q$-points $\G(\F_q)$ of $\G$: the finite groups $\Sp_{2n}(\F_q)$ and $\SO_{2n+1}(\F_q)$.  
A maximal torus of $\G^{\Fr_q}$ is a  subgroup of $\G(\F_q)$ of the form $T^{\Fr_q}=\{g\in T: \Fr_q(g)=g\}$  for some $\Fr_q$-stable maximal torus $T$ of $\G$. In particular, since $T_0$ is $\Fr_q$-stable,

$$T_0^{\Fr_q}=\big\{\text{diag}(\lambda_1,\ldots,\lambda_n,\lambda_1^{-1},\ldots,\lambda_n^{-1}): \lambda_i\in \F_q^\times\big\}\text{ is a maximal torus of } \Sp_{2n}(\F_q)$$
and 
$$T_0^{\Fr_q}=\big\{\text{diag}(1,\lambda_1,\ldots,\lambda_n,\lambda_1^{-1},\ldots,\lambda_n^{-1}): \lambda_i\in \F_q^\times\big\}\text{ is a maximal torus of }\SO_{2n+1}(\F_q).$$
Other examples of $\Fr_q$-stable maximal tori in $\Sp_2(\overline{\F_p})$ are given in Example \ref{ToriExample} below. 

An  $\Fr_q$-stable torus  of $\G$  is defined over $\F_q$. We say that such a torus $T$ {\it splits over $\F_q$} if $T$ is $\F_q$-isomorphic to a product of multiplicative groups $\mathbb{G}_m$.  There is always a finite Galois extension of $\F_q$ over which a  given torus becomes diagonalizable,
hence a split torus. The  maximal torus $T_0$  above splits over $\F_q$ (or any field) and  the group $\G$ is said to be a {\it split} algebraic group. We will investigate statistics on the set $\mathcal{T}^{\Fr_q}$ of $\Fr_q$-stable maximal tori of $\G$. For an introduction to split reductive groups and maximal tori we refer the reader to \cite[Chapter I]{MILNE2} and \cite[Chapters 1 $\&$ 3]{CARTER}. See also Niemeyer--Praeger \cite[Section 3]{NP} for a description of the maximal tori in finite classical groups $\G^{\Fr_q}$  of Lie type.

\subsubsection{$\Fr_q$-stable maximal tori and characters of the Weyl group} \label{SubsectionToriPolys}

The $\G(\F_q)$-conjugacy classes in $\mathcal{T}^{\Fr_q}$ correspond to conjugacy classes in the Weyl group $B_n$. Lehrer observed  that this implies that, in principle, functions which are defined on $\Fr_q$-stable maximal tori (for instance, functions which count rational tori) may be described in terms of the character theory of $B_n$. 
This correspondence between $\G(\F_q)$-conjugacy classes  in $\mathcal{T}^{\Fr_q}$ and conjugacy classes in $B_n$ is defined as follows. Consider a torus $T\in \mathcal{T}^{\Fr_q}$, so $T=gT_0g^{-1}$ for some $g\in \G$ . Since $$gT_0g^{-1}=T=\Fr_q(T)=\Fr_q(g)\Fr_q(T_0)\Fr_q(g)^{-1}=\Fr_q(g)T_0\Fr_q(g)^{-1}$$ it follows that $\big(g^{-1}\cdot\Fr_q(g)\big)\in N(T_0)$. We denote by $w_T$ the element in the Weyl group $W(T_0) \cong B_n$ that is given by the projection of $\tilde{w}_T=g^{-1}\cdot\Fr_q(g)$  onto the quotient $W(T_0) = N(T_0)/T_0$. Since $\G$ is split, it turns out that each $\Fr_q$-stable maximal torus  determines  a conjugacy class in the Weyl group $B_n$ (\cite[Proposition 3.3.2]{CARTER}). 
Hence, given a class function $\chi$ on $B_n$, we can regard $\chi$ as a function on $\Fr_q$-stable maximal tori of $\G$, by defining
$$\chi(T):=\chi(w_T),\ \ \ \ \text{for }T\in\mathcal{T}^{\text{F}_q}.$$
This correspondence between conjugacy classes of tori and conjugacy classes in the Weyl group is illustrated in Example \ref{ToriExample} for  $\Sp_2(\overline{\F_p})$. 

\begin{example} {\bf ($\Fr_q$-stable maximal tori of $\Sp_2$).} \label{ToriExample}
For the algebraic group $\Sp_2(\overline{\F_p})=\mathrm{SL}_2(\overline{\F_p})$, the $\Fr_q$-stable maximal torus  $$T_0=\left\{ \left[ {\begin{array}{cc}\lambda_1& 0 \\ 0& \lambda_1^{-1} \end{array} } \right] \; \middle| \; \lambda_1\in\overline{\F_p}^\times\right\}$$
 is a split torus corresponding to the identity  coset $\left[ {\begin{array}{cc}1& 0 \\ 0& 1 \end{array} } \right]T_0$ in the Weyl group $W(T_0)$, that is, the identity element of the Weyl group $B_1$. On the other hand, given an element $\epsilon\in\F_q^\times$ which is not a square in $\F_q$, consider the abelian subgroup of $\SL_2(\overline{\F_p})$
$$T_\epsilon=\left\{ \left[ {\begin{array}{cc}x& y \\ \epsilon y& x \end{array} } \right]  \; \middle| \;  x,y\in\overline{\F_p},\; x^2-\epsilon y^2=1\right\}.$$
Choose a square root $\sqrt{\epsilon} \in \overline{\F_p} $ of $\epsilon$. If we take $g=\frac{1}{2\sqrt{\epsilon}} \left[ {\begin{array}{ccc}1& -1  \\ \sqrt{\epsilon}& \sqrt{\epsilon} \end{array} } \right] \in \SL_2(\overline{\F_p})$, then 
$$g^{-1}\cdot\left[ {\begin{array}{cc}x& y \\ \epsilon y& x \end{array} } \right]\cdot g= \left[ {\begin{array}{ccc}x+y\sqrt{\epsilon}& 0 \\ 0& x-y\sqrt{\epsilon}$$ \end{array} } \right],$$
therefore $g^{-1}T_\epsilon g= T_0$ and $T_\epsilon$ is a $\Fr_q$-stable maximal torus.
The matrix
$$ \tilde{w}_{T_{\epsilon}}=g^{-1}\cdot\Fr_q(g) = \frac{1}{2\sqrt{\epsilon}} \begin{bmatrix} \sqrt{\epsilon} &1 \\ -\sqrt{\epsilon} &1 \end{bmatrix} \begin{bmatrix}1 &-1 \\ -\sqrt{\epsilon} &-\sqrt{\epsilon} \end{bmatrix} = \begin{bmatrix}0 &-1 \\ -1 &0 \end{bmatrix} $$ 
projects to the coset $\left[ {\begin{array}{ccc}0& 1 \\ 1& 0 \end{array} } \right] T_0$ in the Weyl group $W(T_0)$. Hence $w_{T_\epsilon}$ is the negative 1-cycle $(1\ \overline{1})\in B_1$.
\end{example}

\subsubsection{The coinvariant algebra and statistics on maximal tori}  \label{SubsectionCoinvariantAlg}

There is a beautiful connection between the conjugacy-invariant functions $\chi(T)$ on the maximal tori of $\G$ defined over $\F_q$, and the topological structure of certain complex varieties related to $\G(\C)$. We now describe the complex cohomology algebra of these varieties, the generalized complete complex flag manifolds in type B and C. 

Let $V \cong \C^n$ denote the canonical complex representation of the hyperoctahedral group $B_n$ by signed permutation matrices. Then the symmetric powers Sym$(V)$ are isomorphic to $\C[x_1, \ldots, x_n]$, and we denote by $I_n$ the homogeneous ideal generated by the $B_n$--invariant polynomials with constant term zero.  The  \emph{complex type B/C coinvariant algebra} is defined as

$$R_n \cong \C[x_1, \ldots, x_n] / I_n $$
Let $R^{d}_n$ denote the $d^{th}$ graded piece of $R_n$.\\

Borel  \cite{Borel1953} proves that $R^*_n$ is isomorphic as a graded $\C[B_n]$-algebra to the cohomology of the {\it generalized complete complex flag manifolds} in type $B$  and $C$. The cohomology groups are supported in even cohomological degree; this isomorphism multiplies the grading by $2$. We recall the definition of these varieties: in type $B$, let $\B^B_n$ be a Borel subgroup of $\SO_{2n+1}(\C)$. Then the associated generalized flag manifold is $$\SO_{2n+1}(\C) / \B^B_n=\{0\subseteq V_1\subseteq\ldots\subseteq V_{2n+1}=\C^{2n+1}|\ \dim_\C V_m=m,\  q(V_i,V_{2n+1-i})=0\},$$
the variety of complete flags equal to their orthogonal complements. In type $C$,  take a Borel subgroup  $\B^C_n$ of $\Sp_{2n}(\C)$. The associated generalized flag manifold is $$ \Sp_{2n}(\C)/ \B^C_n=\{0\subseteq V_1\subseteq\ldots\subseteq V_{2n}=\C^{2n}|\ \dim_\C V_m=m,\ \omega(V_i,V_{2n-i})=0\},$$ the variety of complete flags equal to their symplectic complements.

A result of Lehrer \cite[Corollary  1.10']{LEHRER_TORI} relates the characters of $R^d_n$ to the space of maximal tori.  In the case of the split reductive groups   $\Sp_{2n}$ and $\SO_{2n+1}$ defined over $\overline{\F_p}$  with $2n^2$ roots, Lehrer's result specializes to the following formula. 

\begin{thm}[Corollary  1.10' \cite{LEHRER_TORI}]\label{LEHFOR}
	Let $q$ be  an integral power of a prime $p$ and let $\G$ be the linear algebraic group $\Sp_{2n}(\overline{\F_p})$ or $\SO_{2n+1}(\overline{\F_p})$. If $\chi$ is a class function on $B_n$, then

	
	\begin{equation}\label{COUNT2} 
	\sum_{T\in\mathcal{T}^{\Fr_q}} \chi(T)=q^{2n^2}\sum_{d\geq 0}  	q^{-d} \big\langle\chi, R_n^d\big\rangle_{B_n},
	\end{equation}
	 where  $\mathcal{T}^{\Fr_q}$ is the set of $\Fr_q$-stable maximal tori of $\G$  and $R^d_n$ is the $d^{th}$-graded piece  of the complex coinvariant algebra $R_n$ in type B/C.	
\end{thm}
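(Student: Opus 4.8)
I would deduce this identity by combining the standard parametrization of $\Fr_q$-stable maximal tori of a split group by conjugacy classes of its Weyl group with Chevalley's structure theorem for the coinvariant algebra. Throughout write $W=W(T_0)\cong B_n$, let $V$ be the reflection representation of $W$ (of dimension $n$), let $N$ denote the number of positive roots of $\G$ (so $N=n^2$ in types $B_n$ and $C_n$, and the total number of roots is $2N=2n^2$), and recall that the degrees $d_1,\dots,d_n$ of a system of fundamental invariants satisfy $\sum_i d_i=N+n$ and $|\G^{\Fr_q}|=q^{N}\prod_{i}(q^{d_i}-1)$ (\cite{STEIN}, \cite[Chapter 3]{CARTER}). \emph{Step 1: reduce to a sum over $W$.} Since $\G$ is split, the $\G^{\Fr_q}$-orbits on $\mathcal{T}^{\Fr_q}$ are indexed by conjugacy classes $[w]$ of $W$, and for a torus $T_w$ in the class of $w$ one has $N_{\G}(T_w)^{\Fr_q}/T_w^{\Fr_q}\cong C_W(w)$, so the orbit has size $|\G^{\Fr_q}|/(|T_w^{\Fr_q}|\,|C_W(w)|)$ (\cite[Section 3.3]{CARTER}). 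Grouping the sum over $T$ by orbit and using $|[w]|=|W|/|C_W(w)|$ gives
\[
\sum_{T\in\mathcal{T}^{\Fr_q}}\chi(T)\;=\;\frac{|\G^{\Fr_q}|}{|W|}\sum_{w\in W}\frac{\chi(w)}{|T_w^{\Fr_q}|}.
\]

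\noindent \emph{Step 2: order of a twisted torus.} Identifying $V$ with the cocharacter lattice of $T_0$ tensored with $\mathbb{R}$, one has $|T_w^{\Fr_q}|=\det\!\big(q\cdot\mathrm{id}-w\mid V\big)=q^{n}\det\!\big(\mathrm{id}-q^{-1}w\mid V\big)$. \emph{Step 3: graded character of $R_n$.} By Chevalley's theorem $\mathrm{Sym}(V)$ is a free module over $\mathrm{Sym}(V)^{W}=\C[f_1,\dots,f_n]$ with $\deg f_i=d_i$, and $\mathrm{Sym}(V)\cong R_n\otimes_{\C}\mathrm{Sym}(V)^{W}$ as graded $W$-modules (with $W$ acting trivially on $\mathrm{Sym}(V)^{W}$). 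Taking graded traces of $w$ and using the elementary identity $\sum_{d\ge0}t^{d}\,\mathrm{tr}\big(w\mid\mathrm{Sym}^{d}V\big)=\det(\mathrm{id}-tw\mid V)^{-1}$ yields
\[
\sum_{d\ge0}t^{d}\,\mathrm{tr}\big(w\mid R_n^{d}\big)\;=\;\frac{\prod_{i=1}^{n}(1-t^{d_i})}{\det(\mathrm{id}-tw\mid V)}.
\]

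\noindent \emph{Step 4: assemble.} Averaging Step~3 over $W$ against the class function $\chi$ at $t=q^{-1}$ and substituting Step~2, the factor $q^{n}\prod_i(1-q^{-d_i})$ is independent of $w$ and pulls out:
\[
\sum_{d\ge0}q^{-d}\big\langle\chi,R_n^{d}\big\rangle_{B_n}\;=\;\frac{q^{n}\prod_{i}(1-q^{-d_i})}{|W|}\sum_{w\in W}\frac{\chi(w)}{|T_w^{\Fr_q}|}.
\]
Comparing with Step~1 gives $\sum_{T}\chi(T)=\big(|\G^{\Fr_q}|\big/\big(q^{n}\prod_i(1-q^{-d_i})\big)\big)\sum_{d}q^{-d}\langle\chi,R_n^{d}\rangle_{B_n}$, and since $|\G^{\Fr_q}|=q^{N}\prod_i(q^{d_i}-1)=q^{N+\sum_i d_i}\prod_i(1-q^{-d_i})=q^{2N+n}\prod_i(1-q^{-d_i})$, the prefactor collapses to $q^{2N}=q^{2n^2}$, which is the asserted formula.

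\noindent \emph{Main obstacle and remarks.} The two places where genuine content enters are the orbit-size computation in Step~1 — in particular identifying $N_{\G}(T_w)^{\Fr_q}/T_w^{\Fr_q}$ with $C_W(w)$, which uses that $\G$ is split and $\Fr_q$ acts trivially on $W$ — and Chevalley's freeness theorem underlying Step~3; the remaining manipulations are bookkeeping with degrees and the count of positive roots. One should also take care with grading conventions: Borel's isomorphism between $R_n^{*}$ and the cohomology of the type $B/C$ flag variety doubles degrees, but Lehrer's formula is stated intrinsically in terms of $R_n^{d}$, so no factor of two appears in the statement itself. Finally, an alternative and more conceptual route, closer to Lehrer's original argument, would replace Steps~1--2 by applying the Grothendieck--Lefschetz trace formula to a suitable variety parametrizing maximal tori and comparing its cohomology with the $W$-equivariant cohomology of the flag variety; this avoids the explicit finite-group-of-Lie-type order formulas at the cost of invoking étale cohomology.
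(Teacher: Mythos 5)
Your derivation is correct, but note that the paper does not prove this statement at all: it is quoted verbatim as Lehrer's Corollary 1.10${}'$, specialized to the split groups $\Sp_{2n}$ and $\SO_{2n+1}$ (which have $2n^2$ roots), so there is nothing in the paper to compare against step by step. What you have written is the standard elementary derivation — orbit-counting over $\G^{\Fr_q}$-classes of tori indexed by conjugacy classes of $W$, the order formula $|T_w^{\Fr_q}|$ as a characteristic polynomial of $w$ on the cocharacter lattice, the Chevalley--Solomon graded character formula $\sum_d t^d\,\mathrm{tr}(w\mid R_n^d)=\prod_i(1-t^{d_i})/\det(1-tw\mid V)$, and Steinberg's order formula to collapse the prefactor to $q^{2N}=q^{2n^2}$ — and all the bookkeeping checks out (degrees $2,4,\dots,2n$, $\sum d_i=N+n$, $N=n^2$). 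This is genuinely different in flavor from Lehrer's own argument, which you correctly identify as the alternative: he works with the $\ell$-adic cohomology of the variety of maximal tori via the Lefschetz trace formula and Borel's description of $H^*(\G/\B)$; your route trades that machinery for the explicit finite-group-of-Lie-type order formulas. Two small points to be careful about if you write this up: (i) the torus order is literally $|T_w^{\Fr_q}|=|\det(qw-\mathrm{id}\mid Y(T_0)\otimes\R)|$, which equals $\det(q\cdot\mathrm{id}-w^{-1}\mid V)$ and hence $\det(q\cdot\mathrm{id}-w\mid V)$ because $w$ and $w^{-1}$ have the same characteristic polynomial on the real reflection representation and all factors $q-\zeta$ pair into positive real numbers for $q>1$; and (ii) the inner product $\langle\chi,R_n^d\rangle_{B_n}$ involves complex conjugation of one argument, which is harmless here only because the characters of $B_n$ are rational-valued.
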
  

We remark that, in this formula, $R_n^d$ is generated in graded degree $d=1$, and not (as with the cohomology algebra) supported only in even degrees.

Formula (\ref{COUNT2}) in Theorem \ref{LEHFOR} describes a deep relationship between the maximal tori over finite fields and the cohomology of the topological spaces related to $\G(\C)$; with these results we will (in Sections \ref{SectionAsymptoticResults} and \ref{SectionConcreteStatistics}) relate representation-theoretic stability results on the coinvariant algebras to stability results for point-counts on our varieties over finite fields.

\subsubsection{Character polynomials and interpreting statistics} 

When $\chi$ is a character polynomial, the left-hand side of Formula (\ref{COUNT2}) is, in a sense, quantifying the numbers and decompositions of maximal tori over $\F_q$. To make this precise, we first consider the statistics for $\Fr_q$-stable maximal tori in $\Sp_{2n}(\overline{\F_q})$ and  $\SO_{2n+1}(\overline{\F_q})$ that correspond to the class functions $X_r$ and $Y_r$.

Let $T\in \mathcal{T}^{\Fr_q}$ as before with $T_0=g^{-1}Tg$ for $g\in\G$.
Consider the ordered set of eigenvectors $v_1,\ldots,v_n,\overline{v}_1,\ldots,\overline{v}_n$ of $T$  given by  the columns of $g$. Since $T$ is defined over $\F_q$,  the Frobenius morphism $\Fr_q$ takes eigenvectors to eigenvectors. Then $\Fr_q$ acts on the set of lines in $\overline{\F_q}^{2n}$ $$\mathbf{L}_T=\{L_1,\ldots,L_n,\overline{L_1},\ldots,\overline{L_n}\}  \qquad \text{where $L_i:=$ span$_{\overline{\F_q}}(v_i)$ and $\overline{L_i}:=$ span$_{\overline{\F_q}}(\overline{v}_i)$.}$$ Moreover, since $\Fr_q(g)=g\tilde{w}_T$, the Frobenius morphism acts on $\mathbf{L}_T$ by the element $w_T\in B_n$ that permutes the lines $L_i$ and swaps pairs $L_i/ \overline{L_i}$.

Let $T\in\mathcal{T}^{\text{F}_q}$.  For each positive or negative $r$-cycle of  $w_T$, we can consider its corresponding support  $\{L_{i_1},\ldots, L_{i_r},\overline{L}_{i_1},\ldots, \overline{L_{i_r}}\}$ in $\mathbf{L}_T$ and the subtorus of $T_0$
$$T_{0_r}:=\big\{\text{diag}(1,\ldots\lambda_{i_1},\ldots,\lambda_{i_r},\ldots,1,1,\ldots\lambda_{i_1}^{-1},\ldots,\lambda_{i_r}^{-1},\ldots,1): \lambda_{i_j}\in \overline{\F_q}^\times\big\}.$$

Then $T_r:=gT_{0_r}g^{-1}$ is an $\Fr_q$-stable $r$-dimensional subtorus of $T$ irreducible  over $\F_q$. A torus defined over a field $\k$ is {\it irreducible}  if it is not isomorphic over $\k$ to a product of tori. Furthermore:

\begin{itemize}
	\item If the orbit  $\{L_{i_1},\ldots, L_{i_r},\overline{L_{i_1}},\ldots, \overline{L_{i_r}}\}$ corresponds to a positive $r$-cycle of $w_T$, then $(\Fr_q)^r=\Fr_{q^r}$ fixes each $L_{i_j}$ and each $\overline{L_{i_j}}$ .  This means that there is a matrix $g_r$  with entries in $\F_{q^r}$ 
	such that $g_r^{-1}T_r g_r=T_{0_r}$ and the subtorus $T_r$ splits over $\F_{q^r}$. 
	
	\item If the orbit  $\{L_{i_1},\ldots, L_{i_r}, \overline{L_{i_1}},\ldots,  \overline{L_{i_r}}\}$ corresponds to a negative $r$-cycle of $w_T$, then $(\Fr_q)^r=\Fr_{q^r}$ swaps the lines $L_{i_j}$ and $\overline{L}_{i_j}$.  This implies that no matrix $g_r$  with entries in $\F_{q^r}$ 
	 diagonalizes the subtorus $T_r$ and hence  $T_r$ does not split over $\F_{q^r}$. 
	\end{itemize}

Therefore,  if $T\in\mathcal{T}^{\text{F}_q}$,  for $r\geq 1$ the polynomial characters $X_r(T)$ and $Y_r(T)$  count the following:\\
	
\noindent $X_r(T) $ is the number of $r$-dimensional  $\Fr_q$-stable subtori of $T$ irreducible over $\F_q$ that split over $\F_{q^r}$,\\

	\noindent $Y_r(T)$ is the number of $r$-dimensional $\Fr_q$-stable subtori of $T$ irreducible over $\F_q$ that do not split over $\F_{q^r}$. \\

\noindent {\bf Example  \ref{ToriExample} continued} {\bf (Statistics for maximal tori in $\Sp_2$).} Recall the maximual torus $T_{\epsilon} \in \Sp_2(\overline{\F_p})$ from Example  \ref{ToriExample}: 
$$T_\epsilon=\left\{ \left[ {\begin{array}{cc}x& y \\ \epsilon y& x \end{array} } \right]  \; \middle| \;  x,y\in\overline{\F_p},\; x^2-\epsilon y^2=1\right\},$$ 
The torus $T_\epsilon$ is diagonalized by the matrix $\frac{1}{2\sqrt{\epsilon}} \left[ {\begin{array}{ccc}1& -1  \\ \sqrt{\epsilon}& \sqrt{\epsilon} \end{array} } \right]$ in $\Sp_2(\overline{\F_p})$, and is invariant under the action of Frobenius. It corresponds to the nontrivial conjugacy class $w_{T_{\epsilon}} = (1 \, \oo1) \in B_1$. 

The Frobenius morphism $\Fr_q$ acts by transposing the eigenspaces
		$$L_1=\text{span}_{\overline{\F_p}}\left(\left[ {\begin{array}{c}1 \\ \sqrt{\epsilon} \end{array} } \right]\right)\text{ \  \  and  \  \  }\overline{L}_1=\text{span}_{\overline{\F_p}}\left(\left[ {\begin{array}{c}-1 \\ \sqrt{\epsilon} \end{array} } \right]\right).$$ 
	Correspondingly, no matrix that diagonalizes $T_\epsilon$ can have entries in $\F_q$. The one--dimensional torus $T_\epsilon$ is irreducible and does not split over $\F_{q}$. This is consistent with its statistics $$X_1(T_{\epsilon})=X_1((1\, \oo1))=0 \qquad \text{ and } \qquad Y_1(T_{\epsilon})=Y_1((1\, \oo1))=1.$$

\subsection{Asymptotic results} \label{SectionAsymptoticResults}

In this section we prove Theorem \ref{ASYM_TORI}, a stability result for asymptotic polynomial statistics on $\Fr_q$-stable maximal tori of the symplectic and special orthogonal groups.

\begin{thm}[\bf Stability of maximal tori statistics]\label{ASYM_TORI}
		Let $q$ be an integral power of a prime $p$. For $n\geq 1$, denote by $\mathcal{T}_n^{\Fr_q}$ the set of $\Fr_q$-stable maximal tori for either  $\Sp_{2n}(\overline{\F_p})$ or  $\SO_{2n+1}(\overline{\F_p})$. Let  $R^d_m$ denote the $d^{th}$-graded piece  of the complex coinvariant algebra $R^*_m$ in type B/C.
 If $P \in \C[X_1, Y_1, X_2, Y_2, \ldots ]$ is any hyperoctahedral character polynomial, then the normalized statistic  $q^{-2n^2} \sum_{T \in \mathcal{T}_n^{\Fr_q}} P(T)$ converges as $n\to\infty$. In fact,  $$ \lim_{n \to \infty} q^{-2n^2} \sum_{T \in \mathcal{T}_n^{\Fr_q}} P(T) \; = \; \sum_{d=0}^{\infty} \frac{\lim_{m \to \infty} \langle P_m, R^d_m \rangle_{B_m} }{q^d},$$
and the series in the right hand converges. 
	\end{thm}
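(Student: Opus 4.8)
The plan is to combine the exact point-count formula of Lehrer (Theorem~\ref{LEHFOR}) with the convergence criteria established in Section~\ref{ASYMSection}, applied to the coinvariant algebras $R^*_n$. The key observation is that, in type B/C, the complex coinvariant algebra $R^*_n = \C[x_1,\dots,x_n]/I_n$ is a quotient of the polynomial ring $\C[x_1,\dots,x_n] \cong \mathrm{Sym}(V)$, where $V$ is the standard $B_n$-representation. In the language of Theorem~\ref{LIMIT}, the polynomial ring $\C[x_1,\dots,x_n]$ is precisely the free symmetric $\FIW$-algebra generated by $M_{BC}({\bf 1})$ (with generators in graded degree $1$), and $R^d_n$ is a $B_n$-equivariant quotient — hence in particular a subquotient — of its degree-$d$ piece $\mathrm{Sym}^d(V)$. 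So the family $\{R^d_n\}$ satisfies the hypotheses of Theorem~\ref{LIMIT} with $b=0$, $c=1$, taking the symmetric algebra.

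First I would invoke Lehrer's formula (Theorem~\ref{LEHFOR}) with $\chi = P_n$, which gives, for each fixed $n$, the identity
\begin{equation*}
q^{-2n^2} \sum_{T \in \mathcal{T}_n^{\Fr_q}} P(T) \;=\; \sum_{d \geq 0} q^{-d} \langle P_n, R^d_n \rangle_{B_n}.
\end{equation*}
Note the sum on the right is finite for each $n$ (since $R^*_n$ is finite-dimensional), so there are no convergence issues at this stage. Next I would invoke Theorem~\ref{LIMIT} applied to $\{R^d_n\}$: the limit $\ell_d := \lim_{m\to\infty}\langle P_m, R^d_m\rangle_{B_m}$ exists for every $d$ (this is also Lemma~\ref{FGCHARALGSTAB}, since $R^*$ is a finitely generated $\FIW$-algebra), and the series $\sum_{d} \ell_d q^{-d}$ converges absolutely. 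This gives the right-hand side of the claimed equation and its convergence.

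The remaining work is to justify the interchange of limits: to pass from $\lim_{n\to\infty}\sum_{d}q^{-d}\langle P_n, R^d_n\rangle_{B_n}$ to $\sum_{d}q^{-d}\lim_{m\to\infty}\langle P_m, R^d_m\rangle_{B_m}$. This is where the uniform subexponential bound from Lemma~\ref{SUBEXP} does the essential work. By that lemma, applied to the subquotient family $\{R^d_n\}$ of the free symmetric $\FIW$-algebra on $M_{\W}({\bf 1})$, there is a function $F_P(d)$, \emph{independent of $n$} and subexponential in $d$, with $|\langle P_n, R^d_n\rangle_{B_n}| \leq F_P(d)$ for all $n$ and $d$. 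Since $q > 1$, the bound $F_P(d)/q^d$ is summable and independent of $n$, so the dominated convergence theorem for series (i.e., the Weierstrass $M$-test combined with termwise convergence) lets us exchange the limit in $n$ with the sum over $d$. Combining this with the $n$-independent exact identity from Lehrer's formula yields
\begin{equation*}
\lim_{n\to\infty} q^{-2n^2}\sum_{T\in\mathcal{T}_n^{\Fr_q}} P(T) \;=\; \lim_{n\to\infty}\sum_{d\geq 0} q^{-d}\langle P_n, R^d_n\rangle_{B_n} \;=\; \sum_{d\geq 0} q^{-d}\lim_{m\to\infty}\langle P_m, R^d_m\rangle_{B_m},
\end{equation*}
which is exactly the assertion.

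I do not anticipate a serious obstacle here: the theorem is essentially an application of Theorem~\ref{LIMIT} once one identifies $R^*_n$ as a subquotient of a free symmetric $\FIW$-algebra generated in degree $1$ (which requires only knowing $R^*_n$ is a quotient of $\mathrm{Sym}(V)$ for $V$ the standard permutation representation). The one point requiring a little care is the legitimacy of swapping $\lim_n$ and $\sum_d$, but this is handled cleanly by the uniform (in $n$) subexponential bound of Lemma~\ref{SUBEXP}; without the uniformity this step would fail, and indeed the forthcoming counterexample in \cite{JIMWIL2} shows uniformity genuinely can break down for algebras generated in higher degree. A minor bookkeeping remark: since $R^d_n = 0$ for $d$ larger than the top degree of $R_n$, and the top degree grows with $n$, one should phrase the dominated-convergence step with the convention $\langle P_n, R^d_n\rangle_{B_n}$ simply being $0$ in that range, which is consistent with the bound $|\langle P_n, R^d_n\rangle_{B_n}| \leq F_P(d)$.
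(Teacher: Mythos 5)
Your proposal is correct and follows essentially the same route as the paper: Lehrer's formula, the uniform-in-$n$ subexponential bound of Lemma \ref{SUBEXP} applied to $\{R^d_n\}$ as subquotients of a free algebra generated in $\FIW$--degree one, and a dominated-convergence interchange of $\lim_n$ with $\sum_d$ (which the paper carries out as an explicit $\epsilon/2$ tail estimate using the stabilization degrees $D_d$ of Remark \ref{GEN1}). The only point to phrase carefully is that $R^*$ is an $\FI_{BC}^{\text{op}}$--algebra rather than an $\FI_{BC}$--algebra, so the existence of $\lim_{m}\langle P_m,R^d_m\rangle_{B_m}$ comes from the dual modules $\widehat{R^d_{\bullet}}$ and self-duality of $B_m$--representations (Remarks \ref{RemCoFI} and \ref{GEN1}) rather than from applying Lemma \ref{FGCHARALGSTAB} to $R^*$ directly.
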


To prove this theorem we first establish a convergence result for characters of coinvariant algebras of type B/C.

Let $C^*_n$ denote the complex polynomial algebra $\C[x_1, \ldots, x_n]$ on $n$ variables, with generators $x_i$ in graded degree $d=1$.  These polynomial rings form a complex $\FI_{BC}$--algebra under the natural inclusions $C^*_n \hookrightarrow C^*_{n+1} $, which is generated as an $\FI_{BC}$--algebra in $\FI_{BC}$--degree $n=1$ by $C^1_1 = \langle x_1 \rangle$. Our sequence of algebras is then generated by the $\FI_{BC}$--module of canonical signed permutation representations, 
$$C^1_{\bullet} \cong M_{BC}(\varnothing, \Y{1}\; )_{\bullet} := M_{BC}({\bf 1})_{\bullet} \otimes_{\C[B_1]} V_{(\varnothing,\; \Y{1}\;)} \cong \langle x_1, \ldots, x_{\bullet} \rangle.$$
The coinvariant algebra $R^*_n \cong C^*_n / I_n $ does not admit an $\FI_{BC}$--module structure; the $\FI_{BC}$--module structure on the polynomial rings $C^*_n$  does not respect the ideals $I_n$. Instead, the coinvariant algebras are $\C$--algebras over $\FI_{BC}^{\text{op}}$, induced by the maps  
\begin{align*}
C^*_{n+1} &\longrightarrow  C^*_n\\ 
 x_i &\longmapsto  \left\{ \begin{array}{ll} x_i, & i=1, \ldots, n
 \\ 0, & i = n+1.  \end{array} \right. 
\end{align*}
The reader may verify that the duals  $\widehat{R^d_{\bullet}}$ of each graded piece do form finitely generated sub--$\FI_{BC}$--modules of the dual modules $\widehat{C^d_{\bullet}} \cong C^d_{\bullet}$; see work of the second author \cite[Definition 5.14, Proposition 5.15, and Section 6]{FIW1} for details. For the following result, however, we do not need this $\FI_{BC}$--module structure; we only need the observation that for each $n$ and $d$, the $B_n$--representation $R^d_n$ is a subquotient of the homogeneous polynomial space $C^d_n$. 

\begin{prop} \label{CONVCOIN} Let $R^d_n$ the $d^{th}$--graded piece  of the complex type B/C coinvariant algebra $R_n$.   Then for any hyperoctahedral character polynomial $P \in \C[X_1, Y_1, X_2, Y_2, \ldots ]$ the following sum converges absolutely. $$ \sum_{d= 0}^\infty   \frac{ \lim_{n \to \infty}  \langle P_n, R^d_n \rangle_{B_n}}{q^d}.$$
\end{prop}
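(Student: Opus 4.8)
The plan is to deduce Proposition \ref{CONVCOIN} directly from Theorem \ref{LIMIT} by exhibiting each graded piece $R^d_n$ as a $B_n$--equivariant subquotient of the appropriate $\FIW$--algebra $\Gamma^d_n$. The key observation, already recorded in the paragraph preceding the statement, is that $R^d_n \cong C^d_n / (I_n \cap C^d_n)$ is a $B_n$--equivariant quotient of $C^d_n$, the degree-$d$ part of the polynomial ring $\C[x_1,\ldots,x_n]$; and $\C[x_1,\ldots,x_n]$ is precisely the free symmetric $\FI_{BC}$--algebra $\Gamma^*$ generated by the $\FI_{BC}$--module $V = M_{BC}(\mathbf{1})$ (with one copy of $M_{BC}(\mathbf{1})$, i.e. $b=0$, $c=1$, with the generators $x_1,\ldots,x_n$ placed in graded degree $1$). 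Here one should note that although $M_{BC}(\mathbf{1})_n$ has $\k$--basis indexed by $e_1, e_{\oo 1},\ldots, e_n, e_{\oo n}$, the relevant polynomial generators $x_1,\ldots,x_n$ span the $B_n$--subrepresentation where $e_{\oo i} = -e_i$, so strictly speaking $C^1_n$ is the quotient $M_{BC}(\mathbf{1})_n \big/ \langle e_i + e_{\oo i}\rangle$; since Theorem \ref{LIMIT} allows arbitrary subquotients of $\Gamma^d_n$, passing to this quotient is harmless.

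Concretely, the steps are: first, invoke Lemma \ref{FGCHARALGSTAB} (or Lemma \ref{CHARALGSTAB}) applied to the $\FI_{BC}$--algebra $C^*_{\bullet}$, which is generated in $\FI_{BC}$--degree $1$ by the finitely many generators $x_1$ in graded degree $1$, to conclude that $\lim_{n\to\infty}\langle P_n, R^d_n\rangle_{B_n}$ exists for each $d$ — wait, Lemma \ref{FGCHARALGSTAB} concerns the algebra itself, so strictly one applies it to $C^*$ to get the limit for $C^d_n$, and then uses that $R^d_n$ is a quotient together with the fact (Theorem \ref{ThmConstraintsfgmodules}) that the character of $R^d_{\bullet}$ is eventually a character polynomial of bounded degree, combined with Proposition \ref{STABPOL}, to get that $\langle P_n, R^d_n\rangle_{B_n}$ is eventually constant in $n$. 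Second, apply Theorem \ref{LIMIT} with $\W_n = B_n$, $b=0$, $c=1$, $\Gamma^*$ the free symmetric $\FI_{BC}$--algebra on $M_{BC}(\mathbf{1})$, and the collection $\{A^d_n\} = \{R^d_n\}$: since each $R^d_n$ is a $B_n$--equivariant subquotient of $\Gamma^d_n = C^d_n$, the theorem yields absolute convergence of $\sum_{d\geq 0} q^{-d}\lim_{n\to\infty}\langle P_n, R^d_n\rangle_{B_n}$ for any $q>1$.

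The proof is therefore essentially a short verification that the hypotheses of Theorem \ref{LIMIT} are met, so there is no serious obstacle — the only point requiring a little care is the identification of $\C[x_1,\ldots,x_n]$ as a (sub)quotient of the free symmetric $\FI_{BC}$--algebra on $M_{BC}(\mathbf{1})$ rather than on some other generating module, and making sure the graded-degree conventions (generators $x_i$ in graded degree $1$, which is positive, as required) match the hypotheses. One should also remark explicitly, as the surrounding text already does, that no deeper structural property of the coinvariant algebra — such as its decomposition via Stembridge's formula, or the fact that $R^*_n$ computes the cohomology of a flag variety — is used: convergence follows purely from $R^*_n$ being a quotient of a polynomial ring, i.e. from the $\FI_{BC}$--algebra $C^*$ being generated in $\FI_{BC}$--degree $\leq 1$ in positive graded degree.
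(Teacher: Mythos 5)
Your proposal follows the paper's own route: realize each $R^d_n$ as a $B_n$--equivariant subquotient of the degree-$d$ piece of the free (graded-)commutative $\FI_{BC}$--algebra generated in positive graded degree by a rank-one piece of $M_{BC}({\bf 1})$, and then invoke Theorem \ref{LIMIT}. (The paper takes $C^1_{\bullet}$ to be the \emph{submodule} of $M_{BC}({\bf 1})$ spanned by the $x_i := e_i - e_{\oo{i}}$ rather than your \emph{quotient} by $\langle e_i + e_{\oo{i}}\rangle$; in characteristic zero these are isomorphic $B_n$--representations and either description satisfies the subquotient hypothesis.) The one step to tighten is the existence of the limits $\lim_{n\to\infty}\langle P_n, R^d_n\rangle_{B_n}$: you appeal to Theorem \ref{ThmConstraintsfgmodules} for ``the character of $R^d_{\bullet}$,'' but that theorem requires a finitely generated $\FI_{BC}$--module, and $R^d_{\bullet}$ is not one --- the $\FI_{BC}$--maps on $C^*_{\bullet}$ do not carry $I_n$ into $I_{n+1}$, which is precisely why the paper equips $R^*_{\bullet}$ with an $\FI_{BC}^{\text{op}}$--algebra structure instead. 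The repair (this is the content of Remarks \ref{RemCoFI} and \ref{GEN1}) is to note that the duals $\widehat{R^d_{\bullet}}$ are finitely generated sub-$\FI_{BC}$--modules of $\widehat{C^d_{\bullet}} \cong C^d_{\bullet}$ of weight $\leq d$, hence have eventually polynomial characters, and that self-duality of $B_n$--representations transfers this to $R^d_n$; Proposition \ref{STABPOL} then gives the stabilization of $\langle P_n, R^d_n\rangle_{B_n}$. With that adjustment your argument is complete and coincides with the paper's.
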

\begin{proof} We have observed that $R^*_n$ is an $\FI^\text{op}_{BC}$--algebra quotient of the free graded-commutative algebra $C^*$ on $$M_{BC}(\varnothing, \Y{1})_{\bullet} := M_{BC}({\bf 1})_{\bullet} \otimes_{\C[B_1]} V_{(\varnothing, \; \Y{1} \;)} $$
The $\FI_{BC}$--module $M_{BC}(\varnothing, \Y{1}\;)_{\bullet}$ is, in the notation of Definition \ref{DefnM(m)}, the submodule of $M_{BC}({\bf 1})$ on the generators $x_i := e_i - e_{\overline{i}}$. By Remark \ref{RemCoFI}, the result follows from Theorem \ref{LIMIT}.
\end{proof}

\begin{rem}\label{COTYPEA}This result was proved by Church--Ellenberg--Farb in type A \cite[Theorem 5.6]{CEFPointCounting} using in part work of Chevalley \cite{ChevalleyInvariants} on the structure of the type A coinvariant algebra. Our Theorem \ref{LIMIT} allows for a combinatorial proof of \cite[Theorem 5.6]{CEFPointCounting} using only the fact that the coinvariant algebras are quotient of the dual of the free commutative $\FI$--algebra $\C[x_1, x_2, \ldots, x_n]$ generated by $M({\bf 1})_n = \langle x_1, x_2, \ldots, x_n \rangle$. This proof demonstrates that this convergence theorem does not depend on any deeper structural features of the coinvariant algebra. \end{rem}

	\begin{rem}\label{GEN1} Work of the second author \cite[Theorem 6.1 and Corollary 6.3]{FIW1} observes that, for each graded--degree $d$, the $B_n$--representations $\widehat{R^d_n}$ dual to $R^d_{n}$  assemble to form a finitely generated $\FI_{BC}$--module $\widehat{R^d_{\bullet}}$, a submodule of the sequence of homogenous polynomials $\widehat{C^d_{\bullet}}$. The $\FI_{BC}$--module $\widehat{C^d_{\bullet}}$, and hence $\widehat{R^d_{\bullet}}$, has weight at most $d$. Hence for each $d$ there is some $D_d \in \Z$ such that the characters of $\widehat{R^d_{\bullet}}$ are given by a character polynomial of degree $\leq d$  for all $n \geq D_d$ \cite[Theorem 4.16]{FIW2}. Since $B_n$--representations are self-dual, there is an isomorphism of representations $\widehat{R^d_n} \cong R^d_n$, and this character polynomial gives the characters of $R^d_n$ for all $n \geq D_d$.  It follows from Proposition \ref{STABPOL} that  
	$$  \lim_{m \to \infty} \langle P_m, R^d_m \rangle_{B_m} = \langle P_n, R^d_n \rangle_{B_n}, \qquad\text{ for any } n\geq \max\{D_d, d+\deg(P) \}.$$
\end{rem}

\begin{proof}[Proof of Theorem \ref{ASYM_TORI}]
We adapt the arguments given by Church--Ellenberg--Farb \cite[Theorem 3.13]{CEFPointCounting}. 
From  Lemma \ref{SUBEXP}, there exist a  function $F_P(d)$ independent of $n$ and subexponential in $d$ such that $|\langle P_n, R^d_n \rangle_{B_n}|\leq F_P(d)$ for all $n$. Then $$\left\vert \lim_{m\to\infty}\langle P_m, R^d_m \rangle_{B_m}\right\vert \leq F_P(d).$$ 
Let $\epsilon>0$. The series $\displaystyle \sum_{d\geq 0} \frac{F_P(d)}{q^d}$ converges absolutely, so  there exist some $I\in\mathbb{N}$ such that  $$\sum_{d\geq I+1} \frac{F_P(d)}{q^d}<\epsilon/2.$$ 
Using the notation from Remark \ref{GEN1}, we choose $N = \max\{D_1,D_2,\ldots, D_{I}, I+\deg(P) \}$. Then 

$$  \lim_{m \to \infty} \langle P_m, R^d_m \rangle_{B_m} = \langle P_n, R^d_n \rangle_{B_n}\qquad \text {for $d\leq I$ and n$\geq N$}.$$
From Proposition \ref{CONVCOIN}, the series $$\sum_{d=0}^{\infty} \frac{\lim_{m \to \infty} \langle P_m, R^d_m \rangle_{B_m} }{q^d}$$ converges to a limit $L<\infty$. On the other hand, by Theorem \ref{LEHFOR}, we have that 
$$ q^{-2n^2} \sum_{T \in \mathcal{T}_n^{\Fr_q}} P(T) \; = \; \sum_{d\geq 0} \frac{ \langle P_n, R^d_n \rangle_{B_n} }{q^d}.$$ 

Therefore, if $n\geq N$

\begin{align*}
	\left|L-  q^{-2n^2} \sum_{T \in \mathcal{T}_n^{\Fr_q}} P(T)\right|  &  =  \left|\sum_{d\geq I+1} \frac{ \lim_{m \to \infty} \langle P_m, R^d_m \rangle_{B_m} -\langle P_n, R^d_n \rangle_{B_n} }{q^d}\right|\\
	& \leq  \sum_{d\geq I+1} \frac{ F_P(d)+F_P(d) }{q^d} \\ & < \epsilon
\end{align*} 

\end{proof}

\subsection{Some statistics for $\Fr_q$-stable maximal tori in $\Sp_{2n}(\overline{\F_q})$ and  $\SO_{2n+1}(\overline{\F_q})$} \label{SectionConcreteStatistics}

In this section we compute some examples of statistics of the form given in Theorem \ref{ASYM_TORI}.
We use a result on the decomposition of the coinvariant algebra $R^*_n$ as a $B_n$--representation; see Stembridge \cite[Formula (2.2), Theorem 5.3]{StembridgeReflectionGroups}, analogous to the approach taken by Church--Ellenberg--Farb \cite[Theorems 5.8, 5.9, and 5.10]{CEFPointCounting}. These computations could also be accomplished with generating functions, using the approach of Fulman  \cite{FULMAN} in type A. \medskip

\noindent{\bf Decomposing the $B_n$--representation $R^d_n$.} The decomposition of the graded pieces $R^d_n$ into irreducible $B_n$--representations is described by Stembridge \cite{StembridgeReflectionGroups} in terms of their relationships to data called the \emph{fake degrees} of $B_n$. Given a double partition $\y=(\y^+, \y^-)$ of $n$, the multiplicity of irreducible representation $V_{\y}$ in $R^d_n$ is computed by the following formula. A \emph{double standard tableau} of shape $\y$ is a bijective labelling of the Young diagrams $\y^+$ and $\y^-$ with the digits $1$ through $n$, such that the numbers are strictly increasing in each row and column. Define the \emph{flag descent set} $D(T)$ of a double standard tableau $T$ as follows:  draw the tableaux of shape $\y^+$ and $\y^-$ in the plane, with the tableau of shape $\y^-$ placed above and to the right of tableau of shape $\y^+$. Then $$D(T) = \left\{ \quad j \quad \middle| \quad  (j+1) \text{ appears in a row below $j$} \quad  \right\}. $$
and the \emph{flag major index} of the double standard tableau $T$ is the quantity $$ f(T) = 2 \left( \sum_{ j \in D(T)} j \right) + |\y^-|.$$
Then the multiplicity of the irreducible representation $V_{\y}$ in the $d^{th}$-graded piece of the coinvariant algebra $R^d_n$ is given by the number of standard double tableaux of shape $\y$ and flag major index $d$. 

\begin{thm}[See Stembridge \cite{StembridgeReflectionGroups} Formula (2.2), Theorem 5.3] \label{ThmStembridge}
	$$	\langle R_n^d, V_{\y} \rangle_{B_n} = \#\left\{\quad T \quad \middle| \quad   \text{  $T$ standard double tableau of shape $\y$ with $f(T)=d$ } \quad  \right\}  .$$	
\end{thm}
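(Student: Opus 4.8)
The plan is to deduce this from the classical theory of fake degrees for the hyperoctahedral group, which expresses the graded multiplicities of irreducibles in the coinvariant algebra $R^*_n$ as descent statistics on standard tableaux. The starting point is the Chevalley--Shephard--Todd description: as a graded $B_n$-representation, $R^*_n$ is isomorphic to the regular representation, so each irreducible $V_{\y}$ occurs with total multiplicity $\dim V_{\y}$, and the generating function $\sum_d \langle R^d_n, V_{\y}\rangle_{B_n}\, t^d$ is the \emph{fake degree polynomial} $R_{\y}(t)$. I would cite Stembridge \cite{StembridgeReflectionGroups} (and, for the underlying combinatorics, the standard references on $B_n$-tableaux) for the formula $R_{\y}(t) = \sum_T t^{f(T)}$, the sum being over standard double tableaux of shape $\y$, with $f(T)$ the flag major index as defined just above the statement. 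Extracting the coefficient of $t^d$ then gives exactly the claimed count.

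The key steps, in order, are: (i) recall Borel's/Chevalley's identification of $R^*_n$ with the graded regular representation and reduce the claim to computing the fake degree polynomials $R_{\y}(t)$; (ii) recall the combinatorial model for irreducible $B_n$-representations by double partitions and the Young-basis description, so that the fake degrees can be read off from a descent statistic on the corresponding tableaux; (iii) invoke Stembridge's Theorem 5.3 together with Formula (2.2), which identifies the relevant statistic as the flag major index $f(T) = 2\sum_{j\in D(T)} j + |\y^-|$ on standard double tableaux, where $D(T)$ is the flag descent set obtained by placing the $\y^-$-tableau above and to the right of the $\y^+$-tableau; (iv) take the coefficient of $t^d$ to obtain $\langle R^d_n, V_{\y}\rangle_{B_n} = \#\{\, T \text{ of shape } \y : f(T) = d\,\}$.

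The main obstacle is purely expository rather than mathematical: one must be careful that the grading conventions match. Stembridge's results are often phrased so that the degree-$d$ piece of the coinvariant algebra corresponds to the \emph{major index} in one normalization; here the convention (inherited from Lehrer's formula, Theorem \ref{LEHFOR}) is that $R^d_n$ is the genuine degree-$d$ piece of $\C[x_1,\dots,x_n]/I_n$, with the $x_i$ in degree $1$, and \emph{not} the doubled grading coming from even cohomological degrees. So the verification that the statistic realizing this grading is precisely $f(T) = 2\sum_{j\in D(T)} j + |\y^-|$ — with the factor of $2$ on the descent contributions from the symmetric-group part and the additive $|\y^-|$ shift coming from the sign characters on the negative blocks — is the one place requiring genuine care. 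Once the normalization is pinned down, the statement is an immediate transcription of Stembridge \cite[Formula (2.2), Theorem 5.3]{StembridgeReflectionGroups}.
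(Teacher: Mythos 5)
Your proposal is correct and matches the paper's treatment: the paper does not prove this statement but simply cites Stembridge's fake degree formula (Formula (2.2) and Theorem 5.3), exactly as you do, and your remark about matching the grading convention (degree $1$ generators rather than the doubled cohomological grading) is the same caveat the paper flags after Theorem \ref{LEHFOR}. Nothing further is needed.
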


We can use Theorem  \ref{ThmStembridge} to compute some examples of the normalized statistics for maximal tori whose convergence is guaranteed by Theorem \ref{ASYM_TORI}.  A formula identifying stable sequences of irreducible $B_n$--representations $V_{\y}$ with a character polynomial $P$ is given in Wilson \cite[Theorem 4.11]{FIW2}. Some stable multiplicities that we compute  below, using Theorem  \ref{ThmStembridge}, are summarized in Table \ref{TableStableMultiplicities}. Dots represent zero. 

\begin{table} \caption{Some stable multiplicities of irreducible $B_n$--representations in the coinvariant alegebra}
\label{TableStableMultiplicities}
\begin{adjustbox}{max width=1.05\textwidth}

\begin{tabular}{|c || c || c | c | c | c | c | c | c | c | c | c | c | c |c| c| c| c| c|  } 
\hline &&&&&&&&&&&&&&&&& \\	
	Irreducible $B_n$ & Hyperocthahedral & $d=0$  &  1  & 2 & 3 & 4 & 5 & 6 & 7 & 8 & 9 & 10 & 11 & 12 & 13 & 14 & 15  \\
		representation & character polynomial &  &  &&&&&&&&&&&&&& \\
		&&&&&&&&&&&&&&&&&  \\
	\hline &&&&&&&&&&&&&&&&&  \\
	$ V((n), \varnothing)$  & 1 &  1  & $\cdot$ & $\cdot$ & $\cdot$ & $\cdot$ & $\cdot$ & $\cdot$ & $\cdot$ & $\cdot$ & $\cdot$ & $\cdot$  & $\cdot$ & $\cdot$ & $\cdot$ & $\cdot$ & $\cdot$  \\
	&&&&&&&&&&&&&&&&& \\	
	$V( (n-1), (1))$ & $ X_1 - Y_1 $& $\cdot$ &   1  & $\cdot$ & 1 & $\cdot$ & 1 & $\cdot$ & 1   & $\cdot$ & 1 & $\cdot$ & 1 & $\cdot$ & 1 & $\cdot$ & 1 \\
	&&&&&&&&&&&&&&&&& \\	
	$V( (n-1, 1), \varnothing)$ & $X_1 + Y_1-1 $ & $\cdot$ & $\cdot$ &   1  & $\cdot$ & 1 & $\cdot$ & 1 & $\cdot$ & 1   & $\cdot$ & 1 & $\cdot$ & 1 & $\cdot$ & 1 & $\cdot$  \\
	&&&&&&&&&&&&&&&&& \\	
	
	$V((n-2), (2))$  & $ \displaystyle { X_1 \choose 2} + { Y_1 \choose 2}  - X_1Y_1 +X_2-Y_2 $ &    $\cdot$ & $\cdot$ & 1 & $\cdot$ & 1  & $\cdot$ & 2 & $\cdot$ & 2 & $\cdot$ & 3 & $\cdot$ & 3 & $\cdot$ & 4 & $\cdot$   \\	
	&&&&&&&&&&&&&&&&& \\	
	
	$V((n-2), (1, 1))$  & $ \displaystyle { X_1 \choose 2} + { Y_1 \choose 2}  - X_1Y_1 -X_2+Y_2 $ &   $\cdot$  &  $\cdot$  & $\cdot$ & $\cdot$ & 1 & $\cdot$ & 1  & $\cdot$ & 2 & $\cdot$ & 2 & $\cdot$ & 3 & $\cdot$ & 3 & $\cdot$  \\
	&&&&&&&&&&&&&&&&& \\	
	$V((n-2, 1, 1), \varnothing)$  & $ \displaystyle  { X_1 + Y_1 \choose 2} - X_2 - Y_2 - X_1 - Y_1 +1$ &  $\cdot$ &  $\cdot$ &  $\cdot$  & $\cdot$  & $\cdot$ & $\cdot$ & 1 & $\cdot$ & 1  & $\cdot$ & 2 & $\cdot$ & 2 & $\cdot$ &3&$\cdot$  \\
	&&&&&&&&&&&&&&&&& \\	
	
	$V( (n-2,1), (1) )$ & $ X_1^2-Y_1^2-2X_1+2Y_1 $ & $\cdot$  & $\cdot$ & $\cdot$ & 1 & $\cdot$ & 2 & $\cdot$ & 3 & $\cdot$ & 4 & $\cdot$ & 5 & $\cdot$ & 6 & $\cdot$ & 7  \\
	
	&&&&&&&&&&&&&&&&&  \\  \hline
\end{tabular} 
\end{adjustbox}
\end{table}

\medskip

Our first computation is a classical result due to Steinberg (see \cite[Corollary 14.16]{STEIN}).

\begin{thm}[\bf Number of $\Fr_q$-stable maximal tori]  \label{ThmNumMaxTori} The number of $\Fr_q$-stable maximal tori of $\Sp_{2n}( \overline{\F_q})$ and  of $\SO_{2n+1}(\overline{\F_q})$ is $q^{2n^2}$.
\end{thm}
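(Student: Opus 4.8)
The plan is to specialize Lehrer's formula (Theorem \ref{LEHFOR}, Formula (\ref{COUNT2})) to the trivial character $\chi = 1$, which is the restriction of the constant character polynomial $P = 1$. For $\chi = 1$ the left-hand side of (\ref{COUNT2}) is simply $\sum_{T \in \mathcal{T}^{\Fr_q}} 1 = |\mathcal{T}^{\Fr_q}|$, exactly the quantity we want to compute, so the whole proof reduces to evaluating the right-hand side $q^{2n^2}\sum_{d \geq 0} q^{-d}\langle 1, R_n^d\rangle_{B_n}$. Here $\langle 1, R_n^d\rangle_{B_n} = \dim_{\C}(R_n^d)^{B_n}$ is the multiplicity of the trivial $B_n$-representation in the $d^{th}$ graded piece of the coinvariant algebra, so everything comes down to the claim that $(R_n^*)^{B_n} = \C$, concentrated in graded degree $0$.

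To prove that claim I would argue directly. Write $R_n = \C[x_1, \ldots, x_n]/I_n$, where $I_n$ is generated by the $B_n$-invariant polynomials of positive degree. First, the composite $\C[x_1,\ldots,x_n]^{B_n} \hookrightarrow \C[x_1,\ldots,x_n] \twoheadrightarrow R_n$ annihilates every positive-degree invariant, so its image is precisely the degree-zero line $\C$. Second, taking $B_n$-invariants is an exact functor in characteristic zero (it is split off the identity by averaging over $B_n$), so the quotient map induces a surjection $\C[x_1,\ldots,x_n]^{B_n} \twoheadrightarrow (R_n)^{B_n}$. Comparing these two facts forces $(R_n)^{B_n} = \C$ in degree $0$; hence $\langle 1, R_n^0\rangle_{B_n} = 1$ and $\langle 1, R_n^d\rangle_{B_n} = 0$ for every $d \geq 1$, and substituting into (\ref{COUNT2}) gives $|\mathcal{T}^{\Fr_q}| = q^{2n^2}$ exactly.

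An alternative route, more in line with the rest of Section \ref{SectionConcreteStatistics}, is to read off $\langle 1, R_n^d\rangle_{B_n}$ from Stembridge's formula (Theorem \ref{ThmStembridge}): the trivial representation is $V_{((n),\varnothing)}$, and its unique standard double tableau is the single-row filling by $1, \ldots, n$, which has empty flag descent set and $|\y^-| = 0$, hence flag major index $0$. This recovers the first row of Table \ref{TableStableMultiplicities}, and the computation proceeds as before. Either way the argument is very short; the only point that deserves a careful word is the standard fact that the $B_n$-invariants of the coinvariant algebra of a reflection group reduce to the scalars, which as above follows immediately from exactness of $(-)^{B_n}$ in characteristic zero together with the definition of $I_n$. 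I do not anticipate any genuine obstacle.
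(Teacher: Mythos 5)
Your proposal is correct. Both you and the paper begin identically, by specializing Lehrer's formula (Theorem \ref{LEHFOR}) to the trivial class function $\chi = 1$, so everything reduces to showing that the trivial representation occurs in $R_n^*$ only in graded degree $0$, with multiplicity one. Your secondary route --- reading this off from Stembridge's formula via the unique standard double tableau of shape $\bigl((n),\varnothing\bigr)$, which has empty flag descent set and flag major index $0$ --- is exactly the paper's proof. Your primary route is genuinely different and more elementary: you compute $(R_n)^{B_n}$ directly from the definition of the coinvariant algebra, using that $(-)^{B_n}$ is exact in characteristic zero (so $\C[x_1,\ldots,x_n]^{B_n}$ surjects onto $(R_n)^{B_n}$) and that every positive-degree invariant dies in the quotient by $I_n$. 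That argument is correct and self-contained; it avoids invoking Stembridge's theorem entirely and works verbatim for the coinvariant algebra of any finite reflection group. What the paper's choice buys instead is uniformity: the same tableau-counting template via Theorem \ref{ThmStembridge} is reused for all the subsequent, less trivial multiplicity computations in Section \ref{SectionConcreteStatistics}, so treating the trivial representation the same way is a natural warm-up rather than a necessity.
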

\begin{proof}
We can compute the number of $\Fr_q$-stable maximal tori using Theorem \ref{LEHFOR} by taking $\chi$ to be the trivial class function on $B_n$. The sequence of trivial representations are given by the character polynomial $\chi=1$,  and are encoded by the double partition $\y= \left( \overbrace{ \Y{3} \cdots \Y{1} }^{n} \; , \;	\varnothing
\; \right).$ The single double standard tableau  of shape $\y$ is $$ T= \left( 
\begin{ytableau}
1 & 2 & 3 &  \none & \none[\cdots] &  \none
& n \end{ytableau}
\; , \; \varnothing
\right)$$
which has descent set $D(T)= \varnothing$ and flag major index $f(T) =0$. Thus by Theorem \ref{ThmStembridge} the trivial representation appears with multiplicity one in degree $0$ and does not occur in positive degree. 
\end{proof} 

\begin{rem}
Observe that the normalized formula
$$\frac{ \sum_{T \in \mathcal{T}_n^{\Fr_q}} P(T)}{ q^{2n^2}}=\frac{ \sum_{T \in \mathcal{T}_n^{\Fr_q}} P(T)}{ |\mathcal{T}_n^{\Fr_q}|}$$
corresponds to the average of the statistic $P(T)$ over all maximal tori $T$.  Using Theorem \ref{ASYM_TORI} we obtain asymptotics of these expected values.
\end{rem}

\begin{prop}[\bf Expected number of $1$-dimensional $\Fr_q$-stable subtori] \label{Av1DTori} The expected number of $1$-dimensional $\Fr_q$-stable subtori of a random $\Fr_q$-stable maximal torus in $\Sp_{2n}(\overline{\F_q})$ or in $\SO_{2n+1}(\overline{\F_q})$ equals $$1+\frac{1}{q^2}+\frac{1}{q^4}+\cdots + \frac{1}{q^{2n-2}} = \frac{\left( q^2 - \frac{1}{q^{2(n-1)}}\right)}{(q^2-1)} \quad \xrightarrow{n \to \infty} \quad \frac{q^2}{q^2-1}.$$
\end{prop}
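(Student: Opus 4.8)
Counting $1$-dimensional $\Fr_q$-stable subtori of $T$ means evaluating the statistic $X_1(T)+Y_1(T)$, since every $1$-dimensional $\Fr_q$-stable subtorus is automatically irreducible over $\F_q$ and either splits over $\F_{q}$ (counted by $X_1$) or does not (counted by $Y_1$). So the plan is to apply Theorem \ref{ASYM_TORI} with the character polynomial $P = X_1 + Y_1$, which gives
$$ \lim_{n \to \infty} q^{-2n^2} \sum_{T \in \mathcal{T}_n^{\Fr_q}} (X_1(T)+Y_1(T)) \;=\; \sum_{d=0}^{\infty} \frac{\lim_{m \to \infty} \langle (X_1+Y_1)_m, R^d_m \rangle_{B_m}}{q^d}. $$
The work is then to compute the stable inner products $\lim_{m\to\infty}\langle (X_1+Y_1)_m, R^d_m\rangle_{B_m}$ for each $d$.

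To do this I would expand $X_1 + Y_1$ in the basis of character polynomials attached to irreducible $B_n$--representations. By \cite[Theorem 4.11]{FIW2} (the formula referenced just before Table \ref{TableStableMultiplicities}), $X_1 - Y_1$ is the stable character polynomial of $V_{((n-1),(1))}$ and $X_1 + Y_1 - 1$ is the stable character polynomial of $V_{((n-1,1),\varnothing)}$; adding these and the constant polynomial $1$ (the character of the trivial representation $V_{((n),\varnothing)}$) gives $X_1 + Y_1 = \mathbf{1} + (X_1+Y_1-1) + (X_1-Y_1) - \mathbf{1} + \cdots$; more carefully, $X_1+Y_1 = 1 + [V_{((n-1,1),\varnothing)}\text{ polynomial}] + [V_{((n-1),(1))}\text{ polynomial}] - 1 + 1$, so that for $m$ large $\langle (X_1+Y_1)_m, R^d_m\rangle_{B_m}$ equals the sum of stable multiplicities of $V_{((n),\varnothing)}$, $V_{((n-1,1),\varnothing)}$, and $V_{((n-1),(1))}$ in $R^d_m$. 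Now invoke Theorem \ref{ThmStembridge} to read off these multiplicities: from the $d=0$ through $d=15$ rows of Table \ref{TableStableMultiplicities}, the trivial representation contributes $1$ in degree $0$; $V_{((n-1),(1))}$ contributes $1$ in every odd degree; and $V_{((n-1,1),\varnothing)}$ contributes $1$ in every even degree $\geq 2$. One should check the pattern directly by counting standard double tableaux of these three shapes with prescribed flag major index — for the one-row shapes and the hook shapes this is a short combinatorial argument. The upshot is that $\lim_{m\to\infty}\langle(X_1+Y_1)_m,R^d_m\rangle_{B_m}$ equals $1$ if $d$ is even and $0$ if $d$ is odd.

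Substituting into the series gives
$$ \sum_{d=0}^{\infty} \frac{\lim_{m \to \infty} \langle (X_1+Y_1)_m, R^d_m \rangle_{B_m}}{q^d} \;=\; \sum_{k=0}^{\infty} \frac{1}{q^{2k}} \;=\; \frac{q^2}{q^2-1}. $$
For the finite-$n$ formula, I would note that $R^d_n$ vanishes for $d$ exceeding the top degree $n^2$ of the type B/C coinvariant algebra (indeed the relevant multiplicities stabilize and are eventually cut off), and more precisely that $V_{((n-1),(1))}$ occurs in $R^d_n$ exactly for odd $d$ with $1 \leq d \leq 2n-1$ and the trivial/hook representations contribute as above, so by Theorem \ref{LEHFOR} the exact statistic is the truncated geometric series $1 + q^{-2} + q^{-4} + \cdots + q^{-(2n-2)} = (q^2 - q^{-2(n-1)})/(q^2-1)$, whose limit is $q^2/(q^2-1)$.

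The main obstacle is the bookkeeping in the second paragraph: correctly expressing $X_1+Y_1$ in terms of stable irreducible characters (getting the signs and the constant term right) and then verifying via Theorem \ref{ThmStembridge} that the flag-major-index generating functions of the three relevant tableau shapes produce exactly the claimed multiplicity pattern — in particular confirming that no other irreducible $B_n$--representation contributes to $\langle (X_1+Y_1)_m, R^d_m\rangle$. Everything else is routine summation of a geometric series.
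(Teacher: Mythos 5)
Your overall strategy is exactly the paper's: apply Lehrer's formula (and Theorem \ref{ASYM_TORI}) to $P = X_1+Y_1$, decompose $P$ into stable irreducible characters, and read off multiplicities in $R^d_n$ from Theorem \ref{ThmStembridge}. However, your decomposition step contains a genuine error. You write
$$X_1+Y_1 \;=\; 1 + (X_1+Y_1-1) + (X_1-Y_1) - 1 + 1,$$
but the right-hand side equals $2X_1$, not $X_1+Y_1$; the correct and much simpler identity is $X_1+Y_1 = 1 + (X_1+Y_1-1)$, so only the trivial representation $V_{((n),\varnothing)}$ and $V_{((n-1,1),\varnothing)}$ contribute, and $V_{((n-1),(1))}$ should not appear at all. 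If you actually followed the decomposition you wrote down --- summing the stable multiplicities of all three representations, with $V_{((n-1),(1))}$ contributing $1$ in each odd degree $1,3,\ldots,2n-1$ --- you would get $\langle P_n, R^d_n\rangle_{B_n} = 1$ for \emph{every} $0 \le d \le 2n-1$ and hence the answer $q/(q-1)$, contradicting both the claimed multiplicity pattern (``$1$ if $d$ is even, $0$ if $d$ is odd'') and the final formula. Your stated conclusion is correct, but it does not follow from the decomposition as written; the signs and constant terms you flagged as ``the main obstacle'' are precisely where the argument breaks.

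A secondary, smaller gap: you defer the multiplicity computation to Table \ref{TableStableMultiplicities} and to ``a short combinatorial argument,'' whereas the substance of the paper's proof is that argument --- enumerating the $n-1$ standard double tableaux of shape $\bigl((n-1,1),\varnothing\bigr)$, computing $D(T_i)=\{i-1\}$ and $f(T_i)=2i-2$ for $i=2,\ldots,n$, and concluding that $V_{((n-1,1),\varnothing)}$ occurs once in each even degree $2,4,\ldots,2n-2$ (together with the trivial representation in degree $0$ from Theorem \ref{ThmNumMaxTori}). That count is also what produces the exact finite-$n$ truncation $1+q^{-2}+\cdots+q^{-(2n-2)}$; appealing to vanishing above the top degree $n^2$ of the coinvariant algebra is not sharp enough to recover the cutoff at $d=2n-2$.
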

\begin{proof}
To count the total number of $1$-dimensional $\Fr_q$-stable subtori for all maximal tori in $\mathcal{T}^{\Fr_q}$, we apply Formula  (\ref{COUNT2}) to the character polynomial $P=X_1+Y_1$. The goal is to show  that $\big\langle P, R_n^d\big\rangle_{B_n}= 1$ for $d=0, 2, 4,\ldots,2n-2$ and $0$ otherwise.

The pullback of the  $(n-1)$--dimensional standard $S_n$--representation to $B_n$ has character polynomial  $\chi = X_1 + Y_1-1$ and corresponds to the double partition $$\y= \left( \overbrace{ \Y{3,1} \substack{\;\; \cdots \;\; \Y{1} \\ \;} }^{n-1} \; , \;	\varnothing \; \right) .$$ There are $(n-1)$ possible double standard tableaux of shape $\y$, each determined by the letter $i$ in the second row. Let $$T_i= \left( \quad
\begin{ytableau}
1 & 2 &  \none[\cdots] & {\scriptstyle i-1 } & {\scriptstyle i+1 } & \none[\cdots] 
& n \\ i  \end{ytableau}
\quad , \quad \varnothing \quad 
\right) \qquad \text{for $i=2, \ldots, n$.} $$
\noindent Note that we require $i>1$ for $T_i$ to form a valid standard tableau. Then  $D(T_i)=\{i-1\}$ for $i=2, \ldots, n$.  The flag major indices are $f(T_i)=2i-2$ for $i=2, \ldots, n$, and there is one copy of $V_{(n-1, 1), \varnothing}$ in each even degree $2, 4, \ldots, 2n-2$.  

The character polynomial $(X_1+Y_1)$  is the sum $(X_1+Y_1) = (X_1+Y_1-1) + 1$, so combining this result for multiplicities of the representation $V_{( (n-1, 1), \varnothing )}$ and the result for $V_{((n), \varnothing )}$ of Theorem \ref{ThmNumMaxTori} gives the desired formula. 

\end{proof}

\begin{prop}[\bf  Expected number of split $1$-dimensional $\Fr_q$-stable subtori]
The expected number of split $1$-dimensional $\Fr_q$-stable subtori of of a random $\Fr_q$-stable maximal torus in $\Sp_{2n}(\overline{\F_q})$ or in $\SO_{2n+1}(\overline{\F_q})$ equals $$\frac{1}{2}\left(1+\frac{1}{q}+\frac{1}{q^2}+\frac{1}{q^3}+\cdots  +\frac{1}{q^{2n-1}}\right)  = \frac{\left(q - \frac{1}{q^{2n-1}} \right)}{2(q-1)}\quad \xrightarrow{n \to \infty} \quad \frac{q}{2(q-1)}.$$ 
\end{prop}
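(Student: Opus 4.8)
The plan is to mirror the proof of Proposition \ref{Av1DTori} almost verbatim, but now tracking the "split" subtori only, which corresponds to the character polynomial $P = X_1$ rather than $P = X_1 + Y_1$. By Theorem \ref{LEHFOR} (Lehrer's formula), the total count of split $1$-dimensional $\Fr_q$-stable subtori summed over all $T \in \mathcal{T}_n^{\Fr_q}$ equals $q^{2n^2}\sum_{d \geq 0} q^{-d} \langle X_1, R^d_n \rangle_{B_n}$, so after normalizing by $q^{2n^2} = |\mathcal{T}_n^{\Fr_q}|$ (Theorem \ref{ThmNumMaxTori}) the expected value is $\sum_{d \geq 0} q^{-d}\langle X_1, R^d_n\rangle_{B_n}$. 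The goal is therefore to compute the multiplicities $\langle X_1, R^d_n \rangle_{B_n}$ for each $d$.

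First I would express $X_1$ in the basis of characters of irreducible $B_n$--representations, or rather reduce it to the two pieces whose stable multiplicities were already worked out in Table \ref{TableStableMultiplicities} and in the proof of Proposition \ref{Av1DTori}. Since $X_1 + Y_1 - 1$ is the character polynomial of $V_{((n-1,1),\varnothing)}$ and $X_1 - Y_1$ is the character polynomial of $V_{((n-1),(1))}$, and $1$ is the character polynomial of the trivial representation $V_{((n),\varnothing)}$, we can solve: $X_1 = \tfrac{1}{2}\big((X_1+Y_1-1) + (X_1-Y_1) + 1\big)$, i.e. $X_1$ corresponds (stably) to $\tfrac12\big(V_{((n-1,1),\varnothing)} \oplus V_{((n-1),(1))} \oplus V_{((n),\varnothing)}\big)$. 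Hence $\langle X_1, R^d_n\rangle_{B_n} = \tfrac12\big(\langle R^d_n, V_{((n-1,1),\varnothing)}\rangle + \langle R^d_n, V_{((n-1),(1))}\rangle + \langle R^d_n, V_{((n),\varnothing)}\rangle\big)$.

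Next I would assemble the three multiplicity sequences from the Stembridge computation (Theorem \ref{ThmStembridge}). From Theorem \ref{ThmNumMaxTori}, $V_{((n),\varnothing)}$ contributes $1$ in degree $0$ and $0$ elsewhere. From the proof of Proposition \ref{Av1DTori}, $V_{((n-1,1),\varnothing)}$ contributes $1$ in each even degree $2,4,\ldots,2n-2$. It remains to compute the multiplicity of $V_{((n-1),(1))}$ in $R^d_n$: here the double standard tableaux of shape $\big((n-1),(1)\big)$ are indexed by the choice of which label $i \in \{1,\ldots,n\}$ goes in the single box of $\lambda^-$; drawing $\lambda^-$ above and to the right of $\lambda^+$, one checks the flag descent set and computes $f(T_i) = 2(\text{descent contribution}) + |\lambda^-| = 2(i-1) + 1 = 2i-1$ (I would verify the descent bookkeeping carefully — this is the one genuinely new tableau calculation). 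So $V_{((n-1),(1))}$ contributes $1$ in each odd degree $1, 3, \ldots, 2n-1$. Adding the three sequences and halving: $\langle X_1, R^d_n\rangle_{B_n} = \tfrac12$ for $d = 0, 1, 2, \ldots, 2n-1$ and $0$ for $d \geq 2n$. Summing the geometric-type series gives
$$\sum_{d=0}^{2n-1} \frac{1}{2 q^d} = \frac{1}{2}\cdot\frac{1 - q^{-2n}}{1 - q^{-1}} = \frac{q - q^{1-2n}}{2(q-1)} = \frac{\left(q - \frac{1}{q^{2n-1}}\right)}{2(q-1)},$$
which tends to $\frac{q}{2(q-1)}$ as $n \to \infty$; convergence of the limit is moreover guaranteed abstractly by Theorem \ref{ASYM_TORI}.

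**Main obstacle.** The only subtle point is getting the flag descent set and flag major index of the tableaux of shape $\big((n-1),(1)\big)$ exactly right — in particular the effect of the "above and to the right" placement convention on which pairs $(j, j+1)$ count as descents, and the $+|\lambda^-|$ correction term. Everything else is bookkeeping already carried out for the $X_1 + Y_1$ case, plus an elementary geometric series.
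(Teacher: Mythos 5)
Your proposal follows essentially the same route as the paper: both decompose $X_1$ as $\tfrac12\big((X_1+Y_1)+(X_1-Y_1)\big)$, reuse the $X_1+Y_1$ computation from the previous proposition, and compute the multiplicities of $V_{((n-1),(1))}$ via Stembridge's formula, arriving at one copy in each odd degree $1,3,\ldots,2n-1$ and hence the stated geometric sum. One bookkeeping correction on the point you flagged: with $\lambda^-$ placed \emph{above} and to the right of $\lambda^+$, the descent of $T_i$ occurs at $j=i$ (since $i+1$ then lies in the row below $i$), so $D(T_i)=\{i\}$ and $f(T_i)=2i+1$ for $i\le n-1$ while $D(T_n)=\varnothing$ and $f(T_n)=1$ --- not $f(T_i)=2i-1$ as you wrote --- but the resulting multiset of flag major indices is still $\{1,3,\ldots,2n-1\}$, so your conclusion is unaffected.
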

\begin{proof}
We wish to evaluate the character polynomial $P=X_1=\frac{1}{2}[(X_1+Y_1)+(X_1-Y_1)]$ in formula (\ref{COUNT2}).
The canonical $n$--dimensional representation of $B_n$ by signed permutation matrices has character polynomial  $\chi = X_1 - Y_1$ and corresponds to the double partition $$\y= \left( \overbrace{ \Y{3} \cdots \Y{1} }^{n-1} \; , \;	\Y{1} \; \right) .$$ There are $n$ possible double standard tableaux of shape $\y$, each determined by the letter $i$ in $\y^-$. Let $$T_i= \left( \quad
\begin{ytableau}
1 & 2 &  \none[\cdots] & {\scriptstyle i-1 } & {\scriptstyle i+1 } & \none[\cdots]
& n \end{ytableau}
\quad , \quad	\begin{ytableau}
i  \end{ytableau} \quad 
\right) \qquad \text{for $i=1, \ldots n$.} $$
Then $D(T_n)=\varnothing$, and  $D(T_i)=\{i\}$ for $i=1, \ldots, n-1$.  The flag major indices are

$$ f(T_i) = \left\{ \begin{array}{ll} 1, & i=n \\ 2i+1, & i=1, 2, \ldots, (n-1) \end{array} \right.$$
and then there is a single copy of $V_{((n-1), (1))}$ in each odd degree $1, 3, \ldots, 2n-1$. 

By combining this result with our computation for $X_1+Y_1$ in Proposition \ref{Av1DTori} above, we conclude that  $\big\langle 2X_1,R_n^d\big\rangle_{B_n}= 1$ for $d=0, 1,2,3, \ldots,2n-1$ and  $\big\langle 2X_1,R_n^d\big\rangle_{B_n}= 0$ otherwise. 
\end{proof}

\begin{cor}[\bf Expected number of eigenvectors in $\F_q^{2n}$]
The expected number of simultaneous eigenvectors in $\F_q^{2n}$ of a random $\Fr_q$-stable maximal torus in $\Sp_{2n}(\overline{\F_q})$ equals $$1+\frac{1}{q}+\frac{1}{q^2}+\frac{1}{q^3}+ \cdots  +\frac{1}{q^{2n-1}}$$
\end{cor}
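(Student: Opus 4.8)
The plan is to relate the count of simultaneous eigenvectors to the statistic $X_1$ computed in the preceding proposition. Each simultaneous eigenvector of a torus $T$ spans a one-dimensional $\Fr_q$-stable line, and conversely each one-dimensional $\Fr_q$-stable subtorus $T_r \subseteq T$ (with $r=1$) that \emph{splits over $\F_q$} corresponds exactly to a simultaneous eigenline defined over $\F_q$; the nonzero vectors on such a line form a $(q-1)$-element set of eigenvectors in $\F_q^{2n}$. So first I would make precise the dictionary: for $T \in \mathcal{T}_n^{\Fr_q}$, the number of simultaneous eigenvectors in $\F_q^{2n}$ equals $(q-1)$ times the number of split $1$-dimensional $\Fr_q$-stable subtori of $T$, which is $(q-1) X_1(T)$. (Here one uses that the simultaneous eigenspaces of $T$ acting on $\overline{\F_q}^{2n}$ are exactly the lines $L_i, \overline{L_i}$ of Section \ref{SubsectionToriPolys}, and such a line is defined over $\F_q$ precisely when $\Fr_q$ fixes it, i.e. when the corresponding $1$-cycle of $w_T$ is positive; each $\F_q$-rational line contributes $q-1$ nonzero vectors.)

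Given this, the expected number of simultaneous eigenvectors over a random $T \in \mathcal{T}_n^{\Fr_q}$ is
$$ q^{-2n^2} \sum_{T \in \mathcal{T}_n^{\Fr_q}} (q-1)X_1(T) \; = \; (q-1)\, q^{-2n^2} \sum_{T \in \mathcal{T}_n^{\Fr_q}} X_1(T). $$
By the previous proposition (applied with $P = X_1$), the inner average equals $\tfrac12\big(1 + q^{-1} + \cdots + q^{-(2n-1)}\big)$, so multiplying by $(q-1)$ and using $(q-1)\cdot\tfrac12(1+q^{-1}+\cdots+q^{-(2n-1)}) = \tfrac{q-1}{2}\cdot\tfrac{1-q^{-2n}}{1-q^{-1}}\cdot\ldots$ — more simply, $2(q-1)\cdot\tfrac12(1+q^{-1}+\cdots+q^{-(2n-1)})$ telescopes. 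I would just observe directly that $(q-1)(1+q^{-1}+\cdots+q^{-(2n-1)}) = $ is not what we want; rather we want $(q-1)$ times \emph{half} that sum. Let me restate cleanly: the expected number of eigenvectors is $(q-1) \cdot \frac{1}{2}\left(1 + \frac1q + \cdots + \frac{1}{q^{2n-1}}\right)$, and since $(q-1)\left(1 + \frac1q + \cdots + \frac{1}{q^{2n-1}}\right) = 2\left(\text{...}\right)$ does not simplify to the claimed answer, I should double-check the eigenvector-counting normalization: in $\Sp_{2n}$, the standard torus $T_0$ has $2n$ eigenlines but a \emph{split} $1$-dimensional subtorus of $T_0$ corresponds to a \emph{pair} $\{L_i, \overline{L_i}\}$; however each such pair contributes two eigenlines, hence $2(q-1)$ eigenvectors. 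So the count is $2(q-1)X_1(T)$, and $2(q-1)\cdot\frac12\big(1+q^{-1}+\cdots+q^{-(2n-1)}\big) = (q-1)\big(1+q^{-1}+\cdots+q^{-(2n-1)}\big)$. This still is not $1 + q^{-1} + \cdots + q^{-(2n-1)}$ unless $q=2$.

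The resolution, and the step I expect to be the main subtlety, is correctly counting eigenvectors: a split $1$-dimensional subtorus $T_1 \subseteq T$ irreducible over $\F_q$ acts on $\F_q^{2n}$ with its eigenlines $L_i, \overline{L_i}$ both defined over $\F_q$, contributing two $\F_q$-rational eigenlines, hence $2(q-1)$ nonzero eigenvectors; but the set of \emph{simultaneous} eigenvectors of the full maximal torus $T$ is the union over all such one-dimensional pieces, giving $\sum_i (\text{eigenvectors from the } i\text{-th piece})$. Since the total number of one-dimensional split pieces is $X_1(T)$ and each contributes $2$ lines, there are $2X_1(T)$ simultaneous eigenlines over $\F_q$, hence $2(q-1)X_1(T)$ nonzero simultaneous eigenvectors — \emph{but} one should include the zero vector or not, and one should check whether the intended statistic counts eigenlines or eigenvectors or eigen-pairs. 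Given the stated answer $1 + q^{-1} + \cdots + q^{-(2n-1)}$, the correct interpretation must make the expected value exactly $(q-1)\cdot 2 \cdot \frac{1}{2(q-1)}\big(\ldots\big)$; concretely the statistic equals $2X_1(T) \cdot 1$ counting \emph{lines}, with expected value $2 \cdot \frac12(1 + q^{-1}+\cdots+q^{-(2n-1)}) = 1 + q^{-1} + \cdots + q^{-(2n-1)}$. So I would phrase the corollary's ``eigenvectors in $\F_q^{2n}$'' as counting the simultaneous eigenlines (equivalently, one-dimensional eigenspaces), observe this statistic is $2X_1(T)$, and conclude by doubling the formula of the previous proposition. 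The whole proof is then two lines; the only real work is pinning down the combinatorial identification of the eigenvector/eigenline count with $2X_1(T)$ via the line-permutation description of $w_T$ in Section \ref{SubsectionToriPolys}.
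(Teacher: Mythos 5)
Your proposal is correct and, after the self-correction, lands on exactly the paper's argument: the statistic is $2X_1(T)$ (the positive fixed points of $w_T$ correspond to the $\F_q$-rational members of the distinguished eigenvector pairs $v_i,\overline{v}_i$, i.e.\ eigenlines rather than all $(q-1)$ scalar multiples), and one doubles the formula of the preceding proposition. The normalization worry you work through is legitimate — the paper's own proof resolves it the same way, by counting "the eigenvectors $v_i$" (equivalently the simultaneous eigenlines fixed by $\Fr_q$) rather than all nonzero vectors on those lines.
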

\begin{proof}
The fixed points of $w_T$ correspond to eigenvectors $v_i$ of $T$ in $\F_q^{2n}$. Since the eigenvectors come in inverse pairs ($v_i/\overline{v}_i$) for $\Fr_q$-stable maximal tori in $\Sp(\overline{\F_q})$,  we apply Formula (\ref{COUNT2}) to the character polynomial $P=2X_1$. The result follows from the above computation.
\end{proof}


\begin{prop}[\bf Reducible v.s. Irreducible $\Fr_q$-stable $2$-dimensional subtori] Given a $\Fr_q$-stable torus $T$ in $\Sp_{2n}(\overline{\F_q})$ or in $\SO_{2n+1}(\overline{\F_q})$, let $\mathcal{R}_n(T)$ denote the number of reducible $2$-dimensional $\Fr_q$-stable subtori of $T$ and  $\mathcal{I}_n(T)$ denote the number of irreducible $2$-dimensional $\Fr_q$-stable subtori of $T$. Then the expected value of the function $\mathcal{R}_n-\mathcal{I}_n$ over all $\Fr_q$-stable maximal tori of $\Sp_{2n}(\overline{\F_q})$ or $\SO_{2n+1}(\overline{\F_q})$ is given by
$$ \frac{\left( q^4 - \frac{1}{q^{2n}} \right) \left( 1 - \frac{1}{q^{2(n-1)}} \right) }{\left(q^2-1 \right)\left(q^4 -1\right)} $$
and converges to the sum
 $$\frac{1}{q^{2}}+\frac{1}{q^{4}}+\frac{2}{q^{6}}+\frac{2}{q^{8}}+\frac{3}{q^{10}}+\frac{3}{q^{12}}+\ldots+\frac{ \lfloor\frac{2d-2}{4}\rfloor}{q^{2d-4}} +\ldots \quad = \quad \frac{q^4}{(q^2-1)(q^4-1)}$$
as $n$ tends to infinity.
 \end{prop}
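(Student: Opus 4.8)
The plan is to count reducible versus irreducible $\Fr_q$-stable $2$-dimensional subtori of a fixed maximal torus $T$ in terms of the combinatorics of the signed cycle type $w_T \in B_n$, thereby expressing $\mathcal{R}_n - \mathcal{I}_n$ as a hyperoctahedral character polynomial, and then invoke Theorem \ref{LEHFOR} together with Stembridge's formula (Theorem \ref{ThmStembridge}) to compute the resulting series. First I would observe that a $2$-dimensional $\Fr_q$-stable subtorus of $T$ is $\Fr_q$-equivariantly a union of lines in $\mathbf{L}_T$ closed under $w_T$, so the $2$-dimensional $\Fr_q$-stable subtori correspond either to (i) an unordered pair of $1$-dimensional $\Fr_q$-stable subtori, accounting for the reducible ones, or (ii) a single positive or negative $2$-cycle of $w_T$, accounting for the irreducible ones. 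Since the number of $1$-dimensional $\Fr_q$-stable subtori is $X_1(w_T) + Y_1(w_T)$ (as recorded in Section \ref{SubsectionToriPolys}), this gives $\mathcal{R}_n(T) = \binom{X_1(w_T)+Y_1(w_T)}{2}$ and $\mathcal{I}_n(T) = X_2(w_T) + Y_2(w_T)$, so that as a class function on $B_n$,
$$ \mathcal{R}_n - \mathcal{I}_n = \binom{X_1 + Y_1}{2} - (X_2 + Y_2). $$
(This matches the third row of Table \ref{SPECIFIC}.) I would verify that the reducible count indeed coincides with choosing two distinct $1$-dimensional subtori — in particular that two distinct $\Fr_q$-stable lines always span an $\Fr_q$-stable $2$-dimensional subtorus, and that no $2$-dimensional subtorus is counted twice — which follows from the fact that the $\Fr_q$-action on $\mathbf{L}_T$ factors through $w_T$ and that $T_0$ is a product of the $1$-dimensional coordinate subtori.

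Next, by Theorem \ref{LEHFOR} applied with $\chi = P := \binom{X_1+Y_1}{2} - (X_2+Y_2)$, the normalized expected value is
$$ q^{-2n^2} \sum_{T \in \mathcal{T}_n^{\Fr_q}} P(T) = \sum_{d \geq 0} q^{-d} \langle P_n, R^d_n \rangle_{B_n}, $$
so it remains to compute the inner products $\langle P_n, R^d_n \rangle_{B_n}$, or equivalently to expand $P$ in the basis of irreducible characters and read off multiplicities from Stembridge's formula. Using the standard branching/induction identities for $B_n$-representations (or the stable-multiplicity dictionary of \cite[Theorem 4.11]{FIW2}, as summarized in Table \ref{TableStableMultiplicities}), I would express $P_n$ as a $\Z$-linear combination of the characters of $V_{((n),\varnothing)}$, $V_{((n-1),(1))}$, $V_{((n-1,1),\varnothing)}$, $V_{((n-2),(2))}$, $V_{((n-2),(1,1))}$, $V_{((n-2,1,1),\varnothing)}$, and $V_{((n-2,1),(1))}$. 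Comparing with the character polynomials listed in Table \ref{TableStableMultiplicities} should pin down the coefficients: $\binom{X_1+Y_1}{2} - (X_2+Y_2)$ should decompose so that the contributions of $V_{((n-2,1),(1))}$ and $V_{((n-1),(1))}$-type (odd-degree) pieces cancel, leaving only the even-degree contributions from $V_{((n),\varnothing)}$, $V_{((n-1,1),\varnothing)}$, $V_{((n-2),(1,1))}$, and $V_{((n-2,1,1),\varnothing)}$ (plus possibly $V_{((n-2),(2))}$ with a compensating sign). Then, applying Theorem \ref{ThmStembridge} to each relevant double partition — counting standard double tableaux by flag major index, exactly as in the proofs of Theorem \ref{ThmNumMaxTori} and Proposition \ref{Av1DTori} — yields $\langle P_n, R^d_n \rangle_{B_n}$ as an explicit (eventually periodic-linear) function of $d$, valid for $d \leq$ some bound depending on $n$, with the precise finite-$n$ truncation producing the closed form $\dfrac{\left(q^4 - q^{-2n}\right)\left(1 - q^{-2(n-1)}\right)}{(q^2-1)(q^4-1)}$.

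Finally, to obtain the limiting series I would let $n \to \infty$: by Theorem \ref{ASYM_TORI} (whose hypotheses apply since $P$ is a character polynomial and $R^*_n$ is a quotient of a polynomial ring, hence covered by Proposition \ref{CONVCOIN}), the limit equals $\sum_{d \geq 0} q^{-d} \lim_{m\to\infty}\langle P_m, R^d_m\rangle_{B_m}$, and the stable multiplicities are read off from the $n \to \infty$ columns of Table \ref{TableStableMultiplicities}. Summing the stable multiplicity of $V_{((n-2),(1,1))}$ in degree $2k$ (which grows like $\lfloor k/2 \rfloor$, matching the claimed coefficient $\lfloor \tfrac{2d-2}{4}\rfloor$ at $d' = 2d-4$ after reindexing) against the corrections from the low-weight representations gives the series $\tfrac{1}{q^2} + \tfrac{1}{q^4} + \tfrac{2}{q^6} + \tfrac{2}{q^8} + \cdots$, and recognizing this as $\big(\sum_{k\geq 1} q^{-2k}\big)\big(\sum_{j\geq 1} q^{-4j}\big) \cdot q^{?}$-type product — concretely $\dfrac{q^{-2}}{1-q^{-2}}\cdot\dfrac{1}{1-q^{-4}}$ after clearing denominators — collapses to $\dfrac{q^4}{(q^2-1)(q^4-1)}$. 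The main obstacle I anticipate is the bookkeeping in the second step: correctly decomposing $\binom{X_1+Y_1}{2}-(X_2+Y_2)$ into irreducible $B_n$-characters (the $\binom{X_1+Y_1}{2}$ term mixes several partitions of the form $(n-2)$-shapes and $(n-2,1,1)$, $(n-2,2)$, etc.) and then keeping the finite-$n$ boundary terms straight so that the telescoping of flag-major-index counts reproduces the exact rational function rather than merely its limit. Everything else — the geometric identification of subtori with cycles of $w_T$, the appeal to Lehrer's formula, and the final geometric-series summation — is routine given the tools already assembled in the paper.
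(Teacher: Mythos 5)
Your overall strategy coincides with the paper's: identify $\mathcal{R}_n-\mathcal{I}_n$ with the character polynomial $P=\binom{X_1+Y_1}{2}-(X_2+Y_2)$, feed it into Lehrer's formula (Theorem \ref{LEHFOR}), decompose $P$ into irreducible $B_n$--characters, and read off multiplicities from Stembridge's formula (Theorem \ref{ThmStembridge}). The genuine gap is in the decomposition step, which you flag as the main obstacle, leave unresolved, and --- where you do commit --- get wrong. The decomposition has only two terms. Since $P$ depends only on the combinations $X_r+Y_r$, it is the pullback under $B_n\to S_n$ of $\binom{X_1}{2}-X_2$, the character of $\bigwedge^2\C^n$ for the permutation representation of $S_n$; since $\bigwedge^2\C^n\cong V_{(n-1,1)}\oplus V_{(n-2,1,1)}$, pulling back gives
$$\binom{X_1+Y_1}{2}-(X_2+Y_2)\;=\;\chi_{V_{((n-2,1,1),\varnothing)}}+\chi_{V_{((n-1,1),\varnothing)}},$$
and no irreducible $V_{(\lambda,\mu)}$ with $\mu\neq\varnothing$ can occur, because a class function pulled back from $S_n$ is a $\Z$--combination of the characters $\chi_{V_{(\lambda,\varnothing)}}$ and is therefore orthogonal to every $V_{(\lambda,\mu)}$ with $\mu\neq\varnothing$. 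In particular your candidate list is too large, and your final paragraph attributes the growing multiplicities to $V_{((n-2),(1,1))}$ --- a representation with character $\binom{X_1}{2}+\binom{Y_1}{2}-X_1Y_1-X_2+Y_2$, which pairs to zero with $P$. The representation actually responsible is $V_{((n-2,1,1),\varnothing)}$ (note the transposed roles of the two Young diagrams), whose standard double tableaux $T_{i,j}$ have flag major index $2(i+j)-4$ and hence stable multiplicity $\lfloor\frac{d-2}{4}\rfloor$ in even degree $d$; adding the single copy of $V_{((n-1,1),\varnothing)}$ in each even degree $2,4,\ldots,2n-2$ from Proposition \ref{Av1DTori} yields the Gaussian-binomial closed form and the stated limit. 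Once the decomposition is corrected, the remaining steps of your outline (the geometric identification of $P$, Lehrer's formula, the tableau counts, and the final geometric-series summation) go through exactly as in the paper.
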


\begin{proof}
To count the difference in the number of reducible and irreducible $2$-dimensional $\Fr_q$-stable subtori of $T\in\mathcal{T}_n^{\Fr_q}$, we compute $$P(T)={ X_1(T) + Y_1(T) \choose 2} - \Big(X_2(T) + Y_2(T)\Big)$$  and employ  Formula (\ref{COUNT2}).  The irreducible $B_n$--representation $\bigwedge^2 V_{(n-1,1), \varnothing}$ has character $$\chi = { X_1 + Y_1 \choose 2} - (X_2 + Y_2) - (X_1 + Y_1) + 1$$ and associated double partition $$\y= \left( \overbrace{ \Y{3,1,1} \substack{\;\; \cdots \;\; \Y{1} \\ \; \\ \;}  }^{n-2} \; , \; \varnothing \; \right) .$$
Define 
$$T_{i,j}= \left( \quad \begin{ytableau}
1 & 2 &  \none[\cdots] & {\scriptstyle i-1 } & {\scriptstyle i+1 } & \none[\cdots] & {\scriptstyle j-1 } & {\scriptstyle j+1 } & \none[\cdots]
& n \\ i \\ j  \end{ytableau}
\quad , \varnothing \quad 
\right) \qquad \text{for $i < j, \quad i,j \in \{2, \ldots, n \}$.} $$ We have $D(T_{i,j}) = \{ i-1, j-1 \}$ for all $2 \leq i < j \leq n$, and so $f(T_{i,j}) = 2(i+j)-4$ for all  $2 \leq i < j \leq n$. Stably, it follows that the multiplicity is zero in degree $d$ if $d$ is odd, and for $d \geq 6$ even,  $f(T_{i,j})=d$ for the $\lfloor\frac{d-2}{4}\rfloor$ pairs  $(i,j) = (2, \frac{d}{2}), (3, \frac{d}{2}-1), \ldots, (\lfloor\frac{d+2}{4}\rfloor,  \frac{d}{2}-\lfloor\frac{d+2}{4}\rfloor +2)$. For a given $n \geq 2$, the expected value $ \displaystyle q^{-2n^2} \sum_{T \in \mathcal{T}_n^{\Fr_q}} \chi(T) $ of $\chi(T)$ is given in terms of the \emph{Gaussian binomial coefficient} $\displaystyle { n \choose 2}_{\frac1{q^2}} $ by the formula
\begin{align*} \; \sum_{d\geq 0} \frac{ \langle \chi, R^d_n \rangle_{B_n} }{q^d}= \frac1{q^6} { n \choose 2}_{\frac1{q^2}} := &  \frac{1}{q^6} \left( \frac{ \left(1-\frac1{q^{2n}}\right)\left(1-\frac{1}{q^{2(n-1)}}\right)}{\left(1-\frac1{q^2}\right)\left(1-\frac1{q^4}\right)}  \right) =   \frac{\left(1-\frac1{q^{2n}}\right)\left(1-\frac{1}{q^{2(n-1)}}\right)}{\left(q^2-1 \right)\left(q^4 -1\right)}   \end{align*}

As $n$ tends to infinity, the expected value converges to 
$$\frac{1}{q^6}+\frac{1}{q^{8}}+\frac{2}{q^{10}}+\frac{2}{q^{12}}+\frac{3}{q^{14}}+\frac{3}{q^{16}}+\ldots+\frac{ \lfloor\frac{2d-2}{4}\rfloor}{q^{2d}} +\ldots \quad = \quad \frac{1}{(q^2-1)(q^4-1)}$$

To obtain the result for the character polynomial
$${ X_1 + Y_1 \choose 2} - (X_2 + Y_2)  =\bigg[ { X_1 + Y_1 \choose 2} - (X_2 + Y_2) - (X_1 + Y_1) + 1 \bigg] + \bigg[ (X_1 + Y_1) - 1 \bigg]  $$
we combine this result with the result for irreducible representation $V_{(n-1,1), \varnothing}$ from Proposition \ref{Av1DTori}. We find that the desired formula is
\begin{align*} &\frac{\left(1-\frac1{q^{2n}}\right)\left(1-\frac{1}{q^{2(n-1)}}\right)}{\left(q^2-1 \right)\left(q^4 -1\right)}   + \frac{\left( q^2 - \frac{1}{q^{2(n-1)}}\right)}{(q^2-1)} -1 \\
&= \frac{\left( q^4 - \frac{1}{q^{2n}} \right) \left( 1 - \frac{1}{q^{2(n-1)}} \right) }{\left(q^2-1 \right)\left(q^4 -1\right)} \\
&  \xrightarrow{n \to \infty} \frac{q^4 }{\left(q^2-1 \right)\left(q^4 -1\right)} = \frac{1}{q^{2}}+\frac{1}{q^{4}}+\frac{2}{q^{6}}+\frac{2}{q^{8}}+\frac{3}{q^{10}}+\frac{3}{q^{12}}+\ldots
\end{align*}

\end{proof}

\begin{prop}[\bf Split  v.s. non-split  $\Fr_q$-stable irreducible $2$-dimensional subtori] The expected value of split minus non-split  $\Fr_q$-stable irreducible $2$-dimensional subtori over all $\Fr_q$-stable maximal tori of $\Sp_{2n}(\overline{\F_q})$ or $\SO_{2n+1}(\overline{\F_q})$ is given by
 $$   \frac{ q^2 \left(1-\frac1{q^{2n}}\right)\left(1-\frac{1}{q^{2(n-1)}}\right)}{2\left(q^4-1\right)} 
 \qquad \xrightarrow{n \to \infty} \qquad \frac{ q^2}{2\left(q^4-1\right)}.  $$
\end{prop}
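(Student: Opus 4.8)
The plan is to follow the template of the preceding propositions: translate the statistic into a character polynomial, apply Lehrer's formula, decompose the polynomial into irreducible characters of $B_n$, and evaluate the resulting multiplicities with Stembridge's tableau formula (Theorem~\ref{ThmStembridge}). By the interpretation of the signed cycle--counting functions in Section~\ref{SubsectionToriPolys}, the number of split minus non-split $\Fr_q$-stable irreducible $2$-dimensional subtori of $T\in\mathcal{T}_n^{\Fr_q}$ is exactly $(X_2-Y_2)(T)$, so Lehrer's formula \eqref{COUNT2} gives
$$q^{-2n^2}\sum_{T\in\mathcal{T}_n^{\Fr_q}}(X_2-Y_2)(T)\;=\;\sum_{d\ge 0}\frac{\langle X_2-Y_2,R_n^d\rangle_{B_n}}{q^d},$$
and it suffices to evaluate these inner products and sum. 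The key input (visible already in Table~\ref{TableStableMultiplicities}) is the identity $2(X_2-Y_2)=V_{(n-2),(2)}-V_{(n-2),(1,1)}$ of class functions on $B_n$. I would establish this uniformly for $n\ge 2$ as follows: the defining signed--permutation representation $\C^n\cong V_{(n-1),(1)}$ has character $X_1-Y_1$, while $\mathrm{Sym}^2(\C^n)\cong V_{(n),\varnothing}\oplus V_{(n-1,1),\varnothing}\oplus V_{(n-2),(2)}$ and $\bigwedge^2(\C^n)\cong V_{(n-2),(1,1)}$. Since $\chi_{\mathrm{Sym}^2 V}(\sigma)-\chi_{\bigwedge^2 V}(\sigma)=\chi_V(\sigma^2)$, and a short cycle--type computation gives $X_1(\sigma^2)=X_1(\sigma)+Y_1(\sigma)+2X_2(\sigma)$ and $Y_1(\sigma^2)=2Y_2(\sigma)$, subtracting the characters of $V_{(n),\varnothing}$ and $V_{(n-1,1),\varnothing}$ leaves the claimed identity. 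Consequently $\langle X_2-Y_2,R_n^d\rangle_{B_n}=\tfrac12\bigl(\langle R_n^d,V_{(n-2),(2)}\rangle_{B_n}-\langle R_n^d,V_{(n-2),(1,1)}\rangle_{B_n}\bigr)$.

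Next I would compute the two multiplicities via Theorem~\ref{ThmStembridge}. Because the first component $(n-2)$ is a single row, a double standard tableau of shape $((n-2),(2))$ or $((n-2),(1,1))$ is completely determined by the two--element subset $\{u<v\}\subseteq[n]$ filling the second component; everything else is forced to be increasing. Working out the flag descent set in the handful of cases---according to whether $v=u+1$ and whether $v=n$---and summing $q^{-f(T)}$, I obtain closed forms for the two fake--degree series, in which the double--sum contributions collapse to Gaussian binomial coefficients $\binom{n-2}{2}_{1/q^2}$ and $\binom{n-1}{2}_{1/q^2}$ via the identity $\sum_{1\le a<b\le m}w^{a+b}=w^3\binom{m}{2}_w$. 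Taking half the difference and simplifying the Gaussian binomials with the $q$-Pascal recursions, the geometric--series corrections and the binomial terms combine---after a string of cancellations---into
$$\sum_{d\ge 0}\frac{\langle X_2-Y_2,R_n^d\rangle_{B_n}}{q^d}\;=\;\frac{q^2\bigl(1-q^{-2n}\bigr)\bigl(1-q^{-2(n-1)}\bigr)}{2(q^4-1)}.$$
Letting $n\to\infty$ (the limit existing already by Theorem~\ref{ASYM_TORI}, since $q^{-2}<1$) kills the factors $q^{-2n}$ and $q^{-2(n-1)}$, leaving $\dfrac{q^2}{2(q^4-1)}$.

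I expect the last step to be the main obstacle. The tableau enumeration has several boundary cases for the flag descent set that must be tracked carefully, and---more substantially---the collapse of the difference of the two fake--degree polynomials into the clean product form is not transparent a priori; it rests on a cancellation that only emerges after expanding the Gaussian binomials. A possible streamlining is to evaluate $\sum_d q^{-d}\langle \mathrm{Sym}^2\C^n-\bigwedge^2\C^n,R_n^d\rangle_{B_n}$ directly as a difference of fake--degree series for the symmetric and exterior squares and subtract the series $\sum_d q^{-d}\langle X_1+Y_1,R_n^d\rangle_{B_n}=1+q^{-2}+\cdots+q^{-(2n-2)}$ already computed in Proposition~\ref{Av1DTori}; this reorganizes the same bookkeeping around $\mathrm{Sym}^2$ and $\bigwedge^2$ rather than around two separate irreducible representations, but does not avoid the fake--degree computation itself.
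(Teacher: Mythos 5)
Your proposal is correct and follows essentially the same route as the paper: express $X_2-Y_2$ as half the difference of the characters of $V_{((n-2),(2))}$ and $V_{((n-2),(1,1))}$, evaluate the two multiplicity series via Stembridge's tableau formula, and recognize them as $\tfrac{1}{q^2}\binom{n}{2}_{1/q^2}$ and $\tfrac{1}{q^4}\binom{n}{2}_{1/q^2}$ (your $q$-Pascal bookkeeping reproduces exactly these closed forms). The only cosmetic difference is that you derive the character identity from $\mathrm{Sym}^2$ and $\bigwedge^2$ of the defining representation via $\chi_{\mathrm{Sym}^2V}-\chi_{\wedge^2V}=\chi_V(\sigma^2)$, whereas the paper simply quotes the explicit character polynomials of the two irreducibles.
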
 

\begin{proof} We need to consider the character polynomial $X_2 - Y_2$ in Formula (\ref{COUNT2}). 

The irreducible $B_n$--representation $\Lambda^2 V_{(n-1), (1)}$ has character polynomial  $\displaystyle  \chi = { X_1 \choose 2} + { Y_1 \choose 2}  - X_1Y_1 -X_2+Y_2$ and corresponds to the double partition $$\y= \left( \overbrace{ \Y{3} \cdots \Y{1} }^{n-2} \; , \; \Y{1,1} \; \right) .$$ There are $\displaystyle { n \choose 2}$ possible double standard tableaux of shape $\y$. For $i < j$, with $i,j \in \{1, \ldots, n \}$, let $$T_{i,j}= \left( \quad
\begin{ytableau}
1 & 2 &  \none[\cdots] & {\scriptstyle i-1 } & {\scriptstyle i+1 } & \none[\cdots] & {\scriptstyle j-1 } & {\scriptstyle j+1 } & \none[\cdots]
& n  \end{ytableau}
\quad , \quad \begin{ytableau} i \\ j \end{ytableau} \quad 
\right)$$
Then 
\begin{align*}
&D(T_{i, n})=\{ i \} && f(T_{i, n}) = 2i+2 && \text{for} \quad i = 1, \ldots, n-1 \\
&D(T_{i, j}) = \{i, j\} && f(T_{i, j})=2i+2j+2 &&  \text{for} \quad  1 \leq i < j \leq n-1 
\end{align*}

Hence for even $d$ with $4 \leq d \leq 2n$, $f(T_{ \frac{d-2}{2}, n} ) = d$, and for even $d$ with $4 \leq d \leq 2n-2$, $f(T_{i,j}) = d$ for all $i<j<n$ such that  $i+j=\frac{d}{2}-1$. Stably there are $ \displaystyle \left\lfloor\frac{d}{4}\right\rfloor$ copies of $V_{(n-2), (1,1)}$ in each even degree $d \geq 4$ and zero copies in odd degree. For given $n \geq 2$, the expected value of the character polynomial statistic is
\begin{align*} \; \sum_{d\geq0} \frac{ \langle \chi, R^d_n \rangle_{B_n} }{q^d}= \frac1{q^4} { n \choose 2}_{\frac1{q^2}} := & \frac{ \frac1{q^4} \left(1-\frac1{q^{2n}}\right)\left(1-\frac{1}{q^{2(n-1)}}\right)}{\left(1-\frac1{q^2}\right)\left(1-\frac1{q^4}\right)} \\ 
& \xrightarrow{ n \to \infty} \frac{q^2}{\left(q^2-1\right)\left(q^4-1\right)} 
\end{align*}
The irreducible $B_n$--representation with character polynomial  $ \displaystyle \chi = { X_1 \choose 2} + { Y_1 \choose 2}  - X_1Y_1+X_2-Y_2$ and corresponds to the double partition $$\y= \left( \overbrace{ \Y{3} \cdots \Y{1} }^{n-2} \; , \; \Y{2} \; \right) .$$ There are $\displaystyle { n \choose 2}$ possible double standard tableaux of shape $\y$. For $i < j$, with $ i,j \in \{1, \ldots, n \}$, let $$T_{i,j}= \left( \quad
\begin{ytableau}
1 & 2 &  \none[\cdots] & {\scriptstyle i-1 } & {\scriptstyle i+1 } & \none[\cdots] & {\scriptstyle j-1 } & {\scriptstyle j+1 } & \none[\cdots]
& n  \end{ytableau}
\quad , \quad \begin{ytableau} i & j \end{ytableau} \quad 
\right) $$
Then 
\begin{align*}
&D(T_{n-1, n})=\varnothing && f(T_{n-1, n})=2 \\
&D(T_{i, n})=\{ i \} && f(T_{i, n}) = 2i+2 && \text{for} \quad i = 1, \ldots, n-2 \\
&D(T_{i-1, i}) = \{i\} && f(T_{i-1, i})=2i+2 &&  \text{for} \quad  i=2, \ldots, n-1 \\
&D(T_{i, j+1}) = \{i, j+1\} && f(T_{i, j+1})=2i+2j+4 &&  \text{for} \quad  1 \leq i < j \leq n-2 
\end{align*}
Stably, there this irreducible representation has multiplicity zero in odd degrees and multiplicity $ \displaystyle \left\lceil \frac{d}{4} \right\rceil$ in each even degree $d \geq 2$. For $n \geq 2$, the expected value $ \displaystyle q^{-2n^2} \sum_{T \in \mathcal{T}_n^{\Fr_q}} \chi(T) $ is
\begin{align*} \; \sum_{d\geq0}\ \frac{ \langle \chi, R^d_n \rangle_{B_n} }{q^d}= \frac1{q^2} { n \choose 2}_{\frac1{q^2}} := & \frac{ \frac1{q^2} \left(1-\frac1{q^{2n}}\right)\left(1-\frac{1}{q^{2(n-1)}}\right)}{\left(1-\frac1{q^2}\right)\left(1-\frac1{q^4}\right)}  =
\frac{ q^4 \left(1-\frac1{q^{2n}}\right)\left(1-\frac{1}{q^{2(n-1)}}\right)}{\left(q^2-1\right)\left(q^4 -1\right)} \\ 
& \xrightarrow{ n \to \infty} \frac{q^4}{\left(q^2-1\right)\left(q^4-1\right)} 
\end{align*}


Then for the character polynomial $$ P=(X_2 - Y_2) =  \frac12 \left[\left(  { X_1 \choose 2} + { Y_1 \choose 2}  - X_1Y_1 +X_2-Y_2 \right) -   \left( { X_1 \choose 2} + { Y_1 \choose 2}  - X_1Y_1 -X_2+Y_2  \right) \right]$$ 
the corresponding expected value $ \displaystyle q^{-2n^2} \sum_{T \in \mathcal{T}_n^{\Fr_q}} P(T) $ for $n \geq 2$ is given by 
\begin{align*}
& \frac12 \left[ \left( \frac{ \frac1{q^2} \left(1-\frac1{q^{2n}}\right)\left(1-\frac{1}{q^{2(n-1)}}\right)}{\left(1-\frac1{q^2}\right)\left(1-\frac1{q^4}\right)} \right) - \left( \frac{ \frac1{q^4} \left(1-\frac1{q^{2n}}\right)\left(1-\frac{1}{q^{2(n-1)}}\right)}{\left(1-\frac1{q^2}\right)\left(1-\frac1{q^4}\right)}  \right) \right] \\
& =  \frac{ q^2 \left(1-\frac1{q^{2n}}\right)\left(1-\frac{1}{q^{2(n-1)}}\right)}{2\left(q^4-1\right)} 
\end{align*} 
In the limit as $n$ tends to infinity, this converges to $\displaystyle  \frac{ q^2}{2\left(q^4-1\right)}.$
\end{proof}

{\small
\bibliographystyle{amsalpha}
\bibliography{referFI-MaxTori}

\noindent Rita Jim\'enez Rolland \\ 
 Instituto de Matem\'aticas, Universidad Nacional Aut\'onoma de M\'exico\\  
Oaxaca de Ju\'arez, Oaxaca,  M\'exico 68000\\
{\texttt{rita@im.unam.mx}}\\

\noindent  Jennifer C. H. Wilson \\
Stanford University,  Department of Mathematics\\
Sloan Hall, 450 Serra Mall, Building 380,  Stanford, CA 94305\\
{\texttt{jchw@stanford.edu}}
}

\end{document}